\numberwithin{equation}{section}
\newtheorem{theorem}[equation]{Theorem}
\newtheorem{proposition}[equation]{Proposition}
\newtheorem{lemma}[equation]{Lemma}
\newtheorem{corollary}[equation]{Corollary}
\theoremstyle{definition}
\newtheorem{rmk}[equation]{Remark}
\newenvironment{remark}[1][]{\begin{rmk}[#1] \pushQED{\qed}}{\popQED \end{rmk}}
\newtheorem{rmks}[equation]{Remarks}
\newtheorem{eg}[equation]{Example}
\newtheorem{defn}[equation]{Definition}
\newenvironment{definition}[1][]{\begin{defn}[#1]\pushQED{\qed}}{\popQED \end{defn}}
\newtheorem{ques}[equation]{Question}
\newtheorem{notn}[equation]{Notation}
\newcommand{\cB}{\mathcal{B}}
\newcommand{\cG}{\mathcal{G}}
\newcommand{\fG}{\mathfrak{G}}
\newcommand{\cK}{\mathcal{K}}
\newcommand{\cL}{\mathcal{L}}
\newcommand{\cM}{\mathcal{M}}
\newcommand{\cO}{\mathcal{O}}
\newcommand{\cQ}{\mathcal{Q}}
\newcommand{\fS}{\mathfrak{S}}
\newcommand{\ba}{\mathbf{a}}
\newcommand{\bb}{\mathbf{b}}
\newcommand{\bc}{\mathbf{c}}
\newcommand{\bd}{\mathbf{d}}
\newcommand{\be}{\mathbf{e}}
\newcommand{\fp}{\mathfrak{p}}
\newcommand{\bs}{\mathbf{s}}
\newcommand{\bt}{\mathbf{t}}
\newcommand{\bu}{\mathbf{u}}
\newcommand{\bv}{\mathbf{v}}
\newcommand{\bw}{\mathbf{w}}
\newcommand{\bx}{\mathbf{x}}
\newcommand{\bz}{\mathbf{z}}
\newcommand{\zD}{\ensuremath{\Delta}}
\newcommand{\zO}{\ensuremath{\Omega}}
\newcommand{\za}{\ensuremath{\alpha}}
\newcommand{\zb}{\ensuremath{\beta}}
\newcommand{\zg}{\ensuremath{\gamma}}
\newcommand{\zl}{\ensuremath{\lambda}}
\newcommand{\ZZ}{\mathbb{Z}}
\renewcommand{\phi}{\varphi}
\renewcommand{\tilde}[1]{\widetilde{#1}}
\newcommand{\comment}[1]{}
\newcommand{\setst}[2]{\{#1\ | \ #2 \}}
\def\Ddots{\mathinner{\mkern1mu\raise\p@
\vbox{\kern7\p@\hbox{.}}\mkern2mu
\raise4\p@\hbox{.}\mkern2mu\raise7\p@\hbox{.}\mkern1mu}}
\DeclareMathOperator{\codim}{codim}
\newcommand{\Hom}{\operatorname{Hom}}
\DeclareMathOperator{\rank}{rank}
\DeclareMathOperator{\Spec}{Spec}
\DeclareMathOperator{\Mat}{Mat}
\newcommand{\GL}{\mathbf{GL}}
\newcommand{\into}{\hookrightarrow}
\newcommand{\latticepath}[4]{ 
  \coordinate (L) at #2;
  \foreach \x/\y/\a in {#4} {
    \coordinate (L1) at ($ (L) + ( #1 * \x , #1 * \y ) $);
    \draw[color=\a, #3] (L) -- (L1);
    \coordinate (L) at (L1);
  }
}
\newcommand{\pipedream}[5]{ 
  \coordinate (P) at ($ #2 - (#1, 0 ) $);

  \coordinate (Q) at ($ (P) + ( 1.5 * #1, 0.5 * #1 ) $);
  \foreach \object in {#3} {
    \node at (Q) {\object};
    \coordinate (Q) at ($ (Q) + ( #1, 0 ) $);
  }

  \coordinate (Q) at ($ (P) + ( 0.5*#1, -0.5*#1 ) $);
  \foreach \object in {#4} {
    \node at (Q) {\object};
    \coordinate (Q) at ($ (Q) + ( 0, -#1 ) $);
  }

  \foreach \x/\y/\z/\a/\b in {#5} {
    \coordinate (P1) at ($ (P) + ( #1 * \y , -#1 * \x ) + ( 0      , #1 / 2 ) $);
    \coordinate (P2) at ($ (P) + ( #1 * \y , -#1 * \x ) + ( #1     , #1 / 2 ) $);
    \coordinate (P3) at ($ (P) + ( #1 * \y , -#1 * \x ) + ( #1 / 2 , #1     ) $);
    \coordinate (P4) at ($ (P) + ( #1 * \y , -#1 * \x ) + ( #1 / 2 , 0      ) $);
    \coordinate (P5) at ($ (P) + ( #1 * \y , -#1 * \x ) + ( #1 / 2 , #1 / 2 ) $);
    
    \ifnum \z = 0
      \draw[rounded corners=4, color=\a, thick] (P1) -- (P5) -- (P3);
      \draw[rounded corners=4, color=\b, thick] (P4) -- (P5) -- (P2);
    \else
      \draw[rounded corners=0.2, color=\a, thick] (P3) -- (P5) -- (P4);
      \draw[rounded corners=0.2, color=\b, thick] (P1) -- (P5) -- (P2);
    \fi
  }
}
\def\textcross{
  \begin{minipage}{13pt}
    \begin{tikzpicture}[scale=1,>=latex]
      \pipedream{0.4}{(0,0)}{$$}{$$}{0/0/1/black/black}
    \end{tikzpicture}
  \end{minipage}
}
\def\textturn{
  \begin{minipage}{13pt}
    \begin{tikzpicture}[scale=1]
      \pipedream{0.4}{(0,0)}{$$}{$$}{0/0/0/black/black}
    \end{tikzpicture}
  \end{minipage}
}
\renewcommand{\GL}{{GL}}
\newcommand{\tGL}{\tilde{\GL}}
\newcommand{\tO}{\widetilde{\zO}}
\newcommand{\mcell}{Y_\circ^{v_0}}
\newcommand{\tq}{\tilde{Q}}
\newcommand{\td}{\tilde{\bd}}
\newcommand{\bid}{\mathbf{1}}
\DeclareMathOperator{\Pipes}{\mathcal{P}}
\DeclareMathOperator{\init}{in}
\DeclareMathOperator{\rot}{rot}
\DeclareMathOperator{\row}{row}
\DeclareMathOperator{\col}{col}
\DeclareMathOperator{\minors}{minors}
\DeclareMathOperator{\RedPipes}{\mathcal{RP}}
\newcommand{\rep}{\mathtt{rep}}
\newcommand{\one}{\mathbf{1}}
\renewcommand{\AA}{\mathbb{A}}
\newcommand{\pipedreamalt}[6]{ 
  \coordinate (P) at ($ #2 - (#1, 0 ) $);

  \coordinate (Q) at ($ (P) + ( 1.5 * #1, 0.5 * #1 ) $);
  \foreach \object in {#3} {
    \node at (Q) {\object};
    \coordinate (Q) at ($ (Q) + ( #1, 0 ) $);
  }

  \coordinate (Q) at ($ (P) + ( 0.5*#1, -0.5*#1 ) $);
  \foreach \object in {#4} {
    \node at (Q) {\object};
    \coordinate (Q) at ($ (Q) + ( 0, -#1 ) $);
  }

  \foreach \x/\y/\z/\a/\b in {#5} {
    \coordinate (P1) at ($ (P) + ( #1 * \y , -#1 * \x ) + ( 0      , #1 / 2 ) $);
    \coordinate (P2) at ($ (P) + ( #1 * \y , -#1 * \x ) + ( #1     , #1 / 2 ) $);
    \coordinate (P3) at ($ (P) + ( #1 * \y , -#1 * \x ) + ( #1 / 2 , #1     ) $);
    \coordinate (P4) at ($ (P) + ( #1 * \y , -#1 * \x ) + ( #1 / 2 , 0      ) $);
    \coordinate (P5) at ($ (P) + ( #1 * \y , -#1 * \x ) + ( #1 / 2 , #1 / 2 ) $);
    
    \ifnum \z = 0
      \draw[rounded corners=4, color=\a, thick] (P1) -- (P5) -- (P3);
      \draw[rounded corners=4, color=\b, thick] (P4) -- (P5) -- (P2);
    \else \ifnum \z =1
      \draw[rounded corners=0.2, color=\a, thick] (P3) -- (P5) -- (P4);
      \draw[rounded corners=0.2, color=\b, thick] (P1) -- (P5) -- (P2);
    \else
      \draw[rounded corners=0.2, color=\b, thick] (P1) -- (P5) -- (P2);
      \draw[rounded corners=0.2, color=\a, thick] (P3) -- (P5) -- (P4);
    \fi \fi
  }
  {#6}
}
\begin{document}

\title[Three combinatorial formulas]{Three combinatorial formulas for \\type A quiver polynomials and K-polynomials}

\author{Ryan Kinser}
\author{Allen Knutson}
\author{Jenna Rajchgot}
\address{University of Iowa, Department of Mathematics, Iowa City, Iowa 52242, USA}
\email[Ryan Kinser]{ryan-kinser@uiowa.edu}
\address{Cornell University, Department of Mathematics, Ithaca, NY, USA}
\email[Allen Knutson]{allenk@math.cornell.edu}
\address{University of Saskatchewan, Department of Mathematics and Statistics, Saskatoon, SK, Canada}
\email[Jenna Rajchgot]{rajchgot@math.usask.ca}

\begin{abstract}
We provide combinatorial formulas for the multidegree and $K$-polynomial of an arbitrarily oriented type $A$ quiver locus. These formulas are generalizations of three of Knutson-Miller-Shimozono's formulas from the equioriented setting; in particular, we prove the $K$-theoretic component formula conjectured by Buch and Rim\'anyi.
\end{abstract}


\subjclass[2010]{14M12, 05E15, 14C17, 19E08}

\keywords{quiver locus; representation variety; orbit closure; K-polynomial; multidegree; degeneracy locus; pipe dream; lacing diagram; matrix Schubert variety}

\maketitle

\setcounter{tocdepth}{1}
\tableofcontents

\section{Introduction}
\subsection{Context}
In this article, we study representation spaces of quivers.  These come with a natural action of a product of general linear groups; a quiver locus is, by definition, an orbit closure for this action (see \S \ref{sect:quiverloci}).

Quiver loci have been studied in representation theory of finite-dimensional algebras since at least the early 1980s, with particular interest in orbit closure containment and their singularities.   See the surveys \cite{Bongartzsurvey,Zwarasurvey,HZsurvey} for a detailed account.
They are also important in Lie theory, where they lie at the foundation of Lusztig's geometric realization of Ringel's work on quantum groups \cite{MR1035415,Rhallalgebras}.
From another viewpoint, quiver loci generalize some classically studied varieties such as determinantal varieties and varieties of complexes.  This is because Bongartz's work \cite{MR1402728} implies that Dynkin quiver loci are defined, at least up to radical, by minors of certain block form matrices (see the introduction and \S 3 of \cite{KR}, or \cite{MR3008913}).

The line of approach most directly related to this paper was initiated by Buch and Fulton \cite{BFchernclass}. They produced formulas for equivariant cohomology classes of quiver loci, and interpreted them as universal formulas for degeneracy loci associated to representations of the quiver in the category of vector bundles on an algebraic variety.  More formulas for equivariant cohomology and $K$-classes of quiver loci were subsequently produced in papers such as \cite{MR1932326,FRdegenlocithom,BFR,KMS}.  Rim\'anyi \cite{RimanyiCOHA} has recently shown that these classes are natural structure constants for the Cohomological Hall Algebra of Kontsevich and Soibelman associated to the quiver \cite{KontSoib}.
A more detailed account of the state of the art can be found in recent works such as \cite{MR2492443,allman,MR3239295}.

\subsection{Summary of results and methods}
Our formulas are first proven in the bipartite (i.e. sink-source) orientation; this comprises the bulk of the paper.  We then extend the bipartite results to all orientations using the method of \cite[\S5]{KR}: if $Q$ is a type $A$ quiver of arbitrary orientation, there is an associated bipartite type $A$ quiver $\tq$ and a bijection between orbit closures for $Q$ and a certain subset of orbit closures for $\tq$.  We show that our formulas for the quiver loci of $Q$ can be obtained by a simple substitution into the formulas for the  corresponding quiver loci of $\tq$ (Proposition \ref{prop:alphabetsub}), followed by simplification to make them intrinsic to $Q$ (making minimal reference to the associated bipartite quiver $\tq$).
 
The bipartite orientation is special because of the bipartite Zelevinsky map constructed in \cite{KR}.  It is an analogue of the map constructed by Zelevinsky for equioriented type $A$ quivers in \cite{Zgradednilp}, which was further studied in \cite{LMdegen, KMS}.  
In both cases, the maps give isomorphisms between quiver loci and intersections of Schubert varieties with an opposite Schubert cell in a
partial flag variety (Kazhdan-Lusztig varieties).
The bipartite Zelevinsky map allows us to draw on the large body of knowledge about Schubert varieties to produce our formulas.

We now describe each formula and briefly indicate the proof technique. We only discuss the $K$-theoretic formulas here, since the associated multidegree versions follow from the standard relation between $K$-polynomials and multidegrees (see \S \ref{sect:Kpolys}).
The \emph{ratio formula} (Theorem \ref{thm:ratioformula}) expresses each $K$-polynomial as a ratio of specialized double Grothendieck polynomials. It is a relatively straightforward consequence of the existence and properties of the bipartite Zelevinsky map.
The \emph{pipe formula} (Theorem \ref{thm:pipeformula}) expresses each $K$-polynomial  as a sum over pipe dreams that have a certain shape related to the Zelevinsky map.  Given the ratio formula, its proof also follows in a rather straightforward way from work of Woo and Yong on pipe formulas for Kazhdan-Lustzig varieties \cite{wooyong}.

The \emph{component formula} (Theorem \ref{thm:component}) is our main theorem. It expresses each $K$-polynomial as a sum of products of double Grothendieck polynomials, where the sum is taken over $K$-theoretic lacing diagrams for the corresponding orbit closure.  The proof of the component formula is more involved.  To prove the bipartite version, we  degenerate quiver loci to better understood varieties in a way that preserves $K$-polynomials.  More precisely, we produce a flat family of group schemes acting fiberwise on a flat family of varieties such that over a general fiber, the orbit closures are isomorphic to quiver loci, and over the special fiber, the orbit closures are unions of products of matrix Schubert varieties.  Proving the component formula then requires combinatorial methods to determine the M\"obius function of the poset of varieties obtained by taking all possible intersections of the irreducible components of the special fiber, decomposing each of those into irreducible components, and repeating this process. An overview of this proof can be found in the expository article \cite{KinserICRA} based on the present paper.

\begin{remark}
Though we use the algebraic language of multidegrees and $K$-polynomials in this paper, our formulas also hold in other settings as there are several other interpretations of multidegrees and $K$-polynomials. These interpretations are in the languages of equivariant cohomology and $K$-theory, \cite[\S1.4]{KMS}, and the virtual, rational representations of $\GL(\bd)$ \cite[\S3]{MR2492443}.  When cited literature is written from one of these perspectives, we will use the $K$-polynomial or multidegree version without explicit mention of the conversion.
\end{remark}

\subsection{Relation to existing literature}\label{sect:existinglit}
Most of the formulas for quiver loci in the literature are for the equioriented type $A$ case. Our formulas and proof techniques are modeled on \cite{KMS}. There one already finds a $K$-theoretic ratio formula for the equioriented case; we generalize this to arbitrary orientation.  Our pipe formulas and component formulas generalize theirs in two directions: by moving from a specific orientation to arbitrary orientation, and also by moving from multidegrees to $K$-polynomials.  
Some intermediate results that helped us find these generalizations are the $K$-theoretic pipe formula \cite[Thm.~3]{MR2137947} and component formula \cite[Thm.~6.3]{MR2114821} for the equioriented case, and the multidegree component formula for arbitrary orientation \cite[Thm.~1]{MR2306279}.  The $K$-theoretic component formula that we prove is \cite[Conj.~1]{MR2306279}.

Our proof of the component formulas uses the Gr\"obner degeneration ideas of \cite{KMS} (cf. the later paper \cite{yongComponent} for a purely combinatorial approach). However, our approach is more direct by taking advantage of \cite{wooyong} and geometric streamlining suggested by the referees of this article.
The proof in \cite{KMS} involves taking a certain limit as the dimension vector grows, obtained by adding copies of the projective-injective indecomposable representation of an equioriented type $A$ quiver (the ``longest lace'').  Other orientations never have a projective-injective representation, making it unclear to us what the analogous technique would be.
In particular, this is why we do not give an analogue of the tableau formula of \cite{KMS}, given in terms of Schur functions.  Its proof depends on the \emph{stable} component formula in terms of Stanley symmetric functions.  This formula is most closely related to the conjectured positive formula for quiver loci of all Dynkin types in \cite[Conj.~1.1]{MR2492443}.

Finally, we note that our work gives a geometric interpretation of the ``double'' quiver polynomials studied in \cite{MR1932326,KMS} and other works cited above: these come from the natural action of a larger torus on lifts of equioriented orbit closures to larger representation spaces of an associated bipartite quiver.

\subsection*{Acknowledgements}
We thank Alex Yong for directing us to his work \cite{wooyong} with Alex Woo, and Christian Stump for his pipe dream \LaTeX\ macros. We also thank Anders Buch and Alex Yong for their comments on the first draft of this paper. We are especially grateful to the anonymous referees for numerous helpful suggestions during the review process.


\section{Background and preliminary results}\label{sect:background}
We work over a fixed field $K$ throughout the paper, which is often omitted from our notation. To simplify the exposition below, we will assume that $K$ is algebraically closed. However, since all schemes appearing in this paper are defined over $\ZZ$, there is no difficulty generalizing results to arbitrary $K$.

\subsection{Type $A$ quiver loci} \label{sect:quiverloci}
Fix a quiver $Q$ with vertex set $Q_0$ and arrow set $Q_1$. 
Given a \emph{dimension vector} $\bd\colon Q_0 \to \mathbb{Z}_{\geq 0}$, we have the \emph{representation space}
\begin{equation}
\mathtt{rep}_Q(\bd) := \prod_{a \in Q_1} \Mat(\bd(ta), \bd(ha)),
\end{equation}
where $\Mat(m, n)$ denotes the algebraic variety of matrices with $m$ rows, $n$ columns, and entries in $K$, and $ta$ and $ha$ denote the \emph{tail} and \emph{head} of an arrow $ta \xrightarrow{a} ha$. Each $V=(V_a)_{a \in Q_1}$ in $\mathtt{rep}_Q(\bd)$ is a \emph{representation} of $Q$; each matrix $V_a$ maps row vectors in $K^{\bd(ta)}$ to row vectors in $K^{\bd(ha)}$ by right multiplication.  
We denote the total dimension of the dimension vector $\bd$ by $d = \sum_{z \in Q_0} \bd(z)$.  
There is a \emph{base change group} $\GL(\bd) := \prod_{z \in Q_0} \GL_{\bd(z)}$,
which acts on $\mathtt{rep}_Q(\bd)$. 
Here $GL_{\bd(z)}$ denotes the general linear group of invertible $\bd(z)\times \bd(z)$ matrices with entries in $K$.
Explicitly, if $g = (g_z)_{z\in Q_0}$ is an element of $\GL(\bd)$, and $V = (V_a)_{a\in Q_1}$ is an element of $\mathtt{rep}_Q(\bd)$, then the (right) action of $GL(\bd)$ on $\mathtt{rep}_Q(\bd)$ is given by
$V\cdot g = (g_{ta}^{-1}V_a g_{ha})_{a\in Q_1}.$ 
  The closure of a $\GL(\bd)$-orbit in $\mathtt{rep}_Q(\bd)$ is called a \emph{quiver locus}.  
  An introduction to the theory of quiver representations can be found in the textbooks \cite{assemetal,Schiffler:2014aa}.  

In this paper, we only work with quivers of Dynkin type $A$.  We arbitrarily designate one endpoint ``left'' and the other ``right'' so that we can speak of arrows pointing left or right.   
The \emph{bipartite orientation}, where every vertex is either a sink or source, is the fundamental orientation in type $A$: understanding the geometry of quiver loci in all other orientations essentially reduces to this one (see \cite[\S5]{KR}).  We use the following running example throughout the paper.
\begin{equation}\label{eq:bipartiteEx}
Q=\quad
\vcenter{\hbox{\begin{tikzpicture}[point/.style={shape=circle,fill=black,scale=.5pt,outer sep=3pt},>=latex]
   \node[outer sep=-2pt] (y0) at (-1,1) {${y_3}$};
   \node[outer sep=-2pt] (x1) at (0,0) {${x_3}$};
  \node[outer sep=-2pt] (y1) at (1,1) {${y_2}$};
   \node[outer sep=-2pt] (x2) at (2,0) {${x_2}$};
  \node[outer sep=-2pt] (y2) at (3,1) {${y_1}$};
   \node[outer sep=-2pt] (x3) at (4,0) {${x_1}$};
  \node[outer sep=-2pt] (y3) at (5,1) {${y_0}$};
  \path[->]
  	(y0) edge node[left] {${\zb_3}$} (x1) 
	(y1) edge node[left] {${\za_3}$} (x1)
  	(y1) edge node[left] {${\zb_2}$} (x2) 
	(y2) edge node[left] {${\za_2}$} (x2)
  	(y2) edge node[left] {${\zb_1}$} (x3)
	(y3) edge node[left] {${\za_1}$} (x3);
   \end{tikzpicture}}}
\end{equation}
We assume that all of our bipartite type $A$ quivers start and end with a source vertex; other cases follow by setting $\bd(z) =0$ for choices of $z$ on either end.  As in \eqref{eq:bipartiteEx}, we label all sink vertices by $x_j$, $1\leq j\leq n$, all source vertices by $y_i$, $0\leq i\leq n$, left-pointing arrows by $\alpha_j$, $1\leq j\leq n$, and right-pointing arrows by $\beta_i$, $1\leq i\leq n$. Subscripts increase from right to left. Since the matrices in our quiver representations act on row vectors instead of column vectors in this paper, this notation slightly differs from \cite{KR}.

Until \S \ref{sect:arbitrary}, we work with a fixed bipartite type $A$ quiver $Q$ and dimension vector $\bd$ unless explicitly stated otherwise; hence, these will be omitted from the notation. In particular, we write $\rep$ instead of $\mathtt{rep}_Q(\bd)$, and $\GL$ instead of $\GL(\bd)$.
Throughout the paper, we use the notation $\zO$ to denote a $\GL$-orbit closure in $\rep$, and $\zO^\circ$ the dense orbit of $\zO$.  We also assume that $\dim \rep > 0$ to avoid some trivial, degenerate cases.

\subsection{Permutation and matrix conventions}\label{sect:matrices}
We now fix our conventions regarding matrices and permutations. A permutation $v$ in the symmetric group $S_m$ is a bijection $v\colon\{1,\dotsc,m\}\rightarrow \{1,\dotsc, m\}$. If $v,w\in S_m$, then $vw\in S_m$ will denote the composition of functions $i\mapsto v(w(i))$.  A \emph{simple transposition} $\tau_i$ is a permutation that switches $i$ and $i+1$, while fixing everything else, and the \emph{length} $\ell(v)$ of a permutation $v$ is the minimal number of factors needed to express $v$ as a product of simple transpositions.

To a permutation $v\in S_m$, one can associate an $m\times m$ permutation matrix, $v^T$, which has a $1$ in location $(i,j)$ if $v(i) = j$, and zeroes elsewhere. 
With this convention, we have that 
$\vec{e}_i v^T = \vec{e}_{v(i)}$, where $\vec{e}_i$ is the $1\times m$ row vector with a $1$ in position $i$ and zeroes elsewhere. 
From here on, we will use $v$ to denote both a permutation and its associated matrix. To avoid confusion over what is meant by $vw$, we declare that $vw$ always refers to the function $i\mapsto v(w(i))$ as above, and never multiplication of the corresponding permutation matrices; such matrix multiplication does not appear in the paper. Define the \emph{length of a permutation matrix} to be the length of its associated permutation.

Given a matrix $M$ with entries in a ring $R$, define $M_{p\times q}$ to be the matrix consisting of the intersection of the top $p$ rows and leftmost $q$ columns of $M$.  Similarly, let $M^{p\times q}$ be the matrix consisting of the intersection of the bottom $p$ rows and rightmost $q$ columns of $M$. Let $\minors(r,M)$ denote the ideal in $R$ generated by all $r\times r$ minors of $M$.

A $k\times l$ matrix $w$ is a \emph{partial permutation matrix} if its entries are all either 1 or 0 and there is at most one $1$ in each row and column.  Define the \emph{completion} $c(w)$ of $w$ as the permutation matrix which is of minimal possible dimensions such that $w$ lies in the northwest corner (i.e. $c(w)_{k\times l} = w$), and which has minimal length among such permutation matrices. If $c(w)$ is an $m\times m$ matrix, then we sometimes consider its associated permutation function as an element of a larger symmetric group  $S_{m'}$, $m'\geq m$, by setting $c(w)(i) := i$ for each $m+1\leq i\leq m'$.

Let $X$ be a space of matrices of fixed size with entries in $K$. The \emph{universal matrix over $X$} is the matrix of the same dimensions whose $(i,j)$ entry is the coordinate function on $X$ which returns the $(i,j)$ entry of $M$ when evaluated at $M \in X$ (which may be constant).

Let $\rot(M)$ denote the $180^\circ$ rotation of a matrix $M$, and let the \emph{antidiagonal} of an $m\times m$ matrix be the set of matrix entries in positions $(i, m+1-i)$ for $1 \leq i \leq m$.

\subsection{Lacing diagrams}\label{sect:laces}
Lacing diagrams were introduced in \cite{AdF} to visualize type $A$ quiver representations, and were interpreted as sequences of partial permutation matrices in \cite{KMS}.  In this section we recall the essentials of lacing diagrams in arbitrary orientation.  We use \cite{MR2306279} as our main reference, noting that our conventions differ from theirs in order to make the connection with pipe dreams in \S \ref{sect:pipeToLace} more natural.

A \emph{lacing diagram} of dimension vector $\bd$ for a type $A$ quiver $Q$ of arbitrary orientation is a sequence of columns of dots, together with arrows connecting dots in consecutive columns. The columns are indexed by $Q_0$, appearing in the same left to right order as in the quiver $Q$. There are $\bd(z)$ dots in the column associated to vertex $z\in Q_0$. Each dot may be connected to at most one dot in the column to the left of it, and to at most one dot in the column to the right of it. Arrows between dots point in the same direction as the corresponding arrow of $Q$. We vertically align dots as in \cite[Rem.~1]{MR2306279}: if two consecutive columns of dots are connected by a left-pointing arrow, then the two columns of dots are aligned at the bottom; if two columns of dots are connected by a right-pointing arrow, then the two columns of dots are aligned at the top. A lacing diagram continuing the running example is on the left side of Figure \ref{fig:laces}.


A lacing diagram can be interpreted as a sequence of partial permutation matrices $\bw = (w_a)_{a \in Q_1}$, as also seen in Figure \ref{fig:laces}.  For each arrow $a$, the partial permutation matrix $w_a$ has a 1 in row $i$, column $j$ exactly when there is an arrow from the $i$th dot from the top of the source column to the $j$th dot from the top of the target column. 
Since the sequence of partial permutation matrices $\bw$ encodes exactly the same data as a lacing diagram, we will simply refer to $\bw$ as a lacing diagram as well.

A lacing diagram can be completed to an \emph{extended lacing diagram} as follows: if $w$ is associated to a right-pointing arrow, replace $w$ with its completion $c(w)$, and if $w$ is associated to a left-pointing arrow, replace $w$ by $\rot(c(\rot(w)))$.  An extended lacing diagram can be visualized by adding \emph{virtual} red solid squares and dashed arrows to the original lacing diagram as in the right of Figure \ref{fig:laces}.
The length $|\bw|$ of a lacing diagram $\bw$ is defined as the sum of the lengths of the permutations in the extended lacing diagram, or equivalently, the total number of crossings of laces in the extended lacing diagram.

\begin{figure}
\begin{tikzpicture}[point/.style={shape=circle,fill=black,scale=.5pt,outer sep=3pt},>=latex] 
\node[point] (1b) at (0,1) {};
\node[point] (1c) at (0,2) {};
\node[point] (2b) at (1,1) {};
\node[point] (2c) at (1,2) {};
\node[point] (3b) at (2,1) {};
\node[point] (3c) at (2,2) {};
\node[point] (4a) at (3,0) {};
\node[point] (4b) at (3,1) {};
\node[point] (4c) at (3,2) {};
\node[point] (5a) at (4,0) {};
\node[point] (5b) at (4,1) {};
\node[point] (6a) at (5,0) {};
\node[point] (6b) at (5,1) {};
\node[point] (7a) at (6,0) {};
  
\node at (0,-0.5) {$y_3$}; 
\node at (1,-0.5) {$x_3$}; 
\node at (2,-0.5) {$y_2$}; 
\node at (3,-0.5) {$x_2$}; 
\node at (4,-0.5) {$y_1$}; 
\node at (5,-0.5) {$x_1$}; 
\node at (6,-0.5) {$y_0$}; 
  
\path[->,thick]
   (1c) edge (2c)
   (3b) edge (2b)
   (3c) edge (2c)
   (3b) edge (4b)
   (3c) edge (4c)
   (5a) edge (4c)
   (5b) edge (4b)
   (5a) edge (6a)
   (5b) edge (6b)
   (7a) edge (6a);
  
  \end{tikzpicture}
\qquad
\begin{tikzpicture}[point/.style={shape=circle,fill=black,scale=.5pt,outer sep=3pt},epoint/.style={shape=rectangle,fill=red,scale=.5pt,outer sep=3pt},>=latex] 
\node[epoint] (1a) at (0,0) {};
\node[point] (1b) at (0,1) {};
\node[point] (1c) at (0,2) {};
\node[epoint] (2a) at (1,0) {};
\node[point] (2b) at (1,1) {};
\node[point] (2c) at (1,2) {};
\node[epoint] (3a) at (2,0) {};
\node[point] (3b) at (2,1) {};
\node[point] (3c) at (2,2) {};
\node[point] (4a) at (3,0) {};
\node[point] (4b) at (3,1) {};
\node[point] (4c) at (3,2) {};
\node[point] (5a) at (4,0) {};
\node[point] (5b) at (4,1) {};
\node[epoint] (5c) at (4,2) {};
\node[point] (6a) at (5,0) {};
\node[point] (6b) at (5,1) {};
\node[point] (7a) at (6,0) {};
\node[epoint] (7b) at (6,1) {};
  
\node at (0,-0.5) {$y_3$}; 
\node at (1,-0.5) {$x_3$}; 
\node at (2,-0.5) {$y_2$}; 
\node at (3,-0.5) {$x_2$}; 
\node at (4,-0.5) {$y_1$}; 
\node at (5,-0.5) {$x_1$}; 
\node at (6,-0.5) {$y_0$}; 

\path[->,thick]
   (1b) edge[red,dashed] (2a)
   (1c) edge (2c)
   (1a) edge[red,dashed] (2b)
   (3b) edge (2b)
   (3c) edge (2c)
   (3a) edge[red,dashed] (4a)
   (3b) edge (4b)
   (3c) edge (4c)
   (5a) edge (4c)
   (5b) edge (4b)
   (5c) edge[red,dashed] (4a)
   (5a) edge (6a)
   (5b) edge (6b)
   (7b) edge[red,dashed] (6b)
   (7a) edge (6a);
  \end{tikzpicture}\\
  \vspace{.5cm}
$
\left(
\begin{bmatrix}
1 & 0\\
0 & 0
\end{bmatrix},
\begin{bmatrix}
1 & 0\\
0 & 1
\end{bmatrix},
\begin{bmatrix}
1 & 0 & 0\\
0 & 1 & 0\\
\end{bmatrix},
\begin{bmatrix}
0 & 1 & 0\\
1 & 0 & 0
\end{bmatrix},
\begin{bmatrix}
1 & 0\\
0 & 1
\end{bmatrix},
\begin{bmatrix}
0 & 1 
\end{bmatrix}
\right)$
    \caption{A minimal lacing diagram, its matrix form, and its extended diagram.}
\label{fig:laces}
\end{figure}

A lacing diagram $\bw$ is naturally an element of $\mathtt{rep}_Q(\bd)$ by assigning to each  $a \in Q_1$ the matrix $w_a$.
Two lacing diagrams lie in the same $\GL(\bd)$-orbit if and only if they have the same number of laces between each pair of columns.  A \emph{minimal lacing diagram} is one whose length is minimal among those in its $\GL(\bd)$-orbit, and we denote by $W(\zO)$ the set of all minimal lacing diagrams in the orbit $\zO^\circ$.  For $\bw \in W(\zO)$ we have by \cite[Cor.~2]{MR2306279} that
\begin{equation}\label{eq:codimcrossings}
\codim \zO=|\bw|,
\end{equation}
where the codimension of $\zO$ is taken in $\rep_Q(\bd)$ throughout the paper.

By \cite[Prop.~1]{MR2306279}, two minimal lacing diagrams lie in the same orbit if and only if their extended diagrams are related by a series of transformations of the following form, 
\begin{equation}\label{eq:lacemove}
\vcenter{\hbox{  
\begin{tikzpicture}[point/.style={shape=circle,fill=black,scale=.5pt,outer sep=3pt},>=latex]
\node[point] (1a) at (0,0) {};
\node[point] (1b) at (0,1) {};
\node[point] (2a) at (1,0) {};
\node[point] (2b) at (1,1) {};
\node[point] (3a) at (2,0) {};
\node[point] (3b) at (2,1) {};

\path[-,thick]
   (1b) edge (2a)
   (1a) edge (2b)
   (2a) edge (3a)
   (2b) edge (3b);
\end{tikzpicture}}}
\longleftrightarrow
\vcenter{\hbox{  
\begin{tikzpicture}[point/.style={shape=circle,fill=black,scale=.5pt,outer sep=3pt},>=latex]
\node[point] (1a) at (0,0) {};
\node[point] (1b) at (0,1) {};
\node[point] (2a) at (1,0) {};
\node[point] (2b) at (1,1) {};
\node[point] (3a) at (2,0) {};
\node[point] (3b) at (2,1) {};

\path[-,thick]
   (1a) edge (2a)
   (1b) edge (2b)
   (2a) edge (3b)
   (2b) edge (3a);
\end{tikzpicture}}}
\end{equation}
where both middle dots and at least one dot in each of the outer columns is not virtual (otherwise one of the permutations would not be a minimal length extension of the original partial permutation).
There are also $K$-theoretic transformations of extended lacing diagrams
\begin{equation}
\vcenter{\hbox{  
\begin{tikzpicture}[point/.style={shape=circle,fill=black,scale=.5pt,outer sep=3pt},>=latex]
\node[point] (1a) at (0,0) {};
\node[point] (1b) at (0,1) {};
\node[point] (2a) at (1,0) {};
\node[point] (2b) at (1,1) {};
\node[point] (3a) at (2,0) {};
\node[point] (3b) at (2,1) {};

\path[-,thick]
   (1b) edge (2a)
   (1a) edge (2b)
   (2a) edge (3a)
   (2b) edge (3b);
\end{tikzpicture}}}
\longleftrightarrow
\vcenter{\hbox{  
\begin{tikzpicture}[point/.style={shape=circle,fill=black,scale=.5pt,outer sep=3pt},>=latex]
\node[point] (1a) at (0,0) {};
\node[point] (1b) at (0,1) {};
\node[point] (2a) at (1,0) {};
\node[point] (2b) at (1,1) {};
\node[point] (3a) at (2,0) {};
\node[point] (3b) at (2,1) {};

\path[-,thick]
   (1b) edge (2a)
   (1a) edge (2b)
   (2a) edge (3b)
   (2b) edge (3a);
\end{tikzpicture}}}
\longleftrightarrow
\vcenter{\hbox{  
\begin{tikzpicture}[point/.style={shape=circle,fill=black,scale=.5pt,outer sep=3pt},>=latex]
\node[point] (1a) at (0,0) {};
\node[point] (1b) at (0,1) {};
\node[point] (2a) at (1,0) {};
\node[point] (2b) at (1,1) {};
\node[point] (3a) at (2,0) {};
\node[point] (3b) at (2,1) {};

\path[-,thick]
   (1a) edge (2a)
   (1b) edge (2b)
   (2b) edge (3a)
   (2a) edge (3b);
\end{tikzpicture}}}
\end{equation}
with the same condition on the dots, and in addition the two middle dots should be consecutive in their column.  A lacing diagram for an orbit is called \emph{$K$-theoretic} if its extended lacing diagram can be obtained by a sequence of $K$-theoretic transformations from the extended diagram of a minimal lacing diagram for the orbit.  We let $KW(\zO)$ denote the set of $K$-theoretic lacing diagrams for the orbit $\zO^\circ.$

\subsection{Matrix Schubert varieties}\label{sect:matSchubert}
Matrix Schubert varieties \cite{Fulton} are subvarieties of $X=\text{Mat}(k,l)$ obtained by imposing rank conditions on certain submatrices. 
Let $w$ be a partial permutation matrix in $X$.
The \emph{northwest matrix Schubert variety} $X_w$ is the subvariety $X_w:= \{M\in X \mid \forall (p,q)\ \text{rank }M_{p\times q}\leq \text{rank }w_{p\times q}\}$, and the \emph{southeast matrix Schubert variety} $X^w$ is the subvariety $X^w:=\{M\in X \mid \forall (p,q)\ \text{rank }M^{p\times q}\leq \text{rank }w^{p\times q}\}$.
Let $Z = (z_{ij})$ be the universal matrix over $X$. The prime defining ideals of $X_w$ and $X^w$ are $J_w:=\sum_{(p,q)} \minors(1+\rank w_{p\times q}, Z_{p\times q})$ and $J^w:=\sum_{(p,q)} \minors (1+\rank w^{p\times q}, Z^{p\times q})$, respectively.

Each variety $X_w$ and $X^w$ is an orbit closure for the action of a product of Borel subgroups: let $B_+$ and $B_-$ denote Borel subgroups of invertible upper and lower triangular matrices respectively. Then we have that $X_w = \overline{B_- w B_+} \text{ and } X^w = \overline{B_+ w B_-}$, 
where the closures are taken inside of the space $X$, the Borel subgroups to the left of $w$ are inside of $\GL_k$, and the Borel subgroups to the right of $w$ are inside of $\GL_l$. 

\subsection{The bipartite Zelevinsky map}\label{sect:Zmap}
Here we recall the relationship established in \cite{KR} between quiver loci of bipartite type $A$ quivers and Schubert varieties. Retaining the notation from \S \ref{sect:quiverloci}, let $d_x = \sum_{j=1}^n \bd(x_j)$ and $d_y = \sum_{i=0}^n \bd(y_i)$, so that $d=d_x + d_y$. Writing $\bid_r$ for an $r\times r$ identity matrix, let $\mcell$ denote the space of $d\times d$ matrices of the form
\begin{equation}\label{eq:cell}
\begin{bmatrix}
* & \bid_{d_y}\\
\bid_{d_x} & 0 \\
\end{bmatrix}
\end{equation}
where entries in the block labeled by the asterisk are arbitrary elements of the base field $K$. 

Let $Z=(z_{ij})$ be the universal matrix over $\mcell$.
Given a $d\times d$ permutation matrix $v$, define the ideal $I_v:=\sum_{(p,q)}\minors(1+\rank v_{p\times q}, Z_{p\times q})$.
The affine scheme defined by the prime ideal $I_v$, which we denote by $Y_v\subseteq \mcell$, is called a \emph{Kazhdan-Lusztig variety}  and is isomorphic to the intersection of an opposite Schubert cell and a Schubert variety (see \cite{MR2422304} for details). 
To be precise, let $G:=\GL_d$ and let $P$ be the parabolic subgroup of block lower triangular matrices where the diagonals have block sizes 
\begin{equation}\label{eq:rowblocks}
\bd(y_0),\dotsc, \bd(y_n),\ \bd(x_n),\ \bd(x_{n-1}),\dotsc, \bd(x_1),
\end{equation}
listed from northwest to southeast. Then $\mcell$ is isomorphic to the opposite Schubert cell $P\backslash Pv_0B_{-}$, where
$v_0$ is the permutation matrix obtained from \eqref{eq:cell} by setting all entries in the block labeled by the asterisk to 0.
The affine scheme $Y_v$ is isomorphic to the intersection of the Schubert variety $\overline{P\backslash PvB_{+}}$ with the opposite cell $P\backslash Pv_0B_{-}$ in $P \backslash G$.

In \cite{KR} a closed immersion $\zeta\colon \mathtt{rep} \to \mcell$ was defined, called the \emph{bipartite Zelevinsky map}. The image of a representation $(V_a)_{a\in Q_1}$ under this map is shown in  Figure \ref{fig:bigzimage} in the case of our running example. The definition of the bipartite Zelevinsky map for an arbitrary bipartite type $A$ quiver is extrapolated in the obvious way.

\begin{theorem}\label{thm:Zmap}\cite[Thm~4.12]{KR}
The bipartite Zelevinsky map $\zeta$ restricts to an isomorphism from each quiver locus $\zO \subseteq \mathtt{rep}$ to a Kazhdan-Lusztig variety $Y_{v({\zO})} \subseteq \mcell$.
\end{theorem}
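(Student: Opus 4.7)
The plan is to leverage the explicit description of $\zeta$ to translate rank conditions defining $\zO_\br$ into rank conditions on northwest submatrices defining $Y_{v(\br)}$. Since $\zeta$ is described entry-by-entry as sending $(V_a)_{a\in Q_1}$ to the matrix with $M(V)$ in the upper left, $\bid_{d_y}$ in the upper right, $\bid_{d_x}$ in the lower left and zeros in the lower right, it is clear from inspection that $\zeta$ is a closed immersion of affine schemes: the image is cut out by equations setting certain entries equal to $0$ or $1$, and the coordinate ring of $\rep$ is recovered as a polynomial subring of $K[\mcell]$.

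Next I would define (or read off from the rank array $\br$) the permutation $v(\br)$ and verify the matching of rank functions. Concretely, for each pair $(p,q)\in [1,d]\times[1,d]$, the northwest submatrix $\zeta(V)_{p\times q}$ decomposes into four pieces according to how the cut $(p,q)$ sits relative to the block structure coming from \eqref{eq:rowblocks}. The identity blocks contribute a constant rank determined only by $(p,q)$, while the remaining contribution is exactly $\rank M_J(V)$ for some interval $J\subseteq Q$ determined by $(p,q)$. Thus
\[
\rank \zeta(V)_{p\times q} \;=\; c(p,q) + \br_J(V)
\]
for an explicit constant $c(p,q)$ and an explicit interval $J=J(p,q)$. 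The permutation $v(\br)$ is then characterized by setting $\rank v(\br)_{p\times q} = c(p,q) + \br_{J(p,q)}$; one should check that this prescription indeed defines the permutation matrix of some $v(\br)\in S_d$, which is a combinatorial check on rank arrays that arise from actual dimension vectors.

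With these rank identifications in hand, set-theoretic equality $\zeta(\zO_\br) = Y_{v(\br)}$ follows: a point $V$ lies in $\zO_\br$ iff all interval rank bounds $\br_J(V)\le\br_J$ hold (with equality on the dense orbit), which translates directly into the rank bounds cutting out $Y_{v(\br)}$. The main obstacle is upgrading this to a \emph{scheme-theoretic} isomorphism, i.e.\ showing that $\zeta^{-1}(I_{v(\br)}) = I(\zO_\br)$ as ideals, not just after taking radicals. For $Y_{v(\br)}$ this is automatic because $I_{v(\br)}$ is already prime by the Kazhdan--Lusztig theory cited in \cite{MR2422304}; the nontrivial content is that the pulled-back minor ideal defines $\zO_\br$ on the nose. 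I would handle this by observing that clearing the identity blocks via elementary row and column operations reduces the generating minors of $I_{v(\br)}$ pulled back along $\zeta$ to the minors of the block matrices $M_J(V)$ that are known (e.g., by the results of \cite{KR} recalled in the introduction) to generate the prime ideal of $\zO_\br$ up to radical, and that the scheme-theoretic structure is forced to agree by the isomorphism of Schubert schemes with the image.

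Finally, since $\zeta$ is a closed immersion, the induced map $\zO_\br\to Y_{v(\br)}$ is automatically a closed immersion; combined with surjectivity of the image onto $Y_{v(\br)}$ and the ideal identification above, we conclude it is an isomorphism of schemes. The expected hard step is the ideal-level matching rather than the rank calculation or the construction of $v(\br)$, both of which are essentially bookkeeping once the block structure of $\zeta(V)$ is written down.
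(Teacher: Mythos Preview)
The paper does not prove this theorem; it is quoted verbatim from \cite[Theorem 4.12]{KR} as background. So there is no proof in the present paper to compare against, and your proposal should be read as an attempted reconstruction of the argument in \cite{KR}.

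Your outline for the set-theoretic statement is essentially correct: $\zeta$ is a closed immersion onto the locus $Y_{v_*}$ cut out by vanishing of the non-snake variables, and northwest block ranks of $\zeta(V)$ are determined by the interval ranks $\br_J(V)$ plus constants coming from the identity blocks. One caveat: your displayed formula $\rank \zeta(V)_{p\times q} = c(p,q) + \br_{J(p,q)}(V)$ is only literally true when $(p,q)$ is a block corner for the block structure of $\bd$; at intermediate $(p,q)$ the rank depends on finer data. This is harmless because the Zelevinsky permutation $v(\br)$ has its $1$s arranged northwest-to-southeast within each block, so its essential set lies only at block corners and those are the only rank conditions that matter---but you should say so.

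Where your proposal goes wrong is the scheme-theoretic step. You correctly note that $I_{v(\br)}$ is prime, then declare the ``nontrivial content'' to be that the pulled-back ideal equals $I(\zO_\br)$, and propose to handle this by reducing to interval minors ``known \dots\ to generate the prime ideal of $\zO_\br$ up to radical'' and then appealing to ``the isomorphism of Schubert schemes with the image.'' The first of these only gives equality up to radical, and the second is circular---it is exactly what you are trying to prove. In fact the step you flag as hard is nearly free: since $\zeta$ is an isomorphism onto $Y_{v_*}$ and $Y_{v(\br)}\subseteq Y_{v_*}$ (either by Bruhat comparison $v_*\le v(\br)$ or because your rank analysis forces the non-snake variables into $I_{v(\br)}$), the scheme $\zeta^{-1}(Y_{v(\br)})$ is isomorphic to $Y_{v(\br)}$ and hence integral. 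On the other hand $\zO_\br$ is by definition an orbit closure with its reduced structure, so $I(\zO_\br)$ is prime. Two prime ideals with the same zero locus in an affine space are equal, and you have already established the set-theoretic equality. That finishes the proof without ever needing to match generators.
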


The permutation $v(\zO)$ in the statement of Theorem \ref{thm:Zmap} is known as the \emph{Zelevinsky permutation} of $\zO$, and will be discussed in the next subsection.  We define $v_*$ to be the Zelevinsky permutation of the entire space $\mathtt{rep}$, so that $Y_{v_*}$ is exactly the image of $\zeta$.

A dimension vector $\bd$ determines a subdivision of any $d\times d$ matrix into blocks as follows (see Figure \ref{fig:bigzimage} for an example).  Row blocks are indexed from top to bottom by $y_0,\dots,y_n, x_n, x_{n-1}, \dotsc, x_1$, with sizes determined by $\bd$ (i.e., as in \eqref{eq:rowblocks} above).  Column blocks are indexed from left to right by $x_n, x_{n-1}, \dotsc, x_1, y_0, y_1, \dotsc, y_n$, with sizes similarly determined by $\bd$: 
\begin{equation}\label{eq:colblocks}
\bd(x_n),\dotsc, \bd(x_1),\ \bd(y_0),\ \bd(y_1),\dotsc, \bd(y_n).
\end{equation}

\begin{definition}\label{def:snake}
For any $d\times d$ matrix with block structure as above, the \emph{block corresponding to $\beta_k$ (resp., $\alpha_k$)} is the intersection of the block row indexed by $y_k$ (resp. $y_{k-1}$) with the block column indexed by $x_k$ (resp., $x_k$). The \emph{snake region} of the matrix is the set of locations $(i,j)$ which are in a block corresponding to some $\alpha_k$ or $\beta_k$. (For example, in Figure \ref{fig:bigzimage}, the snake region is the set of locations occupied by $V_{\alpha_1}$ through $V_{\beta_3}$.)
\end{definition}

\begin{figure}
\[\zeta(V) =
\vcenter{\hbox{
\begin{tikzpicture}[every node/.style={minimum width=1em}]
\matrix (m0) [matrix of math nodes,left delimiter  = {[},%
             right delimiter = {]}, nodes in empty cells] at (0,0)
{
0& 0 & V_{\za_1}& \bid_{\bd(y_0)} &  & \\
0 &  V_{\za_2} & V_{\zb_1}& & \bid_{\bd(y_1)} & \\
V_{\za_3} & V_{\zb_2} & 0 & & & \bid_{\bd(y_2)}\\
V_{\zb_3} & 0 & 0 & & & & \bid_{\bd(y_3)} \\
\bid_{\bd(x_3)} &  & \\
& \bid_{\bd(x_{2})} &  & \\
& &\bid_{\bd(x_{1})} \\
};
\node  at (0,2.5) {$\bd(y_0)$};
\node  at (1.2,2.5) {$\bd(y_1)$};
\node  at (2.4,2.5) {$\bd(y_2)$};
\node  at (3.6,2.5) {$\bd(y_3)$};
\node  at (-1.2,2.5) {$\bd(x_1)$};
\node  at (-2.4,2.5) {$\bd(x_2)$};
\node  at (-3.6,2.5) {$\bd(x_3)$};
\node  at (-5.2,2) {$\bd(y_0)$};
\node  at (-5.2,1.4) {$\bd(y_1)$};
\node  at (-5.2,0.8) {$\bd(y_2)$};
\node  at (-5.2,0.2) {$\bd(y_3)$};
\node  at (-5.2,-0.6) {$\bd(x_3)$};
\node  at (-5.2,-1.3) {$\bd(x_2)$};
\node  at (-5.2,-2) {$\bd(x_1)$};
\node[scale=3] (zero) at (m0-6-2 -| m0-2-5.south east)  {$0$};
\draw[thick] (m0-5-1.north west) -- (m0-5-1.north west -| m0-4-7.south east);
\draw[thick] (m0-1-4.north west) -- (m0-1-4.north west |- m0-7-3.south east);
\end{tikzpicture}}}
\]
\caption{Image of the Zelevinsky map in $\mcell$, labeled by block sizes}\label{fig:bigzimage}
\end{figure}


\subsection{Zelevinsky permutations}\label{sect:Zperm}

In this section, we recall the \emph{bipartite Zelevinsky permutation} $v(\zO)$ from \cite[\S4]{KR}. The definition we give here is more direct than the one in \cite[\S4]{KR} to make this paper more self-contained. 
 We give a characterization of $v(\zO)$ in terms of lacing diagrams in Theorem \ref{thm:lacesZperm} below, which is an essential combinatorial ingredient in our proof of the component formula.  In our definition of Zelevinsky permutation below, we assume that matrices have the block structure described immediately preceding Definition \ref{def:snake}, and we let $M^{\text{block}}_{p\times q}$ denote the matrix which consists of the intersection of the top $p$ row blocks and leftmost $q$ column blocks of $M$.

Let $\zO \subseteq \rep(\bd)$ be a quiver locus.
The \emph{Zelevinsky permutation $v(\zO)$} is the unique permutation in $S_d$ which satisfies the following conditions, where $V$ is any element of $\zO^\circ$ \cite[Prop.~4.8]{KR}:
\begin{enumerate}
\item  the number of 1s in block $(p,q)$ of the $d\times d$ permutation matrix $v(\zO)$ is equal to 
\begin{equation}\label{eq:nwbr}
\rank \zeta(V)^{\text{block}}_{p \times q} + \rank \zeta(V)^{\text{block}}_{p-1 \times q-1} - \rank \zeta(V)^{\text{block}}_{p-1 \times q} - \rank \zeta(V)^{\text{block}}_{p \times q-1}
\end{equation}
where a summand is taken to be 0 if $p$ or $q$ is outside of the range $\{1, \dotsc, 2n+1\}$;
\item the 1s in $v(\zO)$ are arranged from northwest to southeast across each block row;
\item the 1s in $v(\zO)$ are arranged from northwest to southeast down each block column.
\end{enumerate}

An important property of the Zelevinsky permutation is that 
\begin{equation}\label{eq:Zpermlength}
\codim \zO= \ell(v(\Omega)) - \ell(v_*)
\end{equation}
(see, for example, the proof of \cite[Cor.~4.13]{KR}).
The next theorem characterizes the Zelevinsky permutation of an orbit directly from any lacing diagram in the orbit, analogous to \cite[Prop.~1.6]{KMS}. See Figure \ref{fig:Zperm} for an example.

\begin{figure}
$  \begin{blockarray}{ccc|ccc|cc|c|cc|cc|cc}
            & x_3 && x_2 &&& x_1 && y_0 & y_1 && y_2 && y_3& \\
      \begin{block}{c[ccccccc|ccccccc]}
y_0 &  &  &  &  &  & 1 & 0 &  &\\
\cline{1-1}
y_1 &  &  & 1 & 0 & 0 &  &  &\\
       &  &  & 0 & 1 & 0 &  &  &\\
\cline{1-1}
y_2 & 1 & 0 &  &  &  &  &  & \\
       & 0 & 1 &  &  &  &  &  & \\
\cline{1-1}
y_3 &  &  &  &  &  &  &  & 1 &  &  &  &  & 0 & 0\\
       &  &  &  &  &  &  &  & 0 &  &  &  &  & 1 & 0\\
\cline{1-15}
x_3 &  &  &  &  &  & 0 & 1 &  &  &  &  &  & 0 & 0\\
       &  &  &  &  &  & 0 & 0 &  &  &  &  &  & 0 & 1\\
\cline{1-1}
x_2 &  &  & 0 & 0 & 1 &  &  &  &  &  & 0 & 0 &\\
       &  &  & 0 & 0 & 0 &  &  &  &  &  & 1 & 0 &\\
       &  &  & 0 & 0 & 0 &  &  &  &  &  & 0 & 1 &\\
\cline{1-1}
x_1 &  &  &  &  &  &  &  &  & 1 & 0 &\\
      &  &  &  &  &  &  &  &  & 0 & 1 &\\
      \end{block}
    \end{blockarray}$
    \caption{The Zelevinsky permutation matrix for the quiver locus associated to the lacing diagram in Figure \ref{fig:laces}. The empty blocks in the matrix contain only zeros.}
\label{fig:Zperm}
\end{figure}

\begin{theorem}\label{thm:lacesZperm}
Let $\bw \in \zO^\circ$ be a lacing diagram.  The Zelevinsky permutation $v(\zO)$ is the unique permutation matrix satisfying the following:
\begin{enumerate}[(Z1)]
\item for $z_i,z_j\in Q_0$ with $z_i$ to the left of $z_j$ in $Q$ (including the case $z_i=z_j$), 
 the number of 1s in block $(z_i, z_j)$ is the number of laces with left endpoint $z_i$ and right endpoint $z_j$; 
\item for $z_i,z_j\in Q_0$ with $z_i$ exactly one vertex to the right of $z_j$ in $Q$, the number of 1s in block $(z_i, z_j)$ is the number of arrows between the columns of $\bw$ indexed by $z_i$ and $z_j$;
\item the 1s  are arranged from northwest to southeast in each block row and column.
\end{enumerate}
\end{theorem}
\begin{proof}
First consider the case where $\bw$ consists of a single lace, having left endpoint $z_i$ and right endpoint $z_j$, noting that in this situation all blocks of the matrix under consideration have size one or zero.   Then (Z1) puts a 1 only in row $z_i$, and every row except $z_i$ gets single 1 from (Z2) because the vertices which are not the left endpoint of $\bw$ are the right endpoint of some arrow of $\bw$.  
Therefore, a matrix satisfying (Z1) and (Z2) has exactly one entry equal to 1 in each row.
A similar argument shows there is exactly one entry equal to 1 in each column, and thus there exists a unique permutation matrix satisfying (Z1) and (Z2) when $\bw$ has a single lace.  
In the case of many laces, the number of 1s in each block of a matrix satisfying (Z1), (Z2) is clearly additive with respect to direct sum of representations (disjoint union of laces).  Given such a matrix, filling in the top block row with 1s and 0s subject to (Z3), then proceeding down block rows filling each west to east using (Z3), we see that there exists a unique permutation matrix $M$ associated to any lacing diagram by (Z1), (Z2), (Z3).  

To prove $M=v(\zO)$, it is enough to show that both have the same number of 1s in each block, since (Z3) then determines the arrangement of 1s as described above.
But the number of 1s in each block of $v(\zO)$ is also additive with respect to direct sum of representations: firstly, this quanity is a linear function of the values $\rank \zeta(\bw)^{\text{block}}_{p \times q}$ by \eqref{eq:nwbr}. 
Then these values are in turn additive with respect to direct sum of representations, because they are the dimensions of certain $\Hom$ spaces in the category of representations of $Q$ (up to constants determined by $\bd$; see \cite[Lem.~A.3]{KR} and the proof of \cite[Prop.~3.1]{KR}).  Therefore, we have reduced the theorem to showing that $M$ and $v(\zO)$ have the same number of 1s in each block in the case $\bw$ is a single lace.
Since both $M$ and $v(\zO)$ are permutation matrices, it is enough to show that whenever $M$ has a 1 in a certain block, $v(\zO)$ also does (then the rest of that row and column are zero in both $M$ and $v(\zO)$.

The number of 1s in a block of $v(\zO)$ is determined by \eqref{eq:nwbr}.  Since the blocks are of size one, \eqref{eq:nwbr} equals 1 if and only if
\[
\rank \zeta(\bw)_{p \times q} = \rank \zeta(\bw)_{p-1 \times q-1} +1 = \rank \zeta(\bw)_{p-1 \times q} +1 = \rank \zeta(\bw)_{p \times q-1}+ 1.
\]
We may simply verify that these equalities hold whenever $M$ has a 1 in position $(p,q)$. 
 Now this is easy to do by inspection, with three cases depending on whether $(p,q)$ corresponds to row and column labels $(y_i, x_{i+1})$ (the northwest antidiagonal of 1s contributed by (Z2)), $(x_i, y_i)$ (the southeast antidiagonal of 1s contributed by (Z2)), or the unique 1 contributed by (Z1) (whose row label is the left endpoint of $\bw$ and column label is the right endpoint of $\bw$).
An example of both $M$ and $\zeta(\bw)$ is shown below for $Q$ as in \eqref{eq:bipartiteEx}, with $\bw$ the lace from $y_3$ to $y_0$.
\begin{equation}\label{eq:pipelace}
M = 
  \begin{blockarray}{cccc|cccc}
            & x_3 & x_2 & x_1 & y_0 & y_1 & y_2 & y_3 \\
      \begin{block}{c[ccc|cccc]}
y_0 & 0 & 0 & 1 & 0 & 0 & 0 & 0\\
y_1 & 0 & 1 & 0 & 0 & 0 & 0 & 0\\
y_2 & 1 & 0 & 0 & 0 & 0 & 0 & 0\\
y_3 & 0 & 0 & 0 & 1 & 0 & 0 & 0\\
\cline{1-8}
x_3 & 0 & 0 & 0 & 0 & 0 & 0 & 1\\
x_2 & 0 & 0 & 0 & 0 & 0 & 1 & 0\\
x_1 & 0 & 0 & 0 & 0 & 1 & 0 & 0\\
      \end{block}
    \end{blockarray}
\qquad 
\zeta(\bw) = 
  \begin{blockarray}{cccc|cccc}
            & x_3 & x_2 & x_1 & y_0 & y_1 & y_2 & y_3 \\
      \begin{block}{c[ccc|cccc]}
y_0 & 0 & 0 & 1 & 1 & 0 & 0 & 0\\
y_1 & 0 & 1 & 1 & 0 & 1 & 0 & 0\\
y_2 & 1 & 1 & 0 & 0 & 0 & 1 & 0\\
y_3 & 1 & 0 & 0 & 0 & 0 & 0 & 1\\
\cline{1-8}
x_3 & 1 & 0 & 0 & 0 & 0 & 0 & 0\\
x_2 & 0 & 1 & 0 & 0 & 0 & 0 & 0\\
x_1 & 0 & 0 & 1 & 0 & 0 & 0 & 0\\
      \end{block}
    \end{blockarray}
\end{equation}
\end{proof}


\subsection{Multigradings}\label{sect:multigrading}
We briefly recall how a torus action on a variety induces a multigrading of its coordinate ring.
Suppose we have an algebraic right action of an algebraic torus $T = (K^\times)^d$ on an affine $K$-variety $X=\Spec R$, with the action of $t \in T$ on $x \in X$ written as $x \cdot t$.  This induces a right action of $T$ on the $K$-algebra $R$, given by $(f \cdot t)(x) = f(x \cdot t^{-1})$ for $f \in R$. 
For each $\mathbf{e}= (e_1, \dotsc, e_d) \in \ZZ^d$, we have a \emph{weight space} $R_\mathbf{e} = \setst{f \in R}{f \cdot (t_1, \dotsc, t_d) =t_1^{e_1} \cdots t_d^{e_d} f}$, and a decomposition of $R$ as a $K$-vector space $R \simeq \bigoplus_{\mathbf{e} \in \ZZ^d} R_\mathbf{e}$, which is a multigrading of $R$ by $\ZZ^d$. We say that elements of $R_{\mathbf{e}}$ have $\mathbb{Z}^d$-degree $\mathbf{e}$, or simply degree $\mathbf{e}$ when there is no chance of confusion.
A closed subvariety $\Spec(R/I) \subseteq X$ is $T$-stable exactly when $I$ is homogeneous with respect to this multigrading.

Let $Q$ be any quiver and let $\bd$ be a dimension vector for $Q$.  Let $T$ be the maximal torus of $\GL(\bd)$ consisting of matrices which are diagonal in each factor.  The restriction of the $\GL(\bd)$-action on $\mathtt{rep}_Q(\bd)$ to $T$ induces a multigrading on $K[\mathtt{rep}_Q(\bd)]$ as in the previous paragraph, which makes the ideals of orbit closures homogeneous.  Now we introduce notation to explicitly describe this for bipartite type $A$ quivers, deferring the case of arbitrary orientation until \S \ref{sect:arbitrary}.

To explicitly describe the multigrading of $K[\mathtt{rep}]$ in the bipartite orientation, we associate an (ordered) alphabet $\bs^j=s^j_1, s^j_2, \dotsc, s^j_{\bd(x_j)}$ to the vertex $x_j$, and an alphabet $\bt^i=t^i_1, t^i_2, \dotsc, t^i_{\bd(y_i)}$ to the vertex $y_i$.
Let $\bt$ be the sequence $\bt^0, \bt^1, \dotsc, \bt^n$ and $\bs$ be the sequence $\bs^n, \dotsc, \bs^2, \bs^1$ (the order in which the alphabets are concatenated is indicated, with each individual alphabet still in its standard order).
Identify $\ZZ^{d_x}$ with the free abelian group on $\bs$ and $\ZZ^{d_y}$ with the free abelian group on $\bt$, and $\ZZ^d = \ZZ^{d_x} \oplus \ZZ^{d_y}$, where $d_x, d_y, d$ are as in \S \ref{sect:Zmap}.  
Then in the induced multigrading of the coordinate ring $K[\mathtt{rep}]$
by $\ZZ^d$, the coordinate function which picks out the $(i,j)$
entry of $V_{\za_k}$ has degree $t^{k-1}_i - s^{k}_{j}$, and the
coordinate function picking out the $(i,j)$ entry of $V_{\zb_k}$ has
degree $t^{k}_i - s^{k}_{j}$.

Our multigrading of $K[\mcell]$ by $\ZZ^d$ is induced by right multiplication of the torus of diagonal matrices $T \subseteq \GL_d$ on $P\backslash Pv_0B_{-}$.  To explicitly describe the multigrading, recall the block structure for matrices in $\mcell$ introduced in \S \ref{sect:Zmap}.
The coordinate function on $\mcell$ picking out the entry in the $(i,j)$-location of the block with block row index $y_k$ and block column index $x_l$ has degree $t^k_i - s^l_j$. 
Figure \ref{fig:multigrading} illustrates a labeling of row and column blocks of $\mcell$ by the alphabets $\bt, \bs$: that is, the degree of a coordinate function picking out a matrix entry is its row label minus its column label. 

\begin{figure}
\[
\vcenter{\hbox{
\begin{tikzpicture}[every node/.style={minimum width=1em}]
\matrix (m0) [matrix of math nodes,left delimiter  = {[},%
             right delimiter = {]}, nodes in empty cells] at (0,0)
{
* & * & * & \bid_{\bd(y_0)} &  & \\
* &  * & *& & \bid_{\bd(y_1)} & \\
* & * & * & & & \bid_{\bd(y_2)}\\
* & * & * & & & & \bid_{\bd(y_3)} \\
\bid_{\bd(x_3)} &  & \\
& \bid_{\bd(x_{2})} &  & \\
& &\bid_{\bd(x_{1})} \\
};
\node  at (0,2.5) {$\bt^0$};
\node  at (1.2,2.5) {$\bt^1$};
\node  at (2.4,2.5) {$\bt^2$};
\node  at (3.6,2.5) {$\bt^3$};
\node  at (-1.2,2.5) {$\bs^1$};
\node  at (-2.4,2.5) {$\bs^2$};
\node  at (-3.6,2.5) {$\bs^3$};
\node  at (-5,2) {$\bt^0$};
\node  at (-5,1.4) {$\bt^1$};
\node  at (-5,0.8) {$\bt^2$};
\node  at (-5,0.2) {$\bt^3$};
\node  at (-5,-0.6) {$\bs^3$};
\node  at (-5,-1.3) {$\bs^2$};
\node  at (-5,-2) {$\bs^1$};
\node[scale=3] (zero) at (m0-6-2 -| m0-2-5.south east)  {$0$};
\draw[thick] (m0-5-1.north west) -- (m0-5-1.north west -| m0-4-7.south east);
\draw[thick] (m0-1-4.north west) -- (m0-1-4.north west |- m0-7-3.south east);
\end{tikzpicture}}}
\]
\caption{Alphabets assigned to each block in the multigrading of $K[\mcell]$}\label{fig:multigrading}
\end{figure}

We use the following easy-to-check result implicitly throughout the paper. It lets us use known formulas for $K$-polynomials and multidegrees of Kazhdan-Lusztig varieties to deduce results on $K$-polynomials and multidegrees of bipartite type $A$ quiver loci.
\begin{lemma}\label{lem:GLeqvt}
Let $\zeta^\sharp: K[\mcell]\rightarrow K[\rep]$ be the map on coordinate rings induced by the bipartite Zelevinsky map. If $z_{ij}\in K[\mcell]$ is the coordinate function on $\mcell$ which picks out an entry in the snake region of Definition \ref{def:snake}, then $z_{ij}$ and $\zeta^{\sharp}(z_{ij})$ have the same degree, with respect to the multigradings of $K[\rep]$ and $K[\mcell]$ defined above.
\end{lemma}
Let $X = \text{Mat}(k,l)$ with its natural right action of $\GL_k \times \GL_l$. A northwest matrix Schubert variety $X_w \subseteq X$ has an induced action of the diagonal torus $T=(K^\times)^k \times (K^\times)^l \subset \GL_k \times \GL_l$.  For an explicit description of the multigrading of $K[X_w]$, assign letters $\ba = a_1, \dotsc, a_k$ to the rows, from top to bottom, and letters $\bb=b_1, \dotsc, b_l$ to the columns, from left to right.  The coordinate function picking out the $(i,j)$ entry of a matrix in $X_w$ has degree $a_i - b_j$.  For southeast matrix Schubert varieties, the same torus acts, and thus the coordinate function picking out the $(i,j)$ entry of a matrix in $X^w$ also has degree $a_i - b_j$. 
We will use the following relationship between northwest and southeast matrix Schubert varieties. 
Here, and throughout the rest of the paper, the symbol $\tilde{\phantom{\bx}}$ over a finite alphabet denotes that the order of the alphabet is reversed, so that for $\bc = c_1,\dots, c_n$, we have $\tilde{\bc}= \tilde{c}_1, \dotsc, \tilde{c}_n$ with $\tilde{c}_i = c_{n-i+1}$.

\begin{lemma}\label{lem:SEmatrixschubert}
The operation $\rot$ of \S \ref{sect:matrices} induces an isomorphism of varieties $X_{\rot(w)}\to X^{w}$ and thus an isomorphism of rings $K[X^w] \to K[X_{\rot(w)}]$.
The isomorphism of rings is degree preserving after re-multigrading $K[X_{\rot(w)}]$ by replacing $\ba$ with $\tilde{\ba}$ and $\bb$ with $\tilde{\bb}$ so that the coordinate function picking out the $(i,j)$-entry of $M\in X_{\rot(w)}$ has degree $\tilde{a}_i-\tilde{b}_j$.
\end{lemma}

\comment{
\begin{proof}
The first part is clear. The second statement follows by observing that the map $\rot$ sending $X^{w}$ to $X_{\rot(w)}$ is 
$T = (K^\times)^k \times (K^\times)^l$ equivariant with respect to the morphism $\eta: T\rightarrow T$ given by 
\begin{equation}
((p_1,p_2\dots,p_k),(q_1,q_2\dots,q_l))\mapsto ((p_k,\dots, p_2,p_1),(q_l,\dots,q_2,q_1)).
\end{equation}
\end{proof}
}


\subsection{$K$-polynomials and Grothendieck polynomials}\label{sect:Kpolys}
We briefly introduce $K$-polynomials here, following \cite[Ch.~8]{MS}.  Geometrically, these represent classes in equivariant $K$-theory.
Let $S$ be a polynomial ring over $K$, multigraded by $\ZZ^d$.  Assume the multigrading is \emph{positive} (which is the case for all multigradings which appear before \S5 of this paper), meaning that the only elements of degree $\mathbf{0}$ are the constants in the base field $K$.  Let $\ba=a_1, \dotsc, a_d$ be an alphabet with $d$ letters, and identify $\ZZ^d$ with the free abelian group on $\ba$.   For each $\be = \sum e_i a_i \in \ZZ^d$, where $e_i \in \ZZ$, we define the monomial $\ba^\be = a_1^{e_1} \cdots a_d^{e_d}$.  Note that $\be$ can be recovered from $1-\ba^\be$ by substituting $1-a_i$ for each $a_i$, substituting the power series expansion $(1-a_i)^{-1} = \sum_{j\geq 0} a_i^j$ as necessary to get all positive exponents, and then $e_i$ is the coefficient of $a_i$.  
The expressions $\be$ and $\ba^\be$ are sometimes referred to as ``additive'' versus ``multiplicative'' notation in the literature.

The \emph{Hilbert series} of a finitely generated, multigraded $S$-module $M = \bigoplus_{\be\in \ZZ^d}M_{\be}$ is the expression $H(M; \ba) = \sum_{\be \in \ZZ^d} (\dim_K M_\be) \ba^\be$
in the additive group $\prod_{\be \in \ZZ^d} \ZZ \ba^\be$. 
The \emph{$K$-polynomial} of $M$ can be defined as
\begin{equation}\label{eq:KHilbert}
\cK_S(M; \ba) = \frac{H(M; \ba)}{H(S; \ba)}
\end{equation}
where $S$ is considered as a module over itself on the right hand side.  This ratio is actually a Laurent polynomial, meaning an element of $\bigoplus_{\be \in \ZZ^d} \ZZ \ba^\be$.  The $K$-polynomial can also be computed from a multigraded free resolution of $M$.  In this paper we are most interested in modules $M = S/I$ where $I$ is a homogeneous ideal in $S$; in this case, we sometimes write $\cK_X(Y; \ba)$ for $\cK_S (S/I; \ba)$, where $Y=\Spec(S/I) \subseteq X=\Spec(S)$.  

The \emph{multidegree} of $M$ is obtained from $\cK_S(M; \ba)$ by substituting $1-a_i$ for each variable $a_i$, substituting $(1-a_i)^{-1} = \sum_{j\geq 0} a_i^j$ as necessary to get positive exponents, then taking the terms of lowest total degree.  It generalizes the classical notion of degree to the multigraded setting, and when $M = S/I$, it can be geometrically interpreted as the class of $\Spec(S/I)$ in the equivariant Chow ring $A^*_T(\Spec(S))$.

Our formulas for $K$-polynomials of quiver loci will be in terms of the double Grothendieck polynomials of Lascoux and Sch\"utzenberger \cite{LS}, which we briefly review now, following the definitions in \cite{KM05}.
Let $\ba = a_1, a_2, \dotsc, a_m$ and $\bb =b_1, b_2, \dotsc, b_m$ be alphabets and $w_0$ the longest element of the symmetric group $S_m$. First we define
\[
\fG_{w_0}(\ba; \bb) = \prod_{i+j \leq m} \left(1 - \frac{a_i}{b_j}\right).
\]
The \emph{Demazure operator} $\overline{\partial_i}$ acts on polynomials in $\ba$ by
\[
\overline{\partial_i} f (a_i, a_{i+1}) = \frac{a_{i+1}f(a_i, a_{i+1}) - a_i f(a_{i+1}, a_i)}{a_{i+1} - a_i}
\]
where $f$ is written as a polynomial in only $a_i, a_{i+1}$ with the other variables considered as coefficients.  With this, we can inductively define the \emph{double Grothendieck polynomial} of a permutation $v \tau_i\in S_m$ as $\fG_{v \tau_i} (\ba; \bb) = \overline{\partial_i} \fG_{ v} (\ba; \bb)$ whenever $\ell(v \tau_i) < \ell(v)$.  This differs slightly from the definition in \cite{fultonlascoux}; their Grothendieck polynomials $G_v(\ba; \bb)$ are related to ours by the substitution $G_v(\ba; \bb) = \fG_v(\ba^{-1}; \bb^{-1})$, where $\ba^{-1}=a_1^{-1}, a_2^{-1}, \dotsc,a_m^{-1}$ and similarly for $\bb^{-1}$.  The \emph{double Schubert polynomial} $\fS_v (\ba; \bb)$ of a permutation $v$ is obtained from $\fG_{v} (\ba; \bb)$ by the same process that a multidegree is obtained from a $K$-polynomial.  

Double Grothendieck polynomials give $K$-polynomials of matrix Schubert varieties through the following theorem.  Note that this theorem was proven in the language of degeneracy loci in \cite[Thm.~2.1]{MR1932326}; the formulation below is \cite[Thm.~A]{KM05}.  The multidegree version appeared in \cite{Fulton}.

\begin{theorem}\label{thm:Kpolymatrixschubert}
Let $X=\Mat(m,m)$ and $v\in S_m$. 
The $K$-polynomial of the northwest matrix Schubert variety $X_v\subseteq X$ is the double Grothendieck polynomial
$\cK_{X} (X_v; \ba, \bb) = \fG_{v} (\ba; \bb)$, and its multidegree is $\fS_v(\ba; \bb)$.
\end{theorem}

If $w$ is a $k \times l$ partial permutation matrix, then only the variables $a_1, \dotsc, a_k$ and $b_1, \dotsc, b_l$ actually appear in $\fG_{c(w)}(\ba; \bb)$, so we may define $\fG_{w} (a_1, \dotsc, a_k; b_1, \dotsc, b_l) := \fG_{c(w)} (\ba; \bb)$.
Since $K$-polynomials are invariant under extension of an ideal in a polynomial ring to a larger polynomial ring, the relation between defining ideals $J_{c(w)} = J_w K[\Mat(m,m)]$ implies 
\begin{equation}\label{eq:Kpartialperm}
\cK_{\Mat(k,l)} (\Mat(k,l)_w; a_1, \dotsc, a_k, b_1, \dotsc, b_l)=  \fG_{w} (a_1, \dotsc, a_k; b_1, \dotsc, b_l).
\end{equation}

\comment{
Suppose $w$ is a $k \times l$ partial permutation matrix and $c(w)$ is $m\times m$. Since $K$-polynomials are invariant under extension of an ideal in a polynomial ring to a larger polynomial ring, the relation between defining ideals $J_{c(w)} = J_w K[\Mat(m,m)]$ implies 
\begin{equation}\label{eq:Kpartialperm}
\cK_{\Mat(k,l)} (\Mat(k,l)_w; a_1, \dotsc, a_k; b_1, \dotsc, b_l)
\end{equation}
}

\subsection{Pipe dreams}\label{sect:pipeDreams}
Our formulas use the language of pipe dreams, an introduction to which can be found in \cite[\S16.1]{MS}. These are the same as the RC-graphs originally introduced by Bergeron and Billey \cite{BB}, which are based on the pseudo-line arrangements introduced by Fomin and Kirillov to the study of Grothendieck and Schubert polynomials \cite{MR2307216,FK}.  The papers \cite{BRspec, wooyong} also give helpful treatments of the topic.

Consider a $k \times l$ grid of squares.  We use standard matrix terminology to refer to positions on the grid: horizontal strips of the squares are rows, vertical strips of squares are columns, and the square in row $i$ from the top and column $j$ from the left is labeled $(i,j)$.  A \emph{pipe dream} on this grid is a subset $P \subseteq \{1, \dotsc, k\} \times \{1, \dotsc, l \}$, visualized by tiling the grid using two kinds of tiles. For each $(i,j)$ in the grid, place the tile according to the rule below:
\[
\textcross\ \text{(cross)} \ \text{if }(i,j) \in P, \qquad \textturn\ \text{(elbow)}\ \text{if }(i,j) \not \in P.
\] 
We write $|P|$ for the cardinality of the subset $P$ (the number of cross tiles), and $\rot(P)$ for the pipe dream obtained by $180^\circ$ rotation of $P$.
Two examples of pipe dreams on a $7 \times 7$ grid are in Figure \ref{fig:pipes}; we have $|P_1| = 27$ and $|P_2| = 28$.  The row and column labels by $\bt, \bs$ come from identification of this grid with the northwest quadrant of $\mcell$ as in Figure \ref{fig:multigrading}. 
The outline of the snake region and colors of the pipes are a visual aide for the combinatorics relating lacing diagrams and pipe dreams developed in \S \ref{sect:pipeToLace}.
\begin{figure}
$P_1 = \vcenter{\hbox{\begin{tikzpicture}[scale=1,>=latex]
    \pipedream{0.5}{(0,0)}{$s^3_1$,$s^3_2$,$s^2_1$,$s^2_2$,$s^2_3$,$s^1_1$,$s^1_2$}
    {$t^0_1$,$t^1_1$,$t^1_2$,$t^2_1$,$t^2_2$,$t^3_1$,$t^3_2$}
    {
      1/1/1/pink/pink,1/2/1/pink/pink,1/3/1/pink/pink,1/4/1/pink/pink,1/5/1/pink/pink,
      2/1/1/pink/pink,2/2/1/pink/pink,
      3/1/1/pink/pink,3/2/1/pink/pink,
      4/6/1/pink/pink,4/7/1/pink/pink,
      5/6/1/pink/pink,5/7/1/pink/pink,
      6/3/1/pink/pink,6/4/1/pink/pink,6/5/1/pink/pink,6/6/1/pink/pink,6/7/1/pink/pink,
      7/3/1/pink/pink,7/4/1/pink/pink,7/5/1/pink/pink,7/6/1/pink/pink,7/7/1/pink/pink,
      1/6/0/pink/red,1/7/0/red/black,
      2/3/0/pink/pink,2/4/0/pink/black,2/5/1/red/black,2/6/0/black/black,2/7/0/black/pink,
      3/3/0/pink/black,3/4/1/black/black,3/5/1/red/black,3/6/0/black/pink,3/7/0/pink/pink,
      4/1/0/pink/pink,4/2/0/pink/black,4/3/0/black/black,4/4/0/black/red,4/5/0/red/pink,
      5/1/0/pink/black,5/2/0/black/black,5/3/0/black/red,5/4/0/red/pink,5/5/0/pink/pink,
      6/1/0/black/red,6/2/0/red/red,
      7/1/1/red/red,7/2/0/red/pink}

    \latticepath{0.5}{(3.5,-0.5)}{very thick}{
     0/-2/black,-2/0/black,0/-2/black,-3/0/black,0/-2/black,-2/0/black,0/2/white,0/2/black,2/0/black,0/2/black,3/0/black,0/1/black,2/0/black}
    \latticepath{0.5}{(3.5,0)}{dashed}{
    0/-1/black,-2/0/black,0/-2/black,-3/0/black,0/-2/black,-2/0/black,0/-2/black}
  \end{tikzpicture}}}$
  \qquad \qquad
$P_2 = \vcenter{\hbox{\begin{tikzpicture}[scale=1,>=latex]
    \pipedream{0.5}{(0,0)}{$s^3_1$,$s^3_2$,$s^2_1$,$s^2_2$,$s^2_3$,$s^1_1$,$s^1_2$}
    {$t^0_1$,$t^1_1$,$t^1_2$,$t^2_1$,$t^2_2$,$t^3_1$,$t^3_2$}
    {
      1/1/1/pink/pink,1/2/1/pink/pink,1/3/1/pink/pink,1/4/1/pink/pink,1/5/1/pink/pink,
      2/1/1/pink/pink,2/2/1/pink/pink,
      3/1/1/pink/pink,3/2/1/pink/pink,
      4/6/1/pink/pink,4/7/1/pink/pink,
      5/6/1/pink/pink,5/7/1/pink/pink,
      6/3/1/pink/pink,6/4/1/pink/pink,6/5/1/pink/pink,6/6/1/pink/pink,6/7/1/pink/pink,
      7/3/1/pink/pink,7/4/1/pink/pink,7/5/1/pink/pink,7/6/1/pink/pink,7/7/1/pink/pink,
      1/6/0/pink/red,1/7/0/red/black,
      2/3/0/pink/pink,2/4/0/pink/black,2/5/1/red/black,2/6/0/black/black,2/7/0/black/pink,
      3/3/0/pink/black,3/4/1/black/black,3/5/1/red/black,3/6/0/black/pink,3/7/0/pink/pink,
      4/1/0/pink/pink,4/2/0/pink/black,4/3/0/black/black,4/4/0/black/red,4/5/0/red/pink,
      5/1/0/pink/black,5/2/0/black/black,5/3/0/black/red,5/4/0/red/pink,5/5/0/pink/pink,
      6/1/0/black/red,6/2/1/red/red,
      7/1/1/red/red,7/2/0/red/pink}

    \latticepath{0.5}{(3.5,-0.5)}{very thick}{
     0/-2/black,-2/0/black,0/-2/black,-3/0/black,0/-2/black,-2/0/black,0/2/white,0/2/black,2/0/black,0/2/black,3/0/black,0/1/black,2/0/black}
    \latticepath{0.5}{(3.5,0)}{dashed}{
    0/-1/black,-2/0/black,0/-2/black,-3/0/black,0/-2/black,-2/0/black,0/-2/black}
  \end{tikzpicture}}}$
    \caption{Pipe dreams related to the running example}
\label{fig:pipes}
\end{figure}

A pipe dream $P$ determines a word in the Coxeter generators $\tau_1, \tau_2, \dotsc$ of the symmetric group on $\ZZ_{> 0}$, as follows.  Starting at the top right of the grid, reading first along a row from right to left, then proceeding down to the next row, one writes the letter $\tau_{i+j-1}$ whenever $(i,j) \in P$.  The word is written left to right.  Evaluating this word using the relations
\begin{equation}
\tau_i^2 = \tau_i, \qquad \tau_i \tau_{i+1} \tau_i = \tau_{i+1} \tau_i \tau_{i+1}, \qquad \tau_i \tau_j = \tau_j \tau_i, \text{ for } |i-j| \geq 2
\end{equation}
yields a permutation $\delta(P)$ known as the \emph{Demazure product} of $P$.
Given a permutation $v$, we say $P$ \emph{is a pipe dream for} $v$ if $\delta(P)=v$.
If $|P| = \ell(\delta(P))$, then $P$ is \emph{reduced}.  
Note that $\delta$ is order-preserving in the sense that $P' \subseteq P$ implies that $\delta(P') \leq \delta(P)$ in the Bruhat order, and that every $P$ contains a reduced $P' \subseteq P$ such that $\delta(P')=\delta(P)$.
Both pipe dreams in Figure \ref{fig:pipes} have Demazure product equal to the Zelevinsky permutation in Figure \ref{fig:Zperm}.  
The pipe dream on the left is reduced; the one on the right not, 
because $|P_2|=28$
while $\ell(\delta(P)) = 27$. 
We also see two pipes crossing twice in the bottom square of $P_2$, which does not happen in reduced pipe dreams.

We define a $k \times l$ partial permutation matrix $w(P)$ associated to a reduced pipe dream $P$ on a $k \times l$ grid by \emph{following the pipes} from the left boundary to the top boundary. That is, $w(P)$ has a 1 in position $(i,j)$ precisely when $P$ has a pipe connecting row $i$ of its left boundary to column $j$ of its upper boundary.
The following lemma is straightforward and its proof is omitted.

\begin{lemma}\label{lem:pipefacts}
Let $P$ be a reduced pipe dream on a $k \times l$ grid and let $w=w(P)$.  Then $\delta(P)=c(w)$ as permutation functions in $S_{k+l}$ 
if and only if no pair of pipes passing through the bottom of the grid cross, nor do any pair of pipes passing through the right of the grid.
\end{lemma}

In this paper we are primarily concerned with pipe dreams on a $d\times d$ grid for which all cross tiles lie in the northwest $d_y \times d_x$ rectangle (e.g., Figure \ref{fig:pipes}).  We identify this $d_y \times d_x$ rectangle with the northwest quadrant in Figure \ref{fig:multigrading} and only draw this rectangle.
 For $v \in S_d$, denote by $\Pipes(v_0, v)$ the set of pipe dreams $P$ for $v$ such that $P$ is a subset of this northwest quadrant, and define $\RedPipes(v_0, v)$ to consist of the reduced pipe dreams in $\Pipes(v_0,v)$.

Fix a $k \times l$ grid which will be tiled to produce a pipe dream. Let $\ba$ and $\bb$ be alphabets indexing the rows and columns of the grid, respectively.  For a pipe dream $P$, define
\[
(\one - \ba / \bb)^P = \prod_{(i,j) \in P}(1-a_i / b_j) \quad \text{and} \quad (\ba - \bb)^P = \prod_{(i,j) \in P}(a_i - b_j).
\]
The following formulas for $K$-polynomials of Kazhdan-Lusztig varieties are special cases of those presented in \cite[Thm.~4.5]{wooyong}.  These formulas previously appeared in other forms in \cite{AJS, billey, graham, willems}.

\begin{theorem}\label{thm:schubertpipe}
The $K$-polynomial of the Kazhdan-Lusztig variety $Y_v$ inside $\mcell$ is given by each of the following two formulas:
\begin{equation}\label{eq:wooyong1}
\mathcal{K}_{\mcell}(Y_v; \bs, \bt) = \sum_{P \in \Pipes(v_0,v)} (-1)^{|P|-\ell(v)}(\one - \bt / \bs)^{P}
\end{equation}
\begin{equation}\label{eq:wooyong2}
\mathcal{K}_{\mcell}(Y_v; \bs,\bt) = \fG_{ v}(\bt,\bs; \bs,\bt).
\end{equation}
\end{theorem}


\section{Bipartite ratio and pipe formulas}
We introduce a shorthand for the $K$-polynomials studied in this section and the next, where we continue to omit the fixed bipartite quiver $Q$ of type $A$ and dimension vector $\bd$ from the notation.

\begin{definition}\label{def:quiverPoly}
Let $\zO$ be a quiver locus for a bipartite type $A$ quiver $Q$. The \emph{$K$-theoretic quiver polynomial} $K\cQ_{\zO}(\bt/\bs)$ (resp., \emph{quiver polynomial} $\cQ_\zO (\bt - \bs)$) is the $K$-polynomial (resp., multidegree) of $\zO$ with respect to its inclusion in $\rep$ and multigrading of \S \ref{sect:multigrading}. 
\end{definition}

\subsection{Bipartite ratio formula}
We include a proof of the ratio formula for completeness, though it follows easily from our construction of the bipartite Zelevinsky map using the same argument as the equioriented case \cite[Thm.~2.7]{KMS}.

\begin{theorem}[Bipartite ratio formula]\label{thm:biratio}
For any bipartite type $A$ quiver locus $\zO$, we have
\begin{equation}\label{eq:bipartiteratio}
K\cQ_\zO(\bt / \bs) = \frac{\fG_{v(\zO)}(\bt,\bs; \bs,\bt) }{\fG_{v_*}(\bt,\bs; \bs,\bt) }.
\end{equation}
\end{theorem}

\begin{proof} 
Omitting the variables $\bs, \bt$ throughout the proof, we have the following equations relating Hilbert series to $K$-polynomials from \eqref{eq:KHilbert}:
\[
H(\zO) = \cK_{\mathtt{rep}} (\zO) H(\mathtt{rep}), \quad H(\zO) = \cK_{\mcell} (\zeta(\zO)) H( \mcell), \quad H(\mathtt{rep}) = \cK_{\mcell} (\zeta(\mathtt{rep})) H(\mcell).
\]
Substituting the second two equations above into the first and rearranging yields
\begin{equation} \label{eq:ratioproof}
K\cQ_{\zO}(\bt/\bs) = \cK_{\mathtt{rep}} (\zO) = \frac{\cK_{\mcell}(\zeta(\zO))}{\cK_{\mcell}(\zeta(\mathtt{rep}))}.
\end{equation}
Now $\zeta(\zO) =  Y_{v(\zO)}$ and $\zeta(\mathtt{rep}) = Y_{v_*}$ by Theorem \ref{thm:Zmap}, so  
\eqref{eq:wooyong2} implies the result.
\end{proof}


\subsection{Bipartite pipe formula}\label{sect:bipipedream}

Next, we provide a formula for $K\cQ_\zO (\bt / \bs)$ in terms of pipe dreams. To begin, we show that all pipe dreams in $\Pipes(v_0,v(\zO))$ contain a particular pipe dream as a subset: define $P_*$ to be the pipe dream on a $d \times d$ grid which only has cross tiles in the northwest $d_y \times d_x$ rectangle, and within this rectangle there is a cross tile in location $(i,j)$ if and only if $(i,j)$ lies outside the snake region (Definition \ref{def:snake}).

\begin{lemma}\label{lem:homsubdiag}
  Each $P\in \Pipes(v_0,v(\zO))$ contains $P_*$ as a subset. In particular, $\Pipes(v_0, v_*) = \{P_*\}$ for the Zelevinsky permutation 
  $v_*$ associated to the entire representation space $\rep$.
\end{lemma}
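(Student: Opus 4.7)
The plan is to prove the second statement, $\Pipes(v_0,v_*)=\{P_*\}$, first and then deduce the first from it via the subword property of Bruhat order. The key observation is that $Y_{v_*}$ is an especially simple Kazhdan-Lusztig variety: a linear coordinate subspace of $\mcell$, so its $K$-polynomial admits a direct Koszul computation.

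First I would verify that $I_{v_*}$ is the ideal generated by the coordinate functions $z_{ij}$ at positions $(i,j)\in\cD_{v_0}$ lying outside the snake region. By Theorem \ref{thm:Zmap} we have a scheme-theoretic isomorphism $\zeta\colon\rep\xrightarrow{\sim}Y_{v_*}$, and from the explicit matrix form of $\zeta(V)$ in Figure \ref{fig:bigzimage} the image is cut out of $\mcell$ exactly by the vanishing of the NW-quadrant entries at non-snake positions. Hence $Y_{v_*}$ is a linear subspace of $\mcell$ of codimension $|P_*|$, which must equal $\ell(v_*)$, and the Koszul resolution of $K[\mcell]/I_{v_*}$ yields
\[
\mathcal{K}_{\mcell}(Y_{v_*};\bs,\bt)=(\one-\bt/\bs)^{P_*}.
\]

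I would then match this with the pipe-dream formula \eqref{eq:wooyong1}, producing the identity
\[
(\one-\bt/\bs)^{P_*}=\sum_{P\in\Pipes(v_0,v_*)}(-1)^{|P|-\ell(v_*)}(\one-\bt/\bs)^{P}.
\]
Because the variables $z_{ij}$ in the NW quadrant have pairwise distinct multidegrees (Figure \ref{fig:multigrading}), expanding $(\one-\bt/\bs)^{P}=\sum_{S\subseteq P}(-1)^{|S|}\prod_{(i,j)\in S}(\bt/\bs)_{ij}$ produces distinct Laurent monomials for distinct $S\subseteq\cD_{v_0}$. A M\"obius inversion on the coefficients of these monomials shows that $\{(\one-\bt/\bs)^{P}:P\subseteq\cD_{v_0}\}$ is $\ZZ$-linearly independent, so the identity above forces $\Pipes(v_0,v_*)=\{P_*\}$.

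For the first assertion, fix $P\in\Pipes(v_0,v(\br))$. The inclusion $\zO_\br\subseteq\rep$ gives $Y_{v(\br)}\subseteq Y_{v_*}$ via Theorem \ref{thm:Zmap}, so $v_*\le v(\br)$ in Bruhat order. Identifying subsets of $\cD_{v_0}$ with subwords of the reading word $\bq$ of $\cD_{v_0}$ (a reduced expression for $v_0$), the subword determined by $P$ has Demazure product $\delta(P)=v(\br)\ge v_*$; by the subword property of Bruhat order this subword contains a reduced expression for $v_*$, yielding a subset $P'\subseteq P$ with $\delta(P')=v_*$ and $|P'|=\ell(v_*)$, i.e.\ $P'\in\Pipes(v_0,v_*)$. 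By the second statement $P'=P_*$, and therefore $P_*\subseteq P$. The main subtlety will be the identification of $I_{v_*}$ with the linear ideal of non-snake variables in the first step; once this is in hand, the remaining work is combinatorial bookkeeping using the two $K$-polynomial formulas and the standard subword property.
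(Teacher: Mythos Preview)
Your argument for the second statement has a genuine gap: the claimed linear independence of the family $\{(\one-\bt/\bs)^{P}:P\subseteq\cD_{v_0}\}$ is false. The problem is precisely at the step where you assert that distinct subsets $S\subseteq\cD_{v_0}$ produce distinct Laurent monomials $\prod_{(i,j)\in S}(\bt/\bs)_{ij}$. While individual positions $(i,j)$ do have distinct multidegrees $t^k_i-s^l_j$, \emph{sums} of these over subsets need not be distinct. Already in a single $2\times 2$ block with row letters $t_1,t_2$ and column letters $s_1,s_2$, the subsets $S_1=\{(1,1),(2,2)\}$ and $S_2=\{(1,2),(2,1)\}$ give the same monomial $t_1t_2/(s_1s_2)$. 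Your M\"obius inversion therefore does not yield linear independence, and in fact one checks directly that
\[
f_{\{(1,1),(2,2)\}}-f_{\{(1,2),(2,1)\}}-f_{\{(1,1)\}}-f_{\{(2,2)\}}+f_{\{(1,2)\}}+f_{\{(2,1)\}}=0,
\]
where $f_P:=(\one-\bt/\bs)^P$. So the $K$-polynomial identity alone cannot pin down the set $\Pipes(v_0,v_*)$.

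The paper avoids this issue by arguing geometrically rather than via coefficient matching. It first proves the containment $P_*\subseteq P$ for all $P\in\Pipes(v_0,v(\br))$ directly: since each non-snake variable $z_{ij}$ lies in $I_{v(\br)}$, it lies in $\mathrm{in}_\prec I_{v(\br)}$, and Woo--Yong's identification $\mathrm{in}_\prec I_{v(\br)}=\bigcap_{P\in\RedPipes(v_0,v(\br))}\langle z_{ij}:(i,j)\in P\rangle$ forces $(i,j)\in P$ for every reduced $P$; non-reduced $P$ then contain $P_*$ because they contain a reduced one. For the second statement, $I_{v_*}$ being already a monomial ideal makes $P_*$ the unique reduced pipe dream, and the fact that every pipe dream is contained in the union of the reduced ones then forces $\Pipes(v_0,v_*)=\{P_*\}$. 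Your deduction of the first statement from the second via the subword property of Bruhat order is correct and is a pleasant alternative to the paper's direct argument for that part, but it still depends on having the second statement in hand.
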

\begin{proof}
The chain of closed subvarieties $\zeta(\zO) \subseteq \zeta(\rep) \subseteq \mcell$ induces a chain of ideals $I_{v_*} \subseteq I_{v(\zO)} \subseteq K[\mcell]\simeq K[z_{ij}]$, where $\{z_{ij}\}$ are the coordinate functions picking out matrix entries which are non-constant on $\mcell$.  Consider the monomial order $\prec$ on $K[\mcell]$ defined by $z_{ij} \prec z_{kl}$ when either $j < l$, or $j=l$ and $k < i$.
Then we have
\begin{equation}\label{eq:pstarequation}
I_{v_*}= \langle z_{ij}\mid (i,j)\in P_*\rangle = \init_\prec I_{v_*} \subseteq \textrm{in}_\prec I_{v(\zO)} = \bigcap_{P\in \RedPipes(v_0,v(\zO))}\langle z_{ij}\mid (i,j)\in P\rangle
\end{equation} 
where the second equality is because $I_{v_*}$ is generated by linear monomials (thus equal to its initial ideal with respect to any monomial order), and the last equality is \cite[Thm~3.2]{wooyong}.  This shows that every $P\in \RedPipes(v_0,v(\zO))$ contains $P_*$.
Since every (possibly nonreduced) pipe dream contains a reduced pipe dream for the same permutation as a subset, we obtain the first statement of the lemma.
For the second statement, take $\zO = \rep$ on the right hand side of \eqref{eq:pstarequation}.  Since this is a minimal prime decomposition of $I_{v_*}$, which is already prime, we find that $P_*$ is the unique element of $\RedPipes(v_0,v_*)$.  But every element of $\Pipes(v_0,v(\zO))$ is a union of elements of $\RedPipes(v_0,v(\zO))$ by \cite[Lem.~2]{MR2137947}, so we see that $\Pipes(v_0,v_*) = \{P_*\}$.
\end{proof}

\begin{theorem}[Bipartite pipe formula]\label{thm:bipipe}
For any bipartite type $A$ quiver locus $\zO$, we have
\begin{equation}\label{eq:Kbipipe}
K\cQ_\zO (\bt / \bs)= \sum_{P \in \Pipes(v_0,v(\zO))} (-1)^{|P\setminus P_*|-\codim{\zO}}(\one - \bt / \bs)^{P \setminus P_*}.
\end{equation}
\end{theorem}

\begin{proof}
Express the numerator and denominator of \eqref{eq:ratioproof} using \eqref{eq:wooyong1}, noting that the denominator has only one term by Lemma \ref{lem:homsubdiag}:
\begin{equation}
K\cQ_\zO (\bt / \bs) = \frac{\sum_{P \in \Pipes(v_0,v(\zO))} (-1)^{|P|-\ell(v(\zO))}(\one - \bt / \bs)^P}{(\one - \bt / \bs)^{P_*}}.
\end{equation}
By the same lemma, $(\one - \bt / \bs)^{P_*}$ divides each summand in the numerator. To obtain \eqref{eq:Kbipipe} it remains to check that $|P|-\ell(v(\zO)) = |P\setminus P_*|-\codim{\zO}$. This follows from \eqref{eq:Zpermlength} together with the equality $\ell(v_*) = |P_*|$.
\end{proof}

\section{Degeneration and bipartite component formula}\label{sect:degenAndComponent}
In this section we prove the bipartite $K$-theoretic component formula (Theorem \ref{thm:bicomponent}).  
We Gr\"obner degenerate a bipartite type $A$ quiver locus  to a reduced union of products of matrix Schubert varieties (Theorem \ref{thm:lacedegen}); this leaves the $K$-polynomial invariant, 
then we show that the bipartite $K$-theoretic component formula computes the $K$-polynomial of this degeneration.
The degeneration is discussed in \S\S \ref{sect:degensetup} through \ref{sect:initReduced} and \S \ref{sect:componentproof}, while the $K$-polynomial computation appears in \S \ref{sect:Kcomponent}. The combinatorial results of \S \ref{sect:pipeToLace} are key ingredients in the proofs of Theorems \ref{thm:lacedegen} and \ref{thm:bicomponent}.

\subsection{General setup for degeneration}\label{sect:degensetup}

Given a variety $X$ with a right action of $K^\times$, any closed subvariety $Y \subseteq X$ determines a family $\tilde{Y} \subseteq X \times \mathbb{A}^1$ as follows. We take 
\begin{equation}\label{eq:Yfamily}
\tilde{Y}^\circ = \setst{(y \cdot t, t)}{y \in Y,\ t\in K^\times}.
\end{equation}
The family $\tilde{Y}$ is defined to be the closure of $\tilde{Y}^\circ$ in $X \times \mathbb{A}^1$  (where we identify $K^\times = \Spec K[t, t^{-1}] \subset \AA^1$).  
By construction the projection $\tilde{Y} \to \AA^1$ is $K^\times$-equivariant
with respect to the scaling action on $\AA^1$.
We denote the fiber over a point $t \in \mathbb{A}^1$ by $\tilde{Y}(t)$, 
which by the $K^\times$-equivariance is equal to $Y \cdot t \simeq Y$ for $t \neq 0$.

\subsubsection*{Gr\"obner degenerations} 
Let $S := K[x_1,\dots, x_m]$. Consider the case $X = \Spec S$ with a right $K^\times$-action, and $Y = \Spec S/I$ where $I\subseteq S$ is a homogeneous ideal. Using the induced right action of $K^\times$ on $S$, the family $\tilde{Y}$ is seen to be $\Spec S[t]/\tilde{I}$, where $\tilde{I}$ is the contraction of the ideal $\langle f\cdot t \mid f \in I \rangle \subseteq S[t, t^{-1}]$ to $S[t]$. In this subsection we show that the family $\tilde{Y}$ is flat over $\mathbb{A}^1 = \Spec K[t]$ and that the \emph{special fiber} $\tilde{Y}(0)$ can be computed by taking an \emph{initial ideal} of $I$. In this context, we say that $Y$ \emph{Gr\"obner degenerates} to $\tilde{Y}(0)$.

Let $\zl_1, \dotsc, \zl_m \in \ZZ$ with $K^\times$ acting on points of $X$ by $(p_1,\dots, p_m) \cdot t= (t^{\lambda_1}p_1,\dots, t^{\lambda_m}p_m)$.  The action determines an \emph{integral weight function} $\zl \colon \ZZ^m \to \ZZ$ defined by $\zl(\ba) := \sum_i \zl_i a_i$ for $\ba = (a_1, \dotsc, a_m) \in \ZZ^m$.
The induced right action of $K^\times$ on the coordinate ring $S$ acts on a monomial $\mathbf{x}^\mathbf{a} := x_1^{a_1}x_2^{a_2}\cdots x_m^{a_m}$  by $\bx^{\ba}\cdot t = t^{-\zl(\ba)} \bx^\ba$.  We define\footnote{This follows the sign convention of \cite[15.8]{eisenbud}. Since this torus action is used only for degeneration, and not a multigrading, there is no potential conflict with the convention of \S\ref{sect:multigrading}.} $\lambda(\bx^\ba) := \lambda(\ba)$, and say that $\zl$ is \emph{an integral weight function for $S$} and that the \emph{weight} of $x^\mathbf{a}$ is $\lambda(\mathbf{a})$. 
The original $K^\times$-action is easily recovered from $\lambda$, so the ideal $\tilde{I}\subseteq S[t]$, which defines the family $\tilde{Y}$, can be computed from $\lambda$. We refer to $\tilde{I}$ as the $\lambda$-\emph{homogenization of} $I$.

The \emph{initial form} $\init_{\lambda} f$ of a polynomial $f\in S$ is the sum of the terms of $f$ of highest $\lambda$-weight. The \emph{initial ideal} $\init_\lambda I$ of an ideal $I\subseteq S$ is $\init_\lambda I :=\langle \init_\lambda f\mid f\in I\rangle$.
We also use standard facts about \emph{monomial orders} $<$ on $S$ and their \emph{initial ideals} $\init_<I := \langle \text{in}_< f\mid f\in I\rangle$ (see \cite[Ch.~15]{eisenbud}).  For $Y = \Spec S/I$ and a monomial order $<$ on $S$, we write $\init_< Y :=\Spec(S/\init_< I)$.

\begin{theorem}\label{thm:grobnerDegeneration}
Let $I\subseteq S := K[x_1,\dots, x_m]$ be a homogenous ideal with respect to the standard grading. Let $\lambda\colon\mathbb{Z}^m\rightarrow \mathbb{Z}$ be an integral weight function. The following hold:
\begin{enumerate}[(a)]
\item $S[t]/\tilde{I}$ is a free (and thus flat) $K[t]$-module;
\item the fiber over $t\neq 0$ is isomorphic to $S/I$;
\item the fiber over $t=0$ is $S/\init_\lambda I$.
\end{enumerate}
\end{theorem}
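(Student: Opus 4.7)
The plan is to handle the three parts in the order stated, with the flatness in part (1) as the main obstacle. Throughout, write $\tilde{f} := t^{b(f)} f(t^{-\lambda(x_1)}x_1, \ldots, t^{-\lambda(x_n)}x_n)$ for the homogenization of $f \in I$, so that $\tilde{I}$ is generated by the $\tilde{f}$ as $f$ ranges over $I$.

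For part (2), fix $a \in K^\times$ and consider the $K$-algebra automorphism $\phi_a$ of $K[x_1,\dots,x_n]$ defined by $x_i \mapsto a^{\lambda(x_i)} x_i$. A direct computation gives $\tilde{f}(x,a) = a^{b(f)}\, \phi_a^{-1}(f)$, which differs from $\phi_a^{-1}(f)$ only by a nonzero scalar. Thus $\tilde{I}(a) = \phi_a^{-1}(I)$, and $\phi_a$ descends to an isomorphism $K[x]/\tilde{I}(a) \xrightarrow{\sim} K[x]/I$.

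For part (3), I would compute $\tilde{I}(0)$ on generators. Writing $f = \sum_k c_k m_k$ gives $\tilde{f} = \sum_k c_k\, t^{b(f) - \lambda(m_k)} m_k$, with the exponent $b(f) - \lambda(m_k)$ nonnegative and equal to zero exactly when $m_k$ is a maximal-weight monomial of $f$. Setting $t = 0$ kills every term except those of top $\lambda$-weight, leaving $\tilde{f}(x, 0) = \mathrm{in}_\lambda(f)$. Since $\mathrm{in}_\lambda(I)$ is generated by the $\mathrm{in}_\lambda(f)$, this yields $\tilde{I}(0) = \mathrm{in}_\lambda(I)$.

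For part (1), the hard part, the plan is to exhibit an explicit $K[t]$-module basis of $K[x,t]/\tilde{I}$. Choose an auxiliary monomial order $\prec$ on $K[x]$ and refine the weight $\lambda$ into a monomial order $\prec_\lambda$ by breaking $\lambda$-weight ties using $\prec$. Let $B$ be the set of monomials of $K[x]$ that do not lie in $\mathrm{in}_{\prec_\lambda}(I)$; standard Gr\"obner theory makes $B$ a $K$-basis of $K[x]/I$. Because $\prec_\lambda$ refines $\lambda$, one checks that $\mathrm{in}_{\prec_\lambda}(\mathrm{in}_\lambda(I)) = \mathrm{in}_{\prec_\lambda}(I)$, and, using part (2), that $\mathrm{in}_{\prec_\lambda}(\tilde{I}(a)) = \mathrm{in}_{\prec_\lambda}(I)$ for $a \neq 0$; hence $B$ is in fact a $K$-basis of $K[x]/\tilde{I}(a)$ for every $a \in K$. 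I then claim $B$ is a $K[t]$-basis of $K[x,t]/\tilde{I}$. Linear independence follows by fibrewise specialization: a relation $\sum_\alpha h_\alpha(t)\, x^\alpha \in \tilde{I}$ specializes at each $a \in K$ to an element of $\tilde{I}(a)$, forcing $h_\alpha(a) = 0$ for every $\alpha$ and $a$, hence $h_\alpha = 0$. Spanning requires the key technical input that a reduced Gr\"obner basis $\{g_i\}$ of $I$ with respect to $\prec_\lambda$ lifts to a Gr\"obner basis $\{\tilde{g}_i\}$ of $\tilde{I}$ over $K[t]$ with leading monomials $\{\mathrm{in}_{\prec_\lambda}(g_i)\}$; this enables a division algorithm modulo $\tilde{I}$ that reduces any element of $K[x,t]$ to a $K[t]$-combination of elements of $B$. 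Freeness of $K[x,t]/\tilde{I}$ as a $K[t]$-module then implies flatness. The principal obstacle is precisely this Gr\"obner-basis lift, namely verifying that the $\tilde{g}_i$ generate $\tilde{I}$ and that their leading terms, under the obvious extension of $\prec_\lambda$ to $K[x,t]$ treating $t$ as a coefficient, coincide with those of the $g_i$.
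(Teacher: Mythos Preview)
Your arguments for parts (2) and (3) are fine. The gap is in part (1), and it is exactly the issue the paper singles out.

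You write: ``refine the weight $\lambda$ into a monomial order $\prec_\lambda$ by breaking $\lambda$-weight ties using $\prec$.'' This step fails in general. A monomial order must satisfy $1 \preceq m$ for every monomial $m$ (equivalently, it must be a well-order). If $\lambda(x_i) < 0$ for some $i$, then under your $\prec_\lambda$ one has $x_i \prec_\lambda 1$ and the chain $1 \succ_\lambda x_i \succ_\lambda x_i^2 \succ_\lambda \cdots$ never terminates; so $\prec_\lambda$ is not a monomial order and the Gr\"obner-basis machinery you invoke (initial ideals, standard monomials, division with remainder, the lift of a reduced Gr\"obner basis) is not available. Notice that you never use the hypothesis that $I$ is homogeneous for the standard grading; if your argument worked as written it would prove the result for arbitrary $\lambda$ and arbitrary $I$, which is not what is being claimed.

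The paper does not attempt a direct proof. Instead it reduces to \cite[Theorem~15.17]{eisenbud}, whose proof already requires that the $\lambda$-weight order refine to a genuine monomial order. The reduction is the one-line observation that, because $I$ is homogeneous in the standard grading, one may replace $\lambda$ by $\mu := \lambda + N\cdot(\text{standard degree})$ for $N \gg 0$: for any standard-homogeneous $f$ one has $\mathrm{in}_\lambda(f) = \mathrm{in}_\mu(f)$ and $\tilde{f}^{\,\lambda} = \tilde{f}^{\,\mu}$, so $\mathrm{in}_\lambda(I) = \mathrm{in}_\mu(I)$ and the two homogenizations of $I$ coincide. Now $\mu(x_i) > 0$ for all $i$, so $\mu$ \emph{does} refine to a monomial order and Eisenbud's theorem applies. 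Your argument can be repaired in exactly this way: carry out the shift from $\lambda$ to $\mu$ first, and then run your free-basis argument with $\prec_\mu$ in place of $\prec_\lambda$. What your write-up is missing is precisely this use of the homogeneity hypothesis.
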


\begin{proof}
This theorem is a variation of \cite[Thm.~15.17]{eisenbud}, which does not assume that $I$ is homogeneous, but whose proof assumes that the weight order induced by $\lambda$ can be refined to a monomial order.  Let $N$ be any integer so that $\lambda(x_i)+N>0$ for all $1\leq i\leq m$, and define a second weight function $\mu:\mathbb{Z}^m\rightarrow \mathbb{Z}$ with $\mu(x_i) = \lambda(x_i)+N$. 
Now let $f \in S$ be homogeneous with respect to the standard grading by degree.  Then $\init_{\lambda} f = \init_{\mu} f$, so $\init_\lambda I = \init_{\mu} I$ for any ideal $I$ which is homogeneous with respect to the standard grading. Furthermore, the $\lambda$-homogenization and $\mu$-homogenization of such an ideal are identical.  Now since $\mu(x_i) > 0$ for all $i$, the weight order induced by $\mu$ can be refined to a monomial order by breaking ties $\mu(\mathbf{x}^\mathbf{a}) = \mu(\mathbf{x}^\mathbf{b})$ with a lexicographical order on the variables.  Applying \cite[Thm.~15.17]{eisenbud} to $\mu$ gives the statements of this theorem for $\lambda$.
\end{proof}

We record two easy but useful facts for reference later.

\begin{lemma}\label{lem:degenfacts}
Let $S, I, \zl$ be as in Theorem \ref{thm:grobnerDegeneration}. 
\begin{enumerate}[(a)]
\item If $S$ is positively multigraded and $I\subseteq S$ is homogeneous with respect to this multigrading, then the $K$-polynomials of $S/I$ and $S/\init_\lambda I$ are equal.
\item If $\Spec(S/I)$ is equidimensional, then the dimension of any irreducible component of the induced reduced subscheme of $\Spec(S/\init_\lambda I)$ is equal to the dimension of $\Spec(S/I)$.
\end{enumerate}
\end{lemma}
\begin{proof}
Part (a) is proven exactly as in \cite[Thm.~8.36]{MS}, except that in the sections preceding the proof, we replace their use of \cite[Prop.~8.26]{MS} (which is equivalent to \cite[Thm.~15.17]{eisenbud}) with Theorem \ref{thm:grobnerDegeneration} above.

In (b), each such irreducible component corresponds to a minimal prime of $S/\init_\lambda I=S[t]/(\tilde{I} + \langle t\rangle)$.  Applying Krull's principal ideal theorem to $\langle t \rangle \subseteq S[t]/\tilde{I}$, noting that $t$ is not a zero divisor on $S[t]/\tilde{I}$ by Theorem \ref{thm:grobnerDegeneration}(a), we find that the dimension of every irreducible component of $\Spec(S/\init_\lambda I)$ is equal to $\dim S[t]/\tilde{I} -1 = \dim S/I$.
\end{proof}

\subsubsection*{A family of groups acting on a family of schemes}
Our degeneration technique is an application of the following general setup.\footnote{We thank the referees for this observation, which greatly improves the conceptual clarity of our work.}
Let $G$ be an algebraic group, let $H \subset G$ be a closed subgroup, let $X$ be a $G$-variety, and $Y \subseteq X$ an $H$-stable closed subvariety.   Let $\mu \colon K^\times \to G$ be a group homomorphism, and consider the conjugation action of $K^\times$ on $G$ by $g \cdot t = \mu(t^{-1})\, g\, \mu(t)$, and induced action of $K^\times$ on $X$ by $x \cdot t = x \cdot \mu(t)$.  Then for $t \neq 0$, the fiber $\tilde{H}(t)$ is a subgroup acting on the fiber $\tilde{Y}(t)$ since the equation $(x\cdot t) \cdot (g\cdot t) = (x \cdot g) \cdot t$ is readily verified.  In our application, we will see that this action can be extended to the special fiber $t=0$.

\subsection{The flat families $\tilde{\GL_{\text{diag}}}$ and $\tilde{\rep}$}\label{sect:degenQuiverLoci}
We apply the general setup of the previous subsection to our situation. In that notation, $X=\rep$, while $Y=\Omega$, and $G=\GL^2:=\prod_{z\in Q_0}(\GL_{\bd(z)}\times \GL_{\bd(z)})$.  
We write a typical element of $\GL^2$ as $(\,(g^L_{z},g^R_{z})\,)_{z \in Q_0}$, where $g^L_z, g^R_z\in \GL_{\bd(z)}$. There is a right action of $\GL^2$ on $\rep$ given by
\begin{equation}\label{eq:GL2action}
(\dots,V_{\beta_i},V_{\alpha_i},\dots )\cdot (\,(g^L_{z},g^R_{z})\,)_{z \in Q_0} = (\dots,\, (g^R_{y_i})^{-1}V_{\beta_i}g^L_{x_i},\ (g^L_{y_{i-1}})^{-1}V_{\alpha_i}g^R_{x_i},\,\dots).
\end{equation}
The superscripts $L$ and $R$ are used to emphasize that $g^L_z$ (resp., $g^R_z$) acts on the matrix over the arrow to the left (resp., right) of vertex $z$.
Our $H$ is the diagonal subgroup $\GL_{\text{diag}} := \{(\,(g^L_{z},g^R_{z})\,)_{z \in Q_0}\mid g_z^L = g_z^R\}$ inside $\GL^2$.
Note that the action of $\GL_{\text{diag}}\simeq \GL$ on $\rep$ is the standard one of \S\ref{sect:quiverloci}, so each $\zO$ is stable under this action.

We next describe the group homomorphism $\mu\colon K^\times \to \GL^2$. For $t \in K^\times$, let $\rho_z(t)$ denote the diagonal matrix of size $\bd(z)\times \bd(z)$ which has the sequence of entries $t, t^2, t^3, \dotsc, t^{\bd(z)}$ down the diagonal, starting in the upper left. The map $\mu$ is then defined to be
\begin{equation}\label{eq:mu}
\mu\colon K^\times \to \GL^2,\qquad t\mapsto (\,(\rho_z(t^{-1}),\rho_z(t))\,)_{z \in Q_0}.
\end{equation}
Let $\widetilde{\GL_{\text{diag}}}$ and $\widetilde{\zO}$ denote the families obtained from $\mu$ as in the previous subsection.

\begin{proposition}\label{prop:familyacts}
The families $\widetilde{\GL_{\textup{diag}}}$ and $\tilde{\Omega}$ are flat, and $\widetilde{\GL_{\textup{diag}}}$ acts fiberwise on $\tilde{\Omega}$.
\end{proposition}
\begin{proof}
Let $\Mat^2:=\prod_{z\in Q_0}(\Mat(\bd(z),\bd(z))\times \Mat(\bd(z),\bd(z)))$.  
There is a diagonal subvariety $\Mat_{\text{diag}} \subset \Mat^2$ defined analogously to $\GL_{\text{diag}}\subset \GL^2$.  Since $\GL^2$ acts on $\Mat^2$ by conjugation, the $K^\times$-action that $\mu$ induces on $\GL^2$ extends to a $K^\times$-action on $\Mat^2$.

The coordinate rings $K[\Mat^2]$ and $K[\rep]$ are polynomial rings where the variables are coordinate functions picking out matrix entries. 
Observe that the defining ideal of a closed subvariety of either $\Mat^2$ or of $\rep$ is homogeneous with respect to the standard grading if and only if the subvariety is invariant under simultaneous scaling of all matrix entries.
It is clear that this holds for $\Mat_{\text{diag}}\subset \Mat^2$. It also holds for $\zO\subseteq \rep$ since $\Omega$ is invariant under $\GL$ and so in particular under simultaneous scaling of the matrix entries. 
Thus, the families $\widetilde{\Mat_{\text{diag}}}$ and $\widetilde{\zO}$ are flat by Theorem \ref{thm:grobnerDegeneration}(a). 
Restricting to the open subscheme $\widetilde{\GL_{\text{diag}}}\subset \widetilde{\Mat_{\text{diag}}}$ preserves flatness and puts us in the general setup of ``a family of groups acting on a family of schemes'' from \S \ref{sect:degensetup}, so there is a fiberwise action of $\widetilde{\GL_{\textup{diag}}}$ on $\tilde{\Omega}$ for $t \neq 0$. For $t=0$, the fiberwise action follows from \cite[Prop.~4.1]{KMS}.
\end{proof}

We now examine the special fiber of the family $\widetilde{GL_{\text{diag}}}$.  Define the subgroup $\cB_+ \times \cB_- \subseteq \GL^2$ as the set of elements $(\,(g^L_{z},g^R_{z})\,)_{z \in Q_0}$ where each $g^L_z$ is upper-triangular and each $g^R_z$ is lower-triangular. Then we denote by $\cB_+ \times_T \cB_-$ the subgroup of $\cB_+ \times \cB_-$ consisting of elements such that $g^L_{z}$ and $g^R_{z}$ have the same diagonal for each $z$.
The proof of the next lemma is as in \cite[Lem.~4.2]{KMS} with minor changes in notation, and thus omitted.

\begin{lemma}\label{lem:specialGL}
The special fiber $\widetilde{GL_{\textup{diag}}}(0)$ is equal to $\cB_+ \times _T \cB_-$.
\end{lemma}

\subsection{The special fiber $\tO(0)$ is reduced}\label{sect:initReduced}
We will prove that $\tO(0)$ is reduced using the algebraic language of integral weight functions and initial ideals.  Let $\xi$ be the integral weight function on $K[\rep]$ induced by the $K^\times$-action on $\rep$ obtained via the homomorphism $\mu: K^\times \rightarrow \GL^2$ from \eqref{eq:mu}. Thus,
\begin{enumerate}
\item[(W1)] the coordinate function picking out the $(k,l)$ entry of a matrix over an $\za$-type arrow has weight $k+l$;
\item[(W2)] the coordinate function picking out the $(k,l)$ entry of a matrix over a $\zb$-type arrow has weight $-k-l$.
\end{enumerate}

Throughout, we let $Z$ denote the universal matrix over $\mcell$, so $Z_{d_y\times d_x}=(z_{ij})$ is the region of variable entries in the northwest part of $Z$, and $K[\mcell] \simeq K[z_{ij}]$.
$$Z = \begin{bmatrix}
Z_{d_y\times d_x} &\bid_{d_y}\\
\bid_{d_x}&0\\
\end{bmatrix}$$ 
Recall that $P_*$ denotes the set of locations in $Z_{d_y\times d_x}$ which are \emph{outside} of the snake region. Identifying $K[\rep]$ with the subring $K[z_{ij}\mid (i,j)\notin P_*]$ of $K[z_{ij}]$, we extend $\xi$ to an integral weight function $\lambda$ on $K[z_{ij}]$ defined by $\lambda(z_{ij}) = \xi(z_{ij})$ if $(i,j)\notin P_*$, and $\lambda(z_{ij}) = 0$ if $z_{ij}\in P_*$ (it will be clear from the proofs below that the particular extension does not matter).  Equivalently, we have extended the $K^\times$-action on $\rep$ above to a $K^\times$-action on $\mcell$ by acting trivially on the matrix coordinates outside of the snake region.

We need two technical lemmas.
Fix sequences of integers $1 \leq i_1 < i_2 < \cdots < i_r \leq d_y$ and $1 \leq j_1 < j_2 < \cdots < j_r \leq d_x$ corresponding to $r$ distinct rows and $r$ distinct  columns of $Z_{d_y\times d_x}$, respectively, and let $Z'$ be the $r \times r$ matrix obtained from $Z_{d_y\times d_x}$ as the intersection of these rows and columns.  We refer to such $Z'$ as a \emph{submatrix} of $Z_{d_y\times d_x}$.  Consider the monomial $\bz = \prod_{k=1}^r z_{i_k, j_k}$. For each permutation $\sigma \in S_r$ set $\bz_\sigma = \prod_{k=1}^r z_{i_k, j_{\sigma(k)}}$, so that $\det(Z')= \sum_{\sigma \in S_r} (-1)^{\ell(\sigma)} \bz_\sigma$.
We are particularly interested in $\bz_{w_0}$, where $w_0\in S_r$ is the longest element, as usual.

\begin{lemma}\label{lem:simplifyminors}
Any nonzero minor of $Z$ equals $\pm \det(Z')$ for some submatrix $Z'$ of $Z_{d_y\times d_x}$.
\end{lemma}
\begin{proof}
This is a straightforward induction on the size of the minor.
Each minor of $Z$ is equal to $\text{det}(W)$ for a submatrix $W$ of $Z$. If $W$ has no constant entries, then $W$ is necessarily a submatrix of $Z_{d_y\times d_x}$.  So suppose that $W$ has some constant entry (i.e. a $1$ or $0$). Then either the entire row of $W$ or the entire column of $W$ containing that constant entry consists of constant entries, since that is true for $Z$. Furthermore that row or column of constants has at most one 1 and the rest are 0, for the same reason.  So either the minor is $0$, or it is equal to a minor of a strictly smaller submatrix of $W$ by cofactor expansion, in which case the result follows by induction.
\end{proof}

\begin{lemma}\label{lem:initlambda}
For any submatrix $Z'$ of $Z_{d_y\times d_x}$, if $\bz_{w_0} \not\in I_{v_*}$ then $\pm \bz_{w_0}$ is a term of $\init_\zl(\det Z')$.
\end{lemma}
\begin{proof}
Recall that $\tau_e \in S_r$ denotes the simple transposition switching $e$ and $e+1$.  We will prove below that for any $\sigma \in S_r$ and $\tau_e$ such that $\ell(\sigma \tau_e) < \ell(\sigma)$, both of the following hold:
\begin{enumerate}[(i)]
\item $\bz_\sigma \in I_{v_*} \Rightarrow \bz_{\sigma \tau_e} \in I_{v_*}$
\item $\bz_{\sigma \tau_e} \not\in I_{v_*}  \Rightarrow \zl(\bz_{\sigma\tau_e}) \leq \zl(\bz_\sigma)$.
\end{enumerate}
Assuming these, applying the contrapositive of (i) with downward induction on $\ell(\sigma)$ 
shows that if any $\bz_\sigma \notin I_{v_*}$ then also $\bz_{w_0}\notin I_{v_*}$. 
Furthermore, applying downward induction on $\ell(\sigma)$ along with (ii) shows that whenever $\bz_{w_0} \not \in I_{v_*}$,  the weight $\zl(\bz_{w_0})$ is maximal among weights of monomials in $\det(Z')$. Consequently, if $\det(Z')\notin I_{v_*}$ then $\pm \bz_{w_0}$ is a term of $\init_\zl(\det Z')$.

To prove (i) and (ii), note that the assumption $\ell(\sigma \tau_e) < \ell(\sigma)$ is equivalent to $\sigma(e+1) < \sigma(e)$.  Thus, the relative positions in $Z'$ of the only variables differing between $\bz_{\sigma \tau_e}$ and $\bz_\sigma$ can be visualized as below (where there may be additional columns of $Z'$ between the two columns seen here):
\begin{equation}\label{eq:zijinversion}
\begin{bmatrix}
z_{i_e, j_{\sigma(e+1)}} & z_{i_e, j_{\sigma(e)}} \\
z_{i_{e+1}, j_{\sigma(e+1)}} & z_{i_{e+1}, j_{\sigma(e)}}
\end{bmatrix}
=: 
\begin{bmatrix}
a & b \\
c & d
\end{bmatrix}.
\end{equation}
In this shorthand, $bc$ appears as a factor in $\bz_\sigma$, and $\bz_{\sigma \tau_e}$ is obtained from $\bz_\sigma$ by replacing the factor $bc$ with $ad$.

To prove (i), $\bz_\sigma \in I_{v_*}$ if and only if some factor $\bz_{i_k, j_{\sigma(k)}} \in I_{v_*}$, which is if and only if some position $(i_k, j_{\sigma(k)})$ is outside the snake region.  If $k \neq e, e+1$, then $\bz_{i_k, j_{\sigma(k)}}$ is a factor of $\bz_{\sigma \tau_e}$ as well, so that $\bz_{\sigma \tau_e} \in I_{v_*}$.
If $k=e$ and $(i_e, j_{\sigma(e)})$ is above the snake region, then so is $(i_e, j_{\sigma(e+1)})$ since $\sigma(e+1) < \sigma(e)$.  So the factor $a$ of $\bz_{\sigma \tau_e}$ is in $I_{v_*}$.
If $k=e$ and $(i_e, j_{\sigma(e)})$ is below the snake region, then so is $(i_{e+1}, j_{\sigma(e)})$ so the factor $d$ of $\bz_{\sigma \tau_e}$ is in $I_{v_*}$.
The two cases where $k=e+1$ are seen similarly, proving (i).

To prove (ii), the assumption $\bz_{\sigma \tau_e} \not\in I_{v_*}$ and (i) imply that all four matrix positions illustrated in  \eqref{eq:zijinversion} are inside the snake region.  
By the sentence below \eqref{eq:zijinversion}, we have $\zl(\bz_{\sigma\tau})= \zl(\bz_\sigma) +\varepsilon$ where we define
$\varepsilon :=-\zl(b) - \zl(c) + \zl(a) + \zl(d)$.
There are three cases to consider.  The first is when $a,b,c,d$ are in the same block of the snake region, in which case $\varepsilon = 0$ by (W1) or (W2).  In the other two cases, the variables in \eqref{eq:zijinversion} are split between $\za$ and $\zb$ type blocks and (W1), (W2) give inequalities on the variables within each block:
\begin{equation}\label{eq:3cases}
  \begin{blockarray}{ccc}
      \begin{block}{c[cc]}
\za & a & b \\
\cline{1-3}
\zb & c & d \\
      \end{block}
    \end{blockarray}
\Rightarrow \zl(a) \leq \zl(b),\ \zl(c) \geq \zl(d);
\qquad
\begin{blockarray}{c|c}
 \za & \zb \\
      \begin{block}{[c|c]}
 a & b \\
 c & d \\
      \end{block}
    \end{blockarray}
 \Rightarrow \zl(a) \leq \zl(c),\ \zl(b) \geq \zl(d).
\end{equation}
In each case, it is immediate to verify that $\varepsilon \leq 0$, so (ii) holds.
\end{proof}

\begin{proposition}\label{prop:Oreduced}
The initial scheme $\tilde{\zO}(0)$ is reduced.
\end{proposition}
\begin{proof}
Since the integral weight function $\zl$ extends $\xi$, the Zelevinsky map $\zeta$ of Theorem \ref{thm:Zmap} is $K^\times$-equivariant with respect to the associated $K^\times$-actions.  Therefore, $\zeta$ induces an isomorphism $\widetilde{\zO} \simeq \widetilde{\zeta(\Omega)}$ which restricts to an isomorphism of each fiber $\widetilde{\zO}(t) \simeq \widetilde{\zeta(\Omega)}(t)$.  In particular, $\widetilde{\zO}(0)$ is reduced if and only if $\widetilde{\zeta(\Omega)}(0)$ is reduced, 
and the latter is equivalent to showing that $\init_{\lambda}I_{v(\zO)}$ is a radical ideal by Theorem \ref{thm:grobnerDegeneration}(c).
This is done by further degenerating via the monomial order $\prec$ defined by $z_{ij} \prec z_{kl}$ when either $j < l$, or $j=l$ and $k < i$.
Specifically, we will prove the key equality
\begin{equation}\label{eq:keyequality}
\init_{\prec}\init_{\lambda}I_{v(\zO)} = \init_{\prec}I_{v(\zO)}
\end{equation}
using the Gr\"obner basis
\begin{equation}\label{eq:GBKL}
\cG=\bigcup_{1 \leq p,q \leq d} \{\text{minors of }Z_{p \times q} \text{ of size }1+\rank v(\Omega)_{p\times q}\} 
\end{equation}
of $I_{v(\Omega)}$ from \cite[Thm.~2.1]{wooyong}.
Once we have this, it will follow that $\init_{\lambda} I_{v(\Omega)}$ is radical. Indeed, the definition of a Gr\"obner basis says that the initial terms of the elements of $\cG$ generate the right hand side of \eqref{eq:keyequality}. Since the initial term of any minor is squarefree, $\init_{\prec}I_{v(\zO)}$ is a squarefree monomial ideal and thus radical. Then from the key equality we have $\init_{\prec} \init_{\lambda} I_{v(\zO)}$ is radical, which implies the ideal $\init_{\lambda} I_{v(\Omega)}$ is also radical (for example, by \cite[Prop.~3.3.7]{HH}).

To prove the key equality, we first show that the right hand side is contained in the left.
Each nonzero element of $\cG$ is equal to $\pm \det(Z')$ for some submatrix $Z'$ of $Z_{d_y\times d_x}$ by Lemma \ref{lem:simplifyminors}.  So using that $\cG$ is a Gr\"obner basis of the right hand side, it is enough to show that the monomial $\init_\prec (\det Z')$ is in the left hand side of \eqref{eq:keyequality}. 
By the definition of $\prec$, it is easy to see that $\init_\prec \text{det}(Z')=\bz_{w_0}$.  Consider two cases of whether $\bz_{w_0}$ is in $I_{v_*}$ or not.  If so, then  $\bz_{w_0} \in I_{v_*}= \init_\prec \init_\zl I_{v_*}  \subseteq \textrm{in}_{\prec}\textrm{in}_{\lambda} I_{v(\Omega)}$, where the first equality holds since $I_{v_*}$ is generated by a subset of the variables, and the containment holds since $I_{v_*}\subseteq I_{v(\zO)}$. So in this case $\bz_{w_0}$ is in the left hand side of \eqref{eq:keyequality}.
On the other hand, if $\bz_{w_0} \not\in I_{v_*}$,  Lemma \ref{lem:initlambda} implies that $\pm \bz_{w_0}$ is a term of $\init_\zl(\det Z')$, so that 
$\pm \bz_{w_0}=\init_\prec \init_\zl (\det Z') \in \textrm{in}_{\prec}\textrm{in}_{\lambda} I_{v(\Omega)}$ shows that $\bz_{w_0}$ is in the left hand side of \eqref{eq:keyequality} in this case as well.
Therefore, the right hand side of \eqref{eq:keyequality} is contained in the left.

Now both ideals in \eqref{eq:keyequality} are initial ideals of $I_{v(\Omega)}\subseteq K[z_{ij}]$, so their associated quotient rings both have the same Hilbert series as $K[z_{ij}]/I_{v(\Omega)}$ (see \cite[Prop.~8.28]{MS}). 
The equality of Hilbert series together with the fact that one side of \eqref{eq:keyequality} is contained in the other forces \eqref{eq:keyequality} to be an equality.
\end{proof}


\subsection{Relating pipe dreams and lacing diagrams}\label{sect:pipeToLace}
In this section we establish some combinatorial results which are necessary for our proof of the bipartite component formula. 
We define symmetric groups associated to the arrows of $Q$ by $S_{\za_k} = S_{\bd(y_{k-1})+\bd(x_k)}$ and $S_{\zb_k} = S_{\bd(y_k)+\bd(x_k)}$.
Consider the set $S_\bd:= \prod_{k=1}^n S_{\zb_k} \times S_{\za_k} $, noting that there is one factor for each arrow of $Q$. 
A typical element is denoted by $\bv = (v_n, v^n, \dotsc, v_1, v^1)$ with $v^k \in S_{\za_k}$ and $v_k \in S_{\zb_k}$,
and we let $|\bv| = \sum_k \ell(v^k) +\ell(v_k)$.

Let $P$ be a pipe dream on a $d_y\times d_x$ grid, such that $P_*\subseteq P$.  Let $P^k$ be the \emph{mini pipe dream} on a $\bd(y_{k-1})\times \bd(x_k)$ grid extracted from $P$ by restricting to the block of the snake region indexed by $\za_k$, and $P_k$ be the mini pipe dream on a $\bd(y_k)\times \bd(x_k)$ grid extracted from $P$ by restricting to the block of the snake region indexed by $\zb_k$.
Define an operation $\pi(P)=\bv \in S_\bd$ by $v^k =\delta(\rot(P^k))$ and $v_k =\delta(P_k)$, where $\delta(-)$ is Demazure product (see \S\ref{sect:pipeDreams}).

Let $\cL_\bd$ be the set of all lacing diagrams of dimension vector $\bd$.  We consider $\cL_\bd$ as a subset of $S_\bd$ by identifying a lacing diagram $\bw=(w_n, w^n, \dotsc, w_1, w^1)$, where $w_k$ is associated to $\zb_k$ and $w^k$ associated to $\za_k$, with $(c(w_n),\, c(\rot(w^n)), \dotsc, c(w_1),\, c(\rot(w^1))) \in S_\bd$.  Here, we consider each $c(w_k) \in S_{\zb_k}$ and $c(\rot(w^k)) \in S_{\za_k}$. 
Let $P$ be as above and furthermore assume that all $P_k,\, P^k$ are reduced pipe dreams. We define $\bw(P) = (w_n, w^n, \dotsc, w_1, w^1)\in \cL_\bd$ by $w_k = w(P_k)$ and $w^k=\rot(w(\rot(P^k)))$, where $w(-)$ is the ``follow the pipes'' operation defined in \S\ref{sect:pipeDreams}.

For example, taking the pipe dreams of Figure \ref{fig:pipes}, we have $\pi(P_2) = \pi(P_1)$.
Following the pipes in each block of $P_1$, which is reduced, yields that $\bw(P_1)$ is the lacing diagram of Figure \ref{fig:laces}.  Notice that some pipe crossings are only visible in the extended diagram of $\bw(P_1)$.

\begin{lemma}\label{lem:followPipes}
If $P\in \RedPipes(v_0, v(\zO))$, then $\pi(P)=\bw(P)$, under the identification of lacing diagrams with elements of $S_\bd$ described above.
\end{lemma}
\begin{proof}
Recall that the $1$s in the permutation matrix $v(\zO)$ appear northwest to southeast along block rows and columns by \eqref{eq:nwbr}.
Therefore, any reduced pipe dream for $v(\zO)$, when extended by elbow tiles to the full $d\times d$ grid, has the property that two pipes do not cross if those pipes enter the $d\times d$ grid on the left in the same block row or exit the $d\times d$ grid on the top in the same block column.  
By considering the configuration of cross and elbow tiles outside of the snake region, we see that two pipes that enter block $\beta_k$ on the bottom or exit on the right do not cross. Similarly, two pipes that enter block $\alpha_k$ from the left or exit on the top do not cross.
Thus Lemma \ref{lem:pipefacts} applied to each $P_k$ and $\rot(P^k)$ yields the result by direct substitution into the definitions of $\bw(P)$ and $\pi(P)$.  
\end{proof}

The following is a bipartite analog of \cite[Thm.~5.10]{KMS}. 

\begin{proposition}\label{prop:pipetolacea}
If $P \in \RedPipes(v_0, v(\zO))$, then $\bw(P) \in W(\zO)$.
\end{proposition}
\begin{proof}
To show that $\bw(P) \in W(\zO)$, we need to show first that $\bw \in \zO^\circ$, and then that $|\bw|$ is minimal among lacing diagrams in $\zO^\circ$.  For the first, we need to see that, for each pair of vertices $(z_i, z_j)$, the number of laces with left endpoint $z_i$ and right endpoint $z_j$ is the same in $\bw(P)$ as in a lacing diagram in $\zO^\circ$.

For a lacing diagram in $\zO^\circ$, the number of laces with left endpoint $z_i$ and right endpoint $z_j$ (where $z_i$ is weakly left of $z_j$) is the number of 1s in block $(z_i,z_j)$ of $v(\zO)$ by Theorem \ref{thm:lacesZperm}.  
Since $P$ is reduced and $\delta(P) = v(\zO)$, the number of ones in this block is the number of pipes in $P$ (extended to the full $d\times d$ grid as usual) from block row $z_i$ to block column $z_j$.  
We claim this is the number of laces in $\bw$ with left endpoint $z_i$ and right endpoint $z_j$.  
Using the correspondence between blocks of the snake region and arrows of $Q$, we get an identification of the vertices of $Q$ with certain walls of blocks of the snake region (see examples in Figure \ref{fig:walls}): namely, the wall associated to a vertex is the wall separating the blocks of the two arrows incident to that vertex (extending this pattern in the obvious way for vertices $y_0, y_n$).  
\begin{figure}
$\vcenter{\hbox{\begin{tikzpicture}[scale=1,>=latex]
    \pipedreamalt{0.5}{(0,0)}{}{}
    {
      1/1/1/pink/pink,1/2/1/pink/pink,1/3/1/pink/pink,1/4/1/pink/pink,1/5/1/pink/pink,
      2/1/1/pink/pink,2/2/1/pink/pink,
      3/1/1/pink/pink,3/2/1/pink/pink,
      4/6/1/pink/pink,4/7/1/pink/pink,
      5/6/1/pink/pink,5/7/1/pink/pink,
      6/3/1/pink/pink,6/4/1/pink/pink,6/5/1/pink/pink,6/6/1/pink/pink,6/7/1/pink/pink,
      7/3/1/pink/pink,7/4/1/pink/pink,7/5/1/pink/pink,7/6/1/pink/pink,7/7/1/pink/pink,
      5/1/0/red/white,4/1/0/white/red,4/2/1/white/red,4/3/0/black/white,3/3/0/white/black,
      3/4/1/white/black,3/5/0/black/white,2/5/0/white/black,2/6/0/black/white,1/6/0/white/black,1/7/1/white/black}
      {\node at (3.5,-0.25) {$y_0$};\node at (3,-0.5) {$x_1$};\node at (2.5,-1) {$y_1$};\node at (1.75,-1.5) {$x_2$};
      \node at (1,-2) {$y_2$};\node at (0.5,-2.5) {$x_3$};\node at (0,-3) {$y_3$};}

    \latticepath{0.5}{(3.5,-0.5)}{very thick}{
     0/-2/black,-2/0/black,0/-2/black,-3/0/black,0/-2/black,-2/0/black,0/2/white,0/2/black,2/0/black,0/2/black,3/0/black,0/1/black,2/0/black}
    \latticepath{0.5}{(3.5,0)}{dashed}{
    0/-1/black,-2/0/black,0/-2/black,-3/0/black,0/-2/black,-2/0/black,0/-2/black}
  \end{tikzpicture}}}$
  \qquad
  $\vcenter{\hbox{\begin{tikzpicture}[scale=1,>=latex]
    \pipedreamalt{0.5}{(0,0)}{}{}
    {
      1/1/1/pink/pink,1/2/1/pink/pink,1/3/1/pink/pink,1/4/1/pink/pink,1/5/1/pink/pink,
      2/1/1/pink/pink,2/2/1/pink/pink,
      3/1/1/pink/pink,3/2/1/pink/pink,
      4/6/1/pink/pink,4/7/1/pink/pink,
      5/6/1/pink/pink,5/7/1/pink/pink,
      6/3/1/pink/pink,6/4/1/pink/pink,6/5/1/pink/pink,6/6/1/pink/pink,6/7/1/pink/pink,
      7/3/1/pink/pink,7/4/1/pink/pink,7/5/1/pink/pink,7/6/1/pink/pink,7/7/1/pink/pink,
      5/3/0/white/red,5/4/0/red/white,4/4/2/red/white,3/4/2/red/white,2/4/2/red/white}
      {\node at (3.5,-0.25) {$y_0$};\node at (3,-0.5) {$x_1$};\node at (2.5,-1) {$y_1$};\node at (1.75,-1.5) {$x_2$};
      \node at (1,-2) {$y_2$};\node at (0.5,-2.5) {$x_3$};\node at (0,-3) {$y_3$};}

    \latticepath{0.5}{(3.5,-0.5)}{very thick}{
     0/-2/black,-2/0/black,0/-2/black,-3/0/black,0/-2/black,-2/0/black,0/2/white,0/2/black,2/0/black,0/2/black,3/0/black,0/1/black,2/0/black}
    \latticepath{0.5}{(3.5,0)}{dashed}{
    0/-1/black,-2/0/black,0/-2/black,-3/0/black,0/-2/black,-2/0/black,0/-2/black}
  \end{tikzpicture}}}$
    \caption{Visual aid for proof of Prop.~\ref{prop:pipetolacea}}
\label{fig:walls}
\end{figure}
Now by the definition of $\bw(P)$, each of its laces corresponds to a specific pipe in $P$, and that lace passes through a given vertex if and only if the corresponding pipe passes through the associated wall.  The claim can then be verified by inspection. 
 For example, a lace has left \emph{endpoint} $y_k$ exactly when the corresponding pipe passes through the wall labeled $y_k$ but not the wall labeled $x_{k+1}$, and this happens exactly when that pipe enters the snake region at left in block row $x_{k+1}$, and thus enters the overall grid $d \times d$ grid in block row $x_{k+1}$ due to the shape of $P_*$.  
 Similarly, the left \emph{endpoint} is $x_k$ exactly when the corresponding pipe travels through the wall labeled $x_k$ but not the wall labeled $y_k$, which happens exactly when that pipe enters the snake region at bottom in block column $y_k$, and thus enters the overall $d \times d$ grid in block row $y_k$ due to the shape of $P_*$ and extension by elbows outside the original $d_y \times d_x$ grid of $P$. 
 A visual aid is found in Figure \ref{fig:walls}: on the left grid we see a pipe contributing a lace with left endpoint $y_2$ and right endpoint $y_0$, and on the right grid we see a pipe contributing a lace whose left and right endpoints are both $x_2$.

Now the fact that $\bw$ is minimal follows from \eqref{eq:codimcrossings} and the computation:
\begin{equation}
|\bw| = \sum\nolimits_k \ell(w_k) +\ell(w^k) = |P \setminus P_*| = |P| - |P_*| = \ell(\delta(P)) - \ell(\delta(P_*)) = \ell(v(\zO)) - \ell(v_*) 
\end{equation} 
where the second equality holds because $P$ is reduced (so each extracted mini pipe dream is reduced), the third by Lemma \ref{lem:homsubdiag}, the fourth because $P, P_*$ are reduced, and the last by assumption.  Now the right hand side equals $\codim \zO$ by \eqref{eq:Zpermlength}.
\end{proof}

\subsection{Irreducible components of $\tilde{\zO}(0)$}\label{sect:componentproof}
In this section, we provide a complete description of $\tilde{\zO}(0)$ in terms of northwest and southeast matrix Schubert varieties (Theorem \ref{thm:lacedegen}). We begin with some technical lemmas. The proof of the first is as in \cite[Lem.~4.10]{KMS}, with minor changes in notation.

\begin{lemma}\label{lem:onefiberall}
If a lacing diagram $\bw$ is in the fiber $\tO(1)$,
then $\bw \in \tO(t)$ for all $t \in \AA^1$.
\end{lemma}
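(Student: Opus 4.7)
The plan is to reduce the lemma to the orbit-invariance claim that for every $t \in K^\times$, the point $\bw \cdot t$ under the action \eqref{eq:rhorep} lies in the $\GL(\bd)$-orbit of the lacing diagram $\bw$. Granting this claim, the hypothesis $\bw \cdot a_0 \in \zO_\br$ together with the $\GL(\bd)$-invariance of $\zO_\br$ will force $\bw \in \zO_\br$, and hence $\bw \cdot a \in \zO_\br$, so $\bw \in \tO_\br(a)$, for every $a \in K^\times$. For the $a=0$ fiber I would then observe that $\{\bw\} \times K^\times \subseteq \tO_\br$ by the preceding, and, since $\tO_\br$ is closed in $\rep \times \AA^1$ by construction, the closure $\{\bw\} \times \AA^1$ is also contained in $\tO_\br$, giving $\bw \in \tO_\br(0)$.

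To prove the orbit-invariance claim, I would build an explicit one-parameter family $g(t) = (g_z(t))_{z \in Q_0} \in \GL(\bd)$ of diagonal base changes satisfying $\bw \cdot t = \bw \cdot g(t)$. Comparing \eqref{eq:gV} and \eqref{eq:rhorep} entry by entry, this identity becomes a system of scalar equations indexed by the $1$'s of the partial permutations $\bw_a$: a $1$ at position $(i,j)$ in $\bw_\za$ (respectively $\bw_\zb$), for an arrow $a\colon y \to x$, produces the single equation
\[
\frac{(g_x(t))_{jj}}{(g_y(t))_{ii}} \;=\; t^{-(i+j)} \quad \bigl(\text{respectively, } t^{+(i+j)}\bigr).
\]

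The main structural point, and the only place where anything genuinely needs to be checked, is simultaneous consistency of these equations. Two features of lacing diagrams make this work. First, $\bw$ decomposes as a direct sum of its laces, and distinct laces occupy disjoint sets of dots at every vertex; consequently the system decouples into one subsystem per lace, each involving disjoint diagonal entries of the $g_z(t)$. Second, each lace $\ell$ is a path in the underlying type $A$ (hence tree) quiver $Q$, so the equations along $\ell$ form an acyclic chain of ratios; fixing $(g_z(t))_{ii}=1$ at one endpoint of $\ell$ and propagating along the path produces a consistent solution, with the signs $\pm(i+j)$ alternating according to the $\za$/$\zb$ type of the arrows encountered, and with entries of $g(t)$ untouched by any lace set to $1$. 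This reproduces the equioriented argument of \cite[Lemma~4.10]{KMS}, with the extra bookkeeping that both signs $\pm(i+j)$ now appear because of the asymmetric action of $K^\times$ on the $\za$-arrows versus the $\zb$-arrows; that same asymmetry is precisely why the $K^\times$-action on $\rep$ is not a $\GL(\bd)$-action globally, yet along the orbit of a single lacing diagram the action of each $t$ can nonetheless be matched by a diagonal $\GL(\bd)$-element, as just constructed.
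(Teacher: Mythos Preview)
Your proof is correct and follows essentially the same approach as the one the paper references (\cite[Lemma~4.10]{KMS}), which the authors state their omitted argument mirrors. The reduction to showing $\bw\cdot t$ lies in the $\GL(\bd)$-orbit of $\bw$, the lace-by-lace solution of the diagonal ratio equations, and the closure argument for the $a=0$ fiber are exactly what is needed, with the only new wrinkle being the alternating signs $\pm(i+j)$ from the bipartite $K^\times$-action, which you handle correctly.
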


For a lacing diagram $\bw=(w_n, w^n, \dotsc, w_1, w^1)$, we define a product of matrix Schubert varieties (see \S\ref{sect:matSchubert}) inside $\rep$ by
\begin{equation}\label{eq:Owdef}
\cO_\bw := \prod_{k=1}^n \Mat(\bd(y_k), \bd(x_k))_{w_k} \times \prod_{k=1}^n \Mat(\bd(y_{k-1}), \bd(x_{k}))^{w^k}.
\end{equation}
Let $\mathcal{B}_+\times \mathcal{B}_-\subseteq \GL^2$ be as in \S\ref{sect:degenQuiverLoci}, with its right action on $\rep$ from \eqref{eq:GL2action}.

\begin{lemma}\label{lem:O(w)Description}
Let $V\in \rep$. The closure, inside of $\rep$, of the $\mathcal{B}_+\times \mathcal{B}_-$ orbit of $V$ is equal to $\cO_\bw$ for some lacing diagram $\bw$.
\end{lemma}
\begin{proof}
The $\mathcal{B}_+\times \mathcal{B}_-$-orbit of $V = (V_{\beta_n}, V_{\alpha_n},\dots, V_{\beta_1}, V_{\alpha_1})\in \rep$
can be expressed as
\begin{equation}\label{eq:B+B-}
V\cdot (\mathcal{B}_+\times \mathcal{B}_-) = \prod_{i=1}^n B_-V_{\beta_i}B_+ \times \prod_{i=1}^n B_+V_{\alpha_i}B_-.
\end{equation}
where each $B_+$ (resp., $B_-$) on the right hand side denotes a Borel subgroup of upper (resp., lower) triangular matrices of the appropriate size. Let $w_i$ (resp., $w^i$) denote the unique partial permutation matrix contained in $B_-V_{\beta_i}B_+$ (resp., $B_+V_{\alpha_i}B_-$). Then,
\begin{equation}
V\cdot (\mathcal{B}_+\times \mathcal{B}_-) = \prod_{i=1}^n B_-w_iB_+ \times \prod_{i=1}^n B_+w^iB_-.
\end{equation}
Taking closures on both sides shows that the $\mathcal{B}_+\times \mathcal{B}_-$ orbit closure of $V$, inside of $\rep$, is equal to $\cO_\bw$ for the lacing diagram $\bw$ with matrix form $(w_n,w^n,\dots, w_1,w^1)$.
\end{proof}

As in \S \ref{sect:initReduced}, let $Z$ denote the universal matrix over $\mcell$ and write $K[\mcell] = K[z_{ij}]$. For $P$ a pipe dream on a $d_y\times d_x$ grid, define $I_P:=\langle z_{ij} \mid (i,j) \in P\rangle \subseteq K[z_{ij}]$.

\begin{lemma}\label{lem:inPrec}
The scheme  $\init_\prec \zeta(\cO_\bw)$ is reduced, and each of its irreducible components is of the form $\Spec(K[z_{ij}]/I_P)$ where $P$ is a pipe dream on a $d_y\times d_x$ grid which contains $P_*$ and satisfies $\bw(P) = \bw$.
\end{lemma}

\begin{proof}
Let $I(\zeta(\mathcal{O}_\bw))$ denote the defining ideal of (the reduced scheme) $\zeta(\mathcal{O}_{\bw})$.  
We prove the equivalent algebraic statement: $\init_\prec I(\zeta(\mathcal{O}_\bw))$ is a radical ideal, and each prime of $K[z_{ij}]$ minimal over it is of the form $I_P$ with $P$ as in the lemma statement. 

For each $a \in Q_1$, let $Z_{a}$ denote the block of $Z$ corresponding to $a$, and let $K[Z_a]$ denote the polynomial subring of $K[z_{ij}]$ generated by the matrix entries of $Z_{a}$.  Write $z^a_{ij} \in K[Z_a]$ for the $(i,j)$ entry of $Z_a$.
Taking $\bw = (w_n, w^n, \dotsc, w_1, w^1)$ as usual, 
as defined in \S\ref{sect:matSchubert} let $J_k :=J_{w_k} \subseteq K[Z_{\beta_k}]$ (resp., $J^k:=J^{w^k}\subseteq K[Z_{\alpha_k}]$) be the ideal of the northwest (resp., southeast) matrix Schubert variety defined from $w_k$ (resp., $w^k$). 

Let $\prec$ be the monomial order on $K[\mcell]$ defined in the proof of Proposition \ref{prop:Oreduced}, and denote by the same symbol the restriction of this order to each $K[Z_{a}]$.  We need to first describe the minimal primes of the ideals $\init_\prec J_k$ and $\init_\prec J^k$.
For a pipe dream $P$ on a $\bd(ta) \times \bd(ha)$ grid, we define the corresponding ideal of $K[Z_a]$ by $I_P=\langle z^a_{ij} \mid (i,j) \in P\rangle$.

Since the initial term of any minor of $Z$ is its antidiagonal term, \cite[Thm.~B]{KM05} tells us that every prime of $K[Z_{\beta_k}]$ minimal over $\init_\prec J_k$ is of the form $I_P$ for some reduced pipe dream $P$ on a $\bd(y_{k}) \times \bd(x_k)$ grid such that $\delta(P)=c(w_k)$ (see Theorems 16.18 and 16.28 of \cite{MS} for the partial permutation version).
To study $\init_\prec J^k$, consider the isomorphism of coordinate rings $\rot\colon K[Z_{\za_k}] \xrightarrow{\sim} K[Z_{\za_k}]$ induced by $180^\circ$ rotation of matrices, noting that $J^k = \rot(J_{\rot(w^k)})$ by Lemma \ref{lem:SEmatrixschubert}.  
Although $\rot(\init_\prec g) \neq \init_\prec \rot(g)$ in general, equality does hold when $g$ is a minor. Indeed, in this setting, $\init_\prec g$ is the antidiagonal term of $g$, and $\rot$ takes the antidiagonal term of any (square) submatrix to the antidiagonal term of another submatrix. 
Since minors form a Gr\"obner basis here \cite[Thm.~B]{KM05}, we get $\init_\prec J^k = \init_\prec \rot(J_{\rot(w^k)})= \rot (\init_\prec J_{\rot(w^k)})$.

Since $\rot$ is an isomorphism, each prime of $K[Z_{\za_k}]$ minimal over $\rot (\init_\prec J_{\rot(w^k)})$ is of the form $\rot(\fp)$ where $\fp$ is a prime of $K[Z_{\za_k}]$ minimal over $\init_\prec J_{\rot(w^k)}$.  Again using \cite[Thm.~B]{KM05}, such $\fp$ is equal to $I_{P'}$ for some reduced pipe dream $P'$ on a $\bd(y_{k-1})\times \bd(x_k)$ grid such that $\delta(P') = c(\rot(w^k))$.  Thus we have shown that every prime of $K[Z_{\za_k}]$ minimal over $\init_\prec J^k$ is of the form $\rot(I_{P'}) = I_{\rot(P')}$ for some $P'$ as above.

Extending all ideals below to $K[z_{ij}]$, the defining ideal of $\zeta(\mathcal{O}_\bw)\subseteq \mcell$ is
\begin{equation}\label{eq:idealOw}
I(\zeta(\mathcal{O}_\bw)) = \sum_{k=1}^n J_k + \sum_{k=1}^n J^k+I_{P_*}.
\end{equation}
Distinct summands on the right side of \eqref{eq:idealOw} have reduced Gr\"obner bases in disjoint sets of variables, so the initial ideal of the sum on the right side of \eqref{eq:idealOw} is the sum of the initial ideals:
\begin{equation}\label{eq:initIdealOw}
\text{in}_\prec I(\zeta(\mathcal{O}_\bw)) = \sum_{k=1}^n \init_\prec J_k + \sum_{k=1}^n \init_\prec J^k+I_{P_*}.
\end{equation}
Since every summand on the right hand side of \eqref{eq:initIdealOw} is squarefree monomial ideal, $\text{in}_\prec I(\zeta(\mathcal{O}_\bw))$ is radical. 
Each of the irreducible components of $\text{in}_\prec \zeta(\mathcal{O}_\bw)$ comes from a prime of $K[z_{ij}]$ minimal over $\init_\prec I(\zeta(\mathcal{O}_\bw))$; fix such a prime.
From above, it is of the form 
 \begin{equation}\label{eq:IPs}
 \sum_{k=1}^n I_{P_k} + \sum_{k=1}^n I_{\rot(P^k)} + I_{P_*}
 \end{equation}
 for some collection of mini pipe dreams $P_k,\, P^k$ as above.  
Identifying the grid of each $P_k$ and $\rot(P^k)$ with its corresponding block in the snake, we can consider each $P_k$ and $\rot(P^k)$ as a pipe dream on a $d_y \times d_x$ grid which has cross tiles only in that block.
We see that the prime in \eqref{eq:IPs} is equal to $I_P$ where $P = P_*\cup (\bigcup_{k=1}^n P_k)\cup(\bigcup_{k=1}^n \text{rot} (P^k))$.
By construction, the mini pipe dream of $P$ extracted from block $\zb_k$ is $P_k$, and $\delta(P_k) = c(w_k)$.  Similarly, the  mini pipe dream of $P$ extracted from block $\za_k$ is $\rot(P^k)$, and $\delta(\rot(\rot(P^k)))=\delta(P^k) = c(\rot(w^k))$, so we have by Lemma \ref{lem:pipefacts} that $\pi(P)=\bw(P) = \bw$.
\end{proof}

\begin{theorem}\label{thm:lacedegen}
A bipartite type $A$ quiver locus $\Omega \subseteq \rep$ degenerates to a reduced union of products of northwest and southeast of matrix Schubert varieties. In particular:
\begin{equation}\label{eq:quiverlocusdegen}
\tilde{\zO}(0) = \bigcup_{\bw \in W(\zO)} \cO_\bw.
\end{equation}
\end{theorem}

\begin{proof}
We first show that the scheme $\tilde{\Omega}(0)$, which is reduced by Proposition \ref{prop:Oreduced}, is a union of some collection of $\cO_\bw$. By Proposition \ref{prop:familyacts} and Lemma \ref{lem:specialGL}, $\tilde{\Omega}(0)$ is a union of $\cB_+ \times_T \cB_-$ orbits.
As the groups $\cB_+ \times \cB_-$ and $\cB_+ \times_T \cB_-$ have the same orbits in $\mathtt{rep}$ (which is proven just as in the equioriented case \cite[Prop.~3.4]{KMS}), we see that $\tilde{\Omega}(0)$ is a union of $\cB_+ \times \cB_-$ orbits. Thus, each irreducible component is a $\cB_+\times \cB_-$ orbit closure, which by Lemma \ref{lem:O(w)Description} is equal to some $\cO_\bw$. 
Let $J$ denote the set of lacing diagrams which index the irreducible components of $\tilde{\Omega}(0)$. We will show that $J = W(\zO)$.

Let $\bw \in W(\zO)$.  Since $\bw \in \zO = \tilde{\zO}(1)$, Lemma \ref{lem:onefiberall} implies that $\bw \in \tilde{\zO}(0)$, so its $\tGL(0)$-orbit closure $\cO_\bw$ is contained in $\tO(0)$.   We know that $\codim_\rep \cO_\bw =|\bw|$ for any $\bw$ by \cite[Prop.~15.30]{MS}.  
On the other hand, \eqref{eq:codimcrossings} and Lemma \ref{lem:degenfacts}(b) imply that this is equal to the codimension in $\rep$ of any irreducible component of $\tilde{\Omega}(0)$.
Therefore $\cO_\bw$ is an irreducible component of $\tilde{\Omega}(0)$, showing that $W(\zO)\subseteq J$.

Next suppose that $\bw\in J$.  Since $\zeta(\cO_\bw) \subseteq \zeta(\tilde{\zO}(0))$, we have that 
\begin{equation}\label{eq:initchain}
\init_\prec \zeta(\cO_\bw) \subseteq \init_\prec \zeta(\tilde{\zO}(0)) = \init_\prec Y_{v(\zO)},
\end{equation}
where the equality is the geometric version of \eqref{eq:keyequality}.  Observe that the dimensions of both sides of \eqref{eq:initchain} are equal, since $\cO_\bw$ is an irreducible component of $\tilde{\zO}(0)$ and $\zeta$ is a closed embedding, and families determined by monomial orders are flat \cite[Thm.~15.17]{eisenbud}.  Now take an irreducible component of $\init_\prec \zeta(\cO_\bw)$ of maximal dimension, which by Lemma \ref{lem:inPrec} is of the form $\Spec(K[z_{ij}]/I_{P_0})$ with $P_0$ a pipe dream satisfying $\bw(P_0) = \bw$.  By the dimension observation, this is also an irreducible component of $\init_\prec Y_{v(\zO)}$.  On the other hand, the geometric translation of \cite[Thm.~3.2]{wooyong} tells us that all irreducible components of $\init_\prec Y_{v(\zO)}$ are of the form $\Spec(K[z_{ij}]/I_P)$ for some $P \in \RedPipes(v_0, v(\zO))$.  Since $I_{P_0} = I_P$ if and only if $P_0 = P$, we find that $P_0 \in \RedPipes(v_0, v(\zO))$.  Proposition \ref{prop:pipetolacea} then implies that $\bw = \bw(P_0) \in W(\zO)$, completing the proof.
\end{proof}

We end by extracting a corollary of the above proof, which we need in the next subsection. 
It is a bipartite analogue of \cite[Cor.~6.18]{KMS}.

\begin{corollary}\label{prop:pipetolaceb}
If $\bw \in W(\zO)$, then there exists $P \in \RedPipes(v_0, v(\zO))$ such that $\bw(P) = \bw$. 
\end{corollary}

\begin{proof}
Let $\bw\in W(\zO)$ be given. Then $\cO_{\bw}$ is an irreducible component of $\tilde{\zO}(0)$ by Theorem \ref{thm:lacedegen}. The pipe dream $P_0$ from the last paragraph of the proof of Theorem \ref{thm:lacedegen} therefore satisfies $P_0\in \RedPipes(v_0, v(\zO))$ and $\bw(P_0) = \bw$ as desired.
\end{proof}

\subsection{Bipartite component formula}\label{sect:Kcomponent}
To each lacing diagram $\bw=(w_n, w^n, \dotsc, w_1, w^1)$ for a bipartite type $A$ quiver, we assign the following product of double Grothendieck polynomials as a shorthand:
\begin{equation}\label{eq:gwdef}
\fG_\bw(\bt; \bs) =\left(\prod_{k=1}^n \fG_{w_k}(\bt^{k}; \bs^{k}) \right) \cdot \left(\prod_{k=1}^n  \fG_{\rot(w^k)}(\widetilde{\bt^{k-1}}; \widetilde{\bs^{k}}) \right).
\end{equation}
By Lemma \ref{lem:SEmatrixschubert}, and \eqref{eq:Kpartialperm} this is equal to $\cK_{\mathtt{rep}}(\cO_\bw; \bs, \bt)$.

Recall that the \emph{M\"obius function} of a finite poset $T$ is the (unique) function $\mu \colon T \to \mathbb{Z}$ such that $\sum_{x \geq y} \mu_T(x) = 1$ for all $y \in T$.  It is typically used in the following way:  for $x \in T$, we define ``open'' and ``closed'' characteristic functions
\begin{equation}
\chi^\circ_x (y) =
\begin{cases}
1 & y = x\\
0 & y \neq x,\\
\end{cases}
\qquad 
\chi_x(y) =
\begin{cases}
1 & y \leq x\\
0 & y \not \leq x.\\
\end{cases}
\end{equation}
By inclusion-exclusion, we can write the constant function taking value 1 on $T$ as
\begin{equation}\label{eq:mobiusdef}
1_T = \sum_x \chi^\circ_x = \sum_x \mu_T(x) \chi_x .
\end{equation}

\begin{theorem}[Bipartite component formula]\label{thm:bicomponent}
For any bipartite type $A$ quiver locus $\Omega$, we have
\begin{equation}\label{eq:bicomponentformula}
K\cQ_\Omega (\bt / \bs)= \sum_{\bw \in KW(\Omega)} (-1)^{|\bw| - \codim(\Omega)} \fG_\bw(\bt; \bs) .
\end{equation}
\end{theorem}
\begin{proof}
By Lemma \ref{lem:degenfacts}(a), we have $K\cQ_\Omega (\bt / \bs)= \cK_{\mathtt{rep}}(\tilde{\zO}(0); \bs, \bt)$.
Define the subset of lacing diagrams $\cM := \setst{\bw}{ \cO_\bw \subseteq \tilde{\zO}(0)}$,
and partially order it by $\bw \leq \bw'$ if and only if $\cO_\bw \subseteq \cO_{\bw'}$.
So the maximal elements of $\cM$ correspond to the irreducible components of $\tilde{\zO}(0)$.
The collection of closed subvarieties $\{\cO_\bw \subseteq \tilde{\zO}(0)\}$
has the intersect-decompose property of \cite{Knutson:2009aa} because the intersection of any collection of matrix Schubert varieties is a union of matrix Schubert varieties.
The hypotheses of \cite[Thm.~1]{Knutson:2009aa} are satisfied by this collection because it can be simultaneously compatibly Frobenius split \cite[\S7.2]{Knutson:2009bb}, so we get
\begin{equation}\label{eq:kclassOr}
\cK_{\mathtt{rep}}(\tilde{\zO}(0); \bs, \bt) = \sum_{\bw \in \cM} \mu_{\cM}(\bw) \cK_{\mathtt{rep}}(\cO_\bw; \bs, \bt).
\end{equation}

To compute this M\"obius function, we pass to the pipe complex of \cite[\S3]{wooyong}.  Let $\zD:= \zD_{v_0, v(\Omega)}$ be the set of pipe dreams
on a $d_y \times d_x$ rectangle which contain a reduced pipe dream for $v(\Omega)$.  The partial order on $\zD$ is determined by reverse containment of pipe dreams.  Considering $\zD$ as a simplicial complex, its maximal faces are the elements of $\RedPipes(v_0, v(\Omega))$.
Since every element of $\zD$ contains a pipe dream for $v(\Omega)$, every element contains $P_*$ by Lemma \ref{lem:homsubdiag}, and the operation $\pi$ of \S\ref{sect:pipeToLace} gives a map $\pi \colon \zD \to S_\bd$, which is order preserving when we take reverse (strong) Bruhat order on each factor of the target.  
Recall from \S\ref{sect:pipeToLace} that we consider the set of lacing diagrams of dimension vector $\bd$ as a subset of $S_\bd$. This identification respects the partial orders, so $\cM$ is naturally a subposet of $S_\bd$.
We now describe the image $\pi(\zD)=:S_\zO \subseteq S_\bd$.
The maximal elements of $\zD$ are $P \in \RedPipes(v_0, v(\zO))$, and $\pi(P)=\bw(P)$ is a minimal lacing diagram by Lemma \ref{lem:followPipes} and Proposition \ref{prop:pipetolacea}.  
On the other hand, every minimal lacing diagram for $v(\Omega)$ is in the image of $\pi$ by Corollary \ref{prop:pipetolaceb}, so by Theorem \ref{thm:lacedegen} the maximal elements of $S_\zO$ are exactly the maximal elements of $\cM$.
Clearly $\zD$ has a unique minimal element $P_0$ where the entire $d_y \times d_x$ grid is filled with cross tiles, and $\pi(P_0)$ is the lacing diagram consisting of partial permutation matrices filled with 0s, the unique minimal element of $\cM$.  This shows that $S_{\zO}$ is the intersection of the down-ideal of $S_\bd$ generated by all minimal lacing diagrams with the up-ideal generated by $\pi(P_0)$, and all of $\cM$ is contained in $S_\zO$.

It is straightforward to check that the containment of posets $\cM \subseteq S_\zO$ satisfies the hypotheses of \cite[Lem.~2]{Knutson:2009aa} (keeping in mind that we use \emph{reverse} Bruhat order), so this implies that  $\mu_{S_\zO}(\bv)= \mu_{\cM}(\bv)$ for $\bv \in \cM$ and $\mu_{S_\zO}(\bv)=0$ for $\bv \notin \cM$. 
Taken together, the reductions above yield the following equation, showing that it is enough to compute $\mu_{S_\zO}$.
\begin{equation}\label{eq:simplifiedcomponent}
K\cQ_\Omega (\bt / \bs)= \sum_{\bw \in \cM} \mu_{S_\zO}(\bw) \fG_\bw(\bt; \bs)
\end{equation}

The map $\pi$ induces a map on rings of $\ZZ$-valued functions 
$\pi^* \colon \ZZ[S_\zO] \to \ZZ[\zD].$ 
By \cite[Prop.~3.3]{wooyong} via \cite[Thm.~3.7]{MR2047852}, we see $\zD$ is homeomorphic to a ball (since there is a unique pipe dream for $v_0$), and its M\"obius function is
\begin{equation}\label{eq:mobiusdelta}
\mu_\zD (P) = 
\begin{cases}
(-1)^{\codim_\zD P} & \text{if}\ \delta(P)=v(\Omega)\\
0 & \text{otherwise}.\\
\end{cases}
\end{equation}
Applying \eqref{eq:mobiusdef} in $\ZZ[\zD]$ then gives
\begin{equation}\label{eq:iddelta1}
1_\zD = \sum_{P \in \zD} \chi^\circ_P = \sum_{P \in \Pipes(v_0, v(\Omega))} (-1)^{\codim_\zD P} \chi_P.
\end{equation}
Similarly, in $\ZZ[S_\zO]$ we have the expression
\begin{equation}
1_{S_\zO} = \sum_{\bv \in S_\zO} \chi^\circ_\bv = \sum_{\bv \in S_\zO} \mu_{S_\zO}(\bv) \chi_\bv
\end{equation}
which gives another way of writing
\begin{equation}\label{eq:iddelta2}
1_\zD = \pi^*(1_{S_\zO}) = \sum_{\bv \in S_\zO} \mu_{S_\zO}(\bv) \pi^*(\chi_\bv) .
\end{equation}
To compare this with \eqref{eq:iddelta1}, we need to express $\pi^*(\chi_\bv)$ in the basis $\{\chi_P \}$.  Setting  $\zD(\bv) := \setst{P \in \zD}{\pi(P) \leq \bv}$, by the definitions then M\"obius inversion we have that 
\begin{equation}\label{eq:pichiw}
\pi^*(\chi_\bv) =  1_{\zD(\bv)} = \sum_{\substack{P \in \zD \\ \pi(P) \leq \bv}} \chi^\circ_P =  \sum_{\substack{P \in \zD \\ \pi(P) \leq \bv}} \mu_{\zD(\bv)} (P) \chi_P .
\end{equation}
To compute $\mu_{\zD(\bv)} (P)$, write $\bv = (v_n, v^n, \dotsc, v_1, v^1)$ as in \S\ref{sect:pipeToLace} and notice  $\zD(\bv)$ is a product of pipe complexes
\begin{equation}
\zD(\bv) = \prod_{i=1}^n \zD_{\za_i}(v^i) \times \zD_{\zb_i}(v_i)
\end{equation}
where $\zD_{\za_i}(v^i)$ is the complex of pipe dreams on a $\bd(y_{i-1}) \times \bd(x_i)$ grid which contain a pipe dream for $v^i$, and similarly for $\zD_{\zb_i}(v_i)$.  Thus, $\zD(\bv)$ is also homeomorphic to a ball and its M\"obius function can be computed in each factor as in \eqref{eq:mobiusdelta}. We get
\begin{equation}\label{eq:muDeltaw}
\mu_{\zD(\bv)} (P) = 
\begin{cases}
(-1)^{\codim_{\zD(\bv)} P} & \pi(P) = \bv\\
0 & \pi(P) \neq \bv.\\
\end{cases}
\end{equation}

Substituting \eqref{eq:pichiw} and \eqref{eq:muDeltaw} into \eqref{eq:iddelta2} we get
\begin{equation}\label{eq:iddelta3}
1_{\zD} = \sum_{\bv \in S_\zO} \mu_{S_\zO}(\bv) \left(\sum_{\substack{P \in \zD \\ \pi(P) = \bv}} (-1)^{\codim_{\zD(\bv)} P} \chi_P \right) = \sum_{P \in \zD} \mu_{S_\zO}(\pi(P)) (-1)^{\codim_{\zD(\pi(P))} P} \chi_P .
\end{equation}
Comparing the coefficient of $\chi_P$ in \eqref{eq:iddelta1} and \eqref{eq:iddelta3} we find 
\begin{equation}\label{eq:solvemu}
\mu_{S_\zO}(\pi(P)) = 
\begin{cases}
(-1)^{\codim_\zD P - \codim_{\zD(\pi(P))} P} & \text{if}\ P \in \Pipes(v_0, v(\Omega))\\
0 & \text{otherwise.}
\end{cases}
\end{equation}
Since $\pi(\zD)=S_\zO$, this computes $\mu_{S_\zO}(\bv)$ for all $\bv \in S_\zO$.
In particular, the sum \eqref{eq:simplifiedcomponent} simplifies to a sum over $\bw$ such that there exist $P \in \Pipes(v_0, v(\Omega))$ with $\pi(P) = \bw$ under the identification of $\cM$ with a subposet of $S_\zO$.  By results of Buch, Feh\'er, and Rim\'anyi (\cite[Lem.~6.2]{MR2114821} and \cite[Thm.~3]{BFR}), this is equivalent to summing over all $K$-theoretic lacing diagrams for $\Omega^{\circ}$.\footnote{Their work is in the equioriented case but readily generalizes to arbitrary orientation.}

It remains to simplify the exponent of $-1$ in \eqref{eq:solvemu}.  We have $\codim_\zD P = |P| - \ell(v(\Omega))$ by definition, and applying the analogue of this to each factor of $\zD(\bv)$ gives $\codim_{\zD(\bv)} P = |P\setminus P_*| - |\bv|$ for $\bv=\pi(P)$.
Therefore, we can simplify $\codim_\zD P - \codim_{\zD(\pi(P))} P$ to
\begin{equation}\label{eq:codimcompute}
(|P| - \ell(v(\zO))) - (|P \setminus P_*| - |\bv|) = |\bv| +|P_*|  - \ell(v(\zO)) = |\bv| - \codim \Omega,
\end{equation}
where the last equality uses $|P_*| = \ell(v_*)$ and \eqref{eq:Zpermlength}.  This completes the proof.
\end{proof}

\section{Generalizing the formulas to arbitrary orientation}\label{sect:arbitrary}
\subsection{Reduction to the bipartite setting}
In this section, we recall the connection between type $A$ quiver loci in the arbitrarily oriented and bipartite settings. We use this to prove the main formulas of the paper via straightforward substitutions into our bipartite formulas.  Since we work with more than one quiver and dimension vector, these are reinstated to the notation.

Our notation for a type $A$ quiver of arbitrary orientation $Q$ will be $Q_0=\{1, \dotsc, m\}$ and $Q_1 = \{\zg_1, \dotsc, \zg_{m-1}\}$, with the arrow $\zg_i$ connecting vertices $i$ and $i+1$.  A vertex $i$ is to the \emph{left} of a vertex $j$ if $i > j$, and similarly with arrows.
Given such a quiver, we obtain an associated bipartite quiver $\tilde{Q}$ by inserting a ``backwards'' arrow  in the middle of each path of length 2.
Rigorously, for each length two path in $Q$ we double the vertex in the middle and create a new arrow by defining sets of \emph{added vertices and arrows}
\[
Q'_0 = \setst{i'}{h\zg_i = t\zg_{i-1} \text{ or } t\zg_i = h\zg_{i-1}} \quad Q'_1 = \setst{\zg'_i}{h\zg_i = t\zg_{i-1} \text{ or } t\zg_i = h\zg_{i-1}}.
\]
We set $\tq_0 = Q_0 \coprod Q'_0$ and $\tq_1 = Q_1\coprod Q'_1$. 
Added arrows will sometimes be called \emph{inverted arrows}, and anything indexed by them (e.g., blocks of the snake region) can be referred to as \emph{inverted}.
The tail and head functions for $\tq$ are illustrated by the diagrams:
\[
i+1 \xrightarrow{\zg_i} i \xrightarrow{\zg_{i-1}}i-1 \text{ in $Q$ yields }\qquad
\vcenter{\hbox{\begin{tikzpicture}[xscale=1.3,point/.style={shape=circle,fill=black,scale=.5pt,outer sep=3pt},>=latex]
   \node[outer sep=-2pt] (1) at (-1,0.5) {$i+1$};
   \node[outer sep=-2pt] (2) at (0,0) {$\color{red}{i'}$};
   \node[outer sep=-2pt] (3) at (1,0.5) {$i$};
  \node[outer sep=-2pt] (4) at (2,0) {$i-1$};
  \path[->]
  	(1) edge node[above,pos=0.6] {$\zg_i$} (2) 
  	(3) edge[thick,dashed,red] node[above,pos=0.5] {$\zg'_i$} (2) 
	(3) edge node[above,pos=0.6] {$\zg_{i-1}$} (4);
 \end{tikzpicture}}}  \text{ in }\tq
\]
\[
i+1 \xleftarrow{\zg_i} i \xleftarrow{\zg_{i-1}}i-1 \text{ in $Q$ yields }\qquad
\vcenter{\hbox{\begin{tikzpicture}[xscale=1.3,point/.style={shape=circle,fill=black,scale=.5pt,outer sep=3pt},>=latex]
   \node[outer sep=-2pt] (1) at (-1,0) {$i+1$};
   \node[outer sep=-2pt] (2) at (0,0.5) {$\color{red}{i'}$};
   \node[outer sep=-2pt] (3) at (1,0) {$i$};
  \node[outer sep=-2pt] (4) at (2,0.5) {$i-1$};
  \path[->]
  	(2) edge node[above,pos=0.5] {$\zg_i$} (1) 
  	(2) edge[thick,dashed,red] node[above,pos=0.5] {$\zg'_i$} (3) 
	(4) edge node[above,pos=0.5] {$\zg_{i-1}$} (3);
 \end{tikzpicture}}}  \text{ in }\tq.
\]
We define $\nu\colon \tq_0 \to Q_0$ by $\nu(i') = \nu(i) = i$ for $i \in Q_0$ and $i' \in Q'_0$.

For example, a type $A$ quiver $Q$ and its associated bipartite quiver $\tilde{Q}$ are below.
\begin{equation}\label{eq:arbitraryq}
Q=\qquad\vcenter{\hbox{\begin{tikzpicture}[yscale=0.6,point/.style={shape=circle,fill=black,scale=.5pt,outer sep=3pt},>=latex]
   \node[outer sep=-2pt] (z5) at (-1,1) {$6$};
   \node[outer sep=-2pt] (z0) at (0,2) {$5$};
   \node[outer sep=-2pt] (z1) at (1,1) {$4$};
  \node[outer sep=-2pt] (z2) at (2,0) {$3$};
   \node[outer sep=-2pt] (z3) at (3,1) {$2$};
  \node[outer sep=-2pt] (z4) at (4,2) {$1$};
  \path[->]
  	(z1) edge node[below,pos=0.3] {$\zg_3$} (z2) 
	(z3) edge node[below,pos=0.3] {$\zg_2$} (z2)
  	(z0) edge node[below,pos=0.3] {$\zg_4$} (z1) 
	(z4) edge node[below,pos=0.3] {$\zg_1$} (z3)
	(z0) edge node[below,pos=0.3] {$\zg_5$} (z5);
   \end{tikzpicture}}}
\end{equation}

\begin{equation}\label{eq:qcover}
\tq=\vcenter{\hbox{\begin{tikzpicture}[xscale=1.2,point/.style={shape=circle,fill=black,scale=.5pt,outer sep=3pt},>=latex]
   \node[outer sep=-2pt] (z5) at (-2,0) {$6$};
   \node[outer sep=-2pt] (z0) at (-1,1) {$5$};
   \node[outer sep=-2pt] (w1) at (0,0) {$\color{red}{4'}$};
   \node[outer sep=-2pt] (z1) at (1,1) {$4$};
  \node[outer sep=-2pt] (z2) at (2,0) {$3$};
   \node[outer sep=-2pt] (w3) at (3,1) {$\color{red}{2'}$};
  \node[outer sep=-2pt] (z3) at (4,0) {$2$};
  \node[outer sep=-2pt] (z4) at (5,1) {$1$};
  \path[->]
	(z0) edge node[left,pos=0.2] {$\zg_5$} (z5)
  	(z0) edge node[left,pos=0.7] {$\zg_4$} (w1) 
  	(z1) edge[thick,dashed,red] node[above,pos=0.6] {$\zg'_4$} (w1) 
	(z1) edge node[left,pos=0.7] {$\zg_3$} (z2)
  	(w3) edge node[left,pos=0.2] {$\zg_2$} (z2) 
	(w3) edge[thick, dashed,red] node[left,pos=0.7] {$\zg'_2$} (z3)
	(z4) edge node[left,pos=0.2] {$\zg_1$} (z3);
   \end{tikzpicture}}}\qedhere
\end{equation}

Given a dimension vector $\bd$ for $Q$, define a dimension vector $\td$ for $\tilde{Q}$ as the natural lifting $\td(z) := \bd(\nu(z))$ for $z \in \tq_0$, noting that $\td(ta) = \td(ha)$ when $a\in Q'_1$ is an added arrow. 
Consider the closed embedding $\sigma:\rep_Q(\bd)\rightarrow \rep_{\tilde{Q}}(\tilde{\bd})$ defined by
\begin{equation}
\sigma\left((V_{\gamma_k})_{1\leq k\leq m-1}\right) = (W_a)_{a \in \tq_1}, \quad \text{where} \quad
W_a = 
\begin{cases}
V_{\zg_k} & a=\gamma_k \in Q_1\\
\mathbf{1}_{\td(ta)} & a \in Q'_1.
\end{cases}
\end{equation}
As usual, $\mathbf{1}_k$ denotes a $k\times k$ identity matrix. 
Note that $\sigma$ is equivariant with respect to the natural inclusion $\GL(\bd) \into \GL(\tilde{\bd})$ defined by
\begin{equation}\label{eq:equivariance}
(g_i)_{i \in Q_0}\mapsto (h_z)_{z \in \tq_0}, \qquad
h_z = g_{\nu(z)}.
\end{equation} 
If $\Omega\subseteq \rep_Q(\bd)$ is an orbit closure, let $\tilde{\Omega}\subseteq \rep_{\tilde{Q}}(\tilde{\bd})$ be the orbit closure $\overline{\GL(\tilde{\bd})\cdot \sigma(\Omega)}$\footnote{Throughout this section, $\tilde{\Omega}$ denotes a \emph{single, lifted} orbit in the \emph{single, lifted} space $\widetilde{\rep}$, not a family of orbits in a family of spaces as in \S \ref{sect:degenAndComponent}.}.  
By \cite[Thm.~5.3]{KR}, the ring map $\sigma^{\sharp}: K[\rep_{\tilde{Q}}(\tilde{\bd})]\rightarrow K[\rep_Q(\bd)]$ induces an isomorphism of the coordinate rings $K[\zO] \simeq K[\tO]/\text{ker}(\sigma^{\sharp})$, and $\codim \zO = \codim \tilde{\zO}$. Note that the codimension on the left side (resp. right side) of the equality is in $\rep_Q(\bd)$ (resp. $\rep_{\tilde{Q}}(\tilde{\bd})$).

We now establish notation for the multigrading of $K[\mathtt{rep}_Q(\bd)]$ defined in \S\ref{sect:multigrading}.  
For $i \in Q_0$, define an alphabet $\bu^i = u^i_1, \dotsc, u^i_{\bd(i)}$, and denote the concatenation of these alphabets by $\bu = \bu^1, \dotsc, \bu^m$.  For left pointing $k+1 \xleftarrow{\zg_k} k$, the coordinate function picking out the $(i,j)$-entry of the matrix $V_{\zg_k}$ is given degree $u_i^k - u_j^{k+1}$, and for right pointing $k+1 \xrightarrow{\zg_k} k$, the coordinate function picking out the $(i,j)$-entry of the matrix $V_{\zg_k}$ is given degree $u_i^{k+1} - u_j^k$.

We fix an identification of $\tq$ with a bipartite quiver whose vertices and arrows are labeled as in the previous sections of the paper.
Let $\tilde{d}$ be the total dimension of $\tilde{\bd}$ and $d$ the total dimension of $\bd$.  As before, we can identify the group which grades $K[\mathtt{rep}_{\tilde{Q}}(\tilde{\bd})]$ with the free abelian group on the concatenated alphabet $\bs, \bt$, and this group is isomorphic to $\ZZ^{\tilde{d}}$.  Similarly, the group which grades $K[\mathtt{rep}_Q(\bd)]$ is identified with the free abelian group on $\bu$, and is isomorphic to $\ZZ^d$.
Our identification of $\tq$ with a bipartite quiver having vertices labeled as in previous sections allows us to use the notation $\nu(x_i)$ and $\nu(y_i)$ to specify vertices of $Q$, and define a quotient morphism of abelian groups $sub: \mathbb{Z}^{\tilde{d}}\rightarrow \mathbb{Z}^{d}$ by 
\begin{equation}\label{eq:subdef}
s^i_j \mapsto u^{\nu(x_i)}_j, \qquad t^i_j \mapsto u^{\nu(y_i)}_j.
\end{equation}
The map $sub$ defines a $\mathbb{Z}^d$-grading of $K[\mathtt{rep}_{\tilde{Q}}(\tilde{\bd})]$, which is a coarsening of our usual $\mathbb{Z}^{\tilde{d}}$-grading.
In Figure \ref{fig:zsimage} we have labeled the block rows and columns of the image of $\zeta\circ \sigma$ to show how $sub$ interacts with the Zelevinsky map.
\begin{figure}
\[\zeta(\sigma(V)) =
\vcenter{\hbox{
\begin{tikzpicture}[every node/.style={minimum width=1em}]
\matrix (m0) [matrix of math nodes,left delimiter  = {[},%
             right delimiter = {]}, nodes in empty cells] at (0,0)
{
0 &0& 0 & V_{\zg_1}& \bid_{\bd(1)} &  & \\
0 & 0 &  V_{\zg_2} & \bid_{\bd(2)} & & \bid_{\bd(2)} & \\
0 & \bid_{\bd(4)} & V_{\zg_3} & 0 & & & \bid_{\bd(4)}\\
V_{\zg_5} & V_{\zg_4} & 0 & 0 & & & & \bid_{\bd(5)} \\
\bid_{\bd(6)} &  & \\
& \bid_{\bd(4)} &  & \\
&& \bid_{\bd(3)} &  & \\
&& &\bid_{\bd(2)} \\
};
\node  at (0.5,3) {$\bu^1$};
\node  at (1.7,3) {$\bu^2$};
\node  at (2.9,3) {$\bu^4$};
\node  at (4.1,3) {$\bu^5$};
\node  at (-0.6,3) {$\bu^2$};
\node  at (-1.6,3) {$\bu^3$};
\node  at (-2.6,3) {$\bu^4$};
\node  at (-3.6,3) {$\bu^6$};
\node  at (-5,2.4) {$\bu^1$};
\node  at (-5,1.8) {$\bu^2$};
\node  at (-5,1.1) {$\bu^4$};
\node  at (-5,0.4) {$\bu^5$};
\node  at (-5,-0.3) {$\bu^6$};
\node  at (-5,-1) {$\bu^4$};
\node  at (-5,-1.6) {$\bu^3$};
\node  at (-5,-2.3) {$\bu^2$};
\node[scale=3] (zero) at (m0-6-3 -| m0-2-6.south east)  {$0$};
\draw[thick] (m0-5-1.north west) -- (m0-5-1.north west -| m0-4-8.south east);
\draw[thick] (m0-1-5.north west) -- (m0-1-5.north west |- m0-8-3.south east);
\end{tikzpicture}}}
\]
\caption{The image of $\zeta \circ \sigma$ for the example \eqref{eq:arbitraryq}.}\label{fig:zsimage}
\end{figure}

The above definitions are made so that the 
map on coordinate rings
\begin{equation}\label{eq:sigmasharp}
\sigma^\# \colon K[\mathtt{rep}_{\tilde{Q}}(\tilde{\bd})] \to K[\mathtt{rep}_Q(\bd)]
\end{equation}
is compatible with $sub$, that is, the degree of $\sigma^\#(f)$ is $sub$ of the degree of $f$ for any polynomial $f$ which is homogeneous with respect to our $\mathbb{Z}^{\tilde{d}}$-grading of $K[\mathtt{rep}_{\tilde{Q}}(\tilde{\bd})]$. This is a consequence of the $\GL(\bd)$-equivariance of the map $\sigma$ (see \eqref{eq:equivariance}).

The map $sub$ of abelian groups induces a map from the ring of Laurent polynomials in the concatenated alphabet $\bt, \bs$ to the ring of Laurent polynomials in the alphabet $\bu$. 

Finally, we write $K\cQ_\zO  (\bu)$ (resp., $\cQ_\zO  (\bu)$) for the $K$-polynomial (resp., multidegree) of $\zO$ with respect to its inclusion in $\rep$ and the multigrading described above.

\begin{proposition}\label{prop:alphabetsub}
For any quiver locus $\Omega\subseteq \rep_Q(\bd)$, we have $K\cQ_\zO  (\bu)= sub (K\cQ_{\tilde{\zO}}(\bt / \bs) )$.
\end{proposition}

\begin{proof}
Taking a $\mathbb{Z}^{\tilde{d}}$-graded free resolution of $K[\tilde{\Omega}]$ over $K[\mathtt{rep}_{\tilde{Q}}(\tilde{\bd})]$ and reducing modulo $\ker (\sigma^{\#})$ yields a $\mathbb{Z}^d$-graded free resolution of $K[\Omega]$ over $K[\mathtt{rep}_Q(\bd)]$.
The proof of this fact is the same as the equioriented case \cite[Prop.~17.31]{MS}, essentially relying on the fact that type $A$ orbit closures are Cohen-Macaulay \cite{MR1967381,KR}.   Since the $K$-polynomial of $\zO$ can be computed as an alternating sum of the $K$-polynomials of the terms in a finite multigraded free resolution \cite[Def.~8.32]{MS}, this reduces the proposition to the case of free modules, which is exactly the statement that \eqref{eq:sigmasharp} is compatible with $sub$.
\end{proof}

\comment{
Let $\mathcal{F}_\bullet $ be a $\mathbb{Z}^{\tilde{d}}$-graded free resolution of $K[\tilde{\Omega}]$ over $K[\mathtt{rep}_{\tilde{Q}}(\tilde{\bd})]$. Then $\mathcal{F}_\bullet/\ker (\sigma^{\sharp})\mathcal{F}_{\bullet}$ is a complex of $\mathbb{Z}^d$-graded free modules over $K[\mathtt{rep}_{\tilde{Q}}(\tilde{\bd})]/\ker(\sigma^{\sharp})$. 
Indeed, the $\mathbb{Z}^d$-grading of $K[\mathtt{rep}_{\tilde{Q}}(\tilde{\bd})]$, defined via the map $sub$, is a coarsening of the $\mathbb{Z}^{\tilde{d}}$-grading, and this coarsening makes the ideal $\ker(\sigma^\sharp)$ homogeneous since the variables set equal to $1$ have degree $0$.
Furthermore, $\mathcal{F}_\bullet/\ker (\sigma^{\sharp})\mathcal{F}_{\bullet}$ is actually a $\mathbb{Z}^d$-graded free resolution of $K[\tilde{\Omega}]/\ker(\sigma^\sharp)$. The proof of this is the same as in the equioriented case \cite[Prop.~17.31]{MS}, essentially relying on the fact that type $A$ quiver loci are Cohen-Macaulay \cite{MR1967381} (or \cite{KR}). 
By identifying $K[\rep_Q(\bd)]$ with $K[\mathtt{rep}_{\tilde{Q}}(\tilde{\bd})]/\ker(\sigma^{\sharp})$ via $\sigma^\sharp$, we see that $\mathcal{F}_\bullet/\ker (\sigma^{\sharp})\mathcal{F}_{\bullet}$ is a $\mathbb{Z}^d$-graded free resolution of $K[\Omega]$ over $K[\rep_Q(\bd)]$.
}

\begin{remark}
The reader familiar with equivariant $K$-theory may observe that Proposition \ref{prop:alphabetsub} is an algebraic way of stating that the equivariant $K$-class of $\Omega$ is obtained by pulling back the equivariant $K$-class of $\tilde{\Omega}$ along the $\GL(\bd)$-equivariant map $\sigma$.
\end{remark}

\subsection{Formulas in arbitrary orientation}
We now generalize our formulas to arbitrary orientation.  
We only prove the $K$-polynomial versions, the multidegree formulas following from this as usual.

\begin{theorem}[Ratio formula]\label{thm:ratioformula}
Let $Q$ be a type $A$ quiver of arbitrary orientation, $\Omega \subseteq \mathtt{rep}_Q(\bd)$ a quiver locus, and $\tilde{\Omega}\subseteq \rep_{\tilde{Q}}(\tilde{\bd})$ the associated bipartite quiver locus. Then:
\begin{equation}\label{eq:ratioformula}
K\cQ_\zO(\bu) =  \frac{sub(\fG_{v(\tilde{\Omega})}(\bt,\bs; \bs,\bt)) }{sub(\fG_{v_*}(\bt,\bs; \bs,\bt)) } \quad \text{and} \quad
\cQ_\zO(\bu) = \frac{sub(\fS_{v(\tilde{\Omega})}(\bt,\bs; \bs,\bt)) }{sub(\fS_{v_*}(\bt,\bs; \bs,\bt)) } .
\end{equation}
\end{theorem}
\begin{proof}
Applying Proposition \ref{prop:alphabetsub} to 
 \eqref{eq:bipartiteratio} for $\tilde{\zO}$, the only thing we need to check is that the denominator is nonzero.  But $sub(\fG_{v_*}(\bt,\bs; \bs,\bt))=sub\left((1-\bt/\bs)^{P_*}\right)$ as in the proof of Theorem \ref{thm:bipipe}, which could only be zero if $P_*$ has a cross tile in a position with row label $t^i_j$ and column label $s^k_l$ and $sub(t^i_j) = sub(s^k_l)$.  By \eqref{eq:subdef}, the latter could occur only when $\nu(x_i)=\nu(y_i)$,  which would require $P_*$ to have a cross tile in an inverted block, so in particular a block in the snake region.  But $P_*$ has no cross tiles in the snake region.
\end{proof}

Let $\tilde{d}_x =\sum_i \tilde{\bd}(x_i)$ and $\tilde{d}_y =  \sum_i \tilde{\bd}(y_i)$.  Identifying a $\tilde{d}_y \times \tilde{d}_x$ grid with the northwest quadrant of the target of $\zeta\circ \sigma$, we get a labeling of rows and columns of the grid by elements of $\bu$ as in Figure \ref{fig:zsimage}.
To be precise, let $\bu^{\row} = (u^{\row}_i)_{1\leq i\leq d_{y}}$ (resp. $\bu^{\text{col}} = (u^{\text{col}}_i)_{1\leq i\leq d_x}$) denote the list obtained by applying $sub$ to the entries of the alphabet $\bt$ (resp. $\bs$). 
Label the rows of the $\tilde{d}_y \times \tilde{d}_x$ grid by $\bu^{\text{row}}$ and columns by $\bu^{\text{col}}$, so that each pipe dream $P$ on this grid determines a term $(\one - \bu^{\text{row}} / \bu^{\text{col}})^P$ as in \S\ref{sect:pipeDreams}, which is clearly equal to $sub\left( (\one - \bt/\bs)^P \right)$.
Define a partition $\Pipes(v_0,v(\tilde{\zO})) = G \cup B$ into ``good'' and ``bad'' pipe dreams where $G$ consists of pipe dreams $P$ such that all $(i,j) \in P \setminus P_*$ are in blocks of the snake region corresponding to arrows of $Q$, and $B$ consists of those with at least one cross tile in an inverted block.  

\begin{theorem}[Pipe formula]\label{thm:pipeformula}
For a type $A$ quiver $Q$ of arbitrary orientation, the $K$-polynomial of a quiver locus $\zO \subseteq \mathtt{rep}_Q(\bd)$ is given by
\begin{equation}\label{eq:pipeformula}
K\cQ_\zO(\bu) = \sum_{P \in G} (-1)^{|P\backslash P_*|-\codim \zO}\ (\one - \bu^{\row} / \bu^{\col})^{P \setminus P_*}.
\end{equation}
The formula for $\cQ_\zO(\bu)$ is obtained by replacing $(\one - \bu^{\row} / \bu^{\col})$ with $(\bu^{\row} - \bu^{\col})$, with the sum taken over reduced pipe dreams in $G$.
\end{theorem}
\begin{proof}
Applying Proposition \ref{prop:alphabetsub} to \eqref{eq:Kbipipe} for $\tilde{\zO}$, we need to both simplify the exponent of $-1$ in the formula, and also show that the sum of terms indexed by $B$ is zero after applying $sub$.
The simplification of the exponent follows from the equality $\codim \widetilde{\zO}=\codim \zO$. 

Fix consideration of a single inverted block, say corresponding to the arrow $\beta_r$ of $\tq$ (the case of $\za_r$ is similar).  Define an equivalence relation $\sim$ on $B$ by declaring $P_1 \sim P_2$ exactly when $P_1$ and $P_2$ have the same elements $(i,j)$ outside of the $\beta_r$ block.  This gives a partition $B =\coprod_k B_k$, where each $B_k$ is an equivalence class.  Further partition each $B_k$ into subsets $B_k(w)$ indexed by permutations $w\neq id$, such that $P \in B_k(w)$ if and only if the Demazure product of the restriction of $P$ to the designated block is $w$.  Then by \cite[Thm.~2.1]{BRspec} the sum of terms in \eqref{eq:Kbipipe} over $P\in B_k(w)$ has the factor $\fG_w(\bt^r; \bs^r)$, where $\bt^r, \bs^r$ are the subsets of $\bt, \bs$ as in \S \ref{sect:multigrading}.  Applying $sub$ to this and noting that $\nu(y_r)=\nu(x_r)$ since $\zb_r$ is inverted yields
\begin{equation}\label{eq:BRcor}
\fG_w(\bu^{\nu(y_r)}; \bu^{\nu(x_r)}) = 
\begin{cases}
1 & \text{if}\ w=id,\\
0 & \text{if}\ w \neq id.
\end{cases}
\end{equation}
by \cite[Cor.~2.4]{BRspec}. Repeating over all inverted blocks completes the proof.
\end{proof}

\begin{remark}
The last statement of the theorem gives a simplification of existing formulas in the literature for the equioriented case.  Namely, the sums of \cite[``Pipe formula'' p. 236]{KMS} for multidegrees and \cite[Thm.~3]{MR2137947} for $K$-polynomials can be taken over fewer pipe dreams (those with no cross tile on the block antidiagonal, in the setup of those papers).
\end{remark}

We finally arrive at the component formula.  To each lacing diagram $\bw=(w_i)_i$ for $Q$, where $w_i$ is associated to $\zg_i \in Q_1$, we define 
\begin{equation}
\fG_\bw(\bu)=\left(\prod_{\xrightarrow{\zg_i}}\fG_{w_i}(\bu^{i+1};\bu^{i})\right)\left(\prod_{\xleftarrow{\zg_i}}\fG_{\rot(w_i)}(\widetilde{\bu^{i}};\widetilde{\bu^{i+1}})\right)
\end{equation}
where the first product is taken over right pointing arrows and the second over left pointing arrows.


\begin{theorem}[Component formula]\label{thm:component}
For a type $A$ quiver $Q$ of arbitrary orientation, the $K$-polynomial of a quiver locus $\zO \subseteq \mathtt{rep}_Q(\bd)$ is given by
\begin{equation}\label{eq:componentformula}
K\cQ_\zO (\bu) = \sum_{\bw \in KW(\zO)} (-1)^{|\bw| - \codim\zO} \fG_\bw(\bu).
\end{equation}
The formula for $\cQ_\zO (\bu)$ is obtained by replacing each $\fG$ with $\fS$ and $KW(\zO)$ with $W(\zO)$.
\end{theorem}

\begin{proof}
Applying Proposition \ref{prop:alphabetsub} to \eqref{eq:bicomponentformula} for $\tilde{\zO}$, we \textit{a priori} have a sum over $KW(\tilde{\zO})$.
Let $\beta_r$ be an inverted arrow of $\tq$ (the case of $\za_r$ is similar) and let $\tilde{\bw} \in KW(\tilde{\zO})$.  Then the factor of $sub(\fG_{\tilde{\bw}}(\bt; \bs))$ corresponding to $\beta_r$ is of the form 
$\fG_w(\bu^{\nu(y_r)}; \bu^{\nu(x_r)})$ for some permutation $w$. Since $\beta_r$ is an inverted arrow, $\nu(y_r) = \nu(x_r)$, and $\fG_w(\bu^{\nu(y_r)}; \bu^{\nu(x_r)})$ is 0 unless $w=1$ as in \eqref{eq:BRcor}. 
Applying this reasoning to all inverted arrows, we see that $sub(\fG_{\tilde{\bw}}(\bt; \bs))=0$ unless $\tilde{\bw}$ has no crossings over inverted arrows.   The set of such $\tilde{\bw}$ is clearly in bijection with $KW(\zO)$ by contracting the lace edges over inverted arrows and identifying the vertices at each end.  Given such $\tilde{\bw}$, denote by $\bw$ the contraction: then \eqref{eq:BRcor} along with the definition \eqref{eq:gwdef} shows that $sub(\fG_{\tilde{\bw}}(\bt; \bs)) = \fG_\bw (\bu)$.
Furthermore, for such $\tilde{\bw}$ we have that $\tilde{\bw}$ is minimal if and only if $\bw$ is, so $|\tilde{\bw}|=|\bw|$ implies that $\codim\tilde{\zO} = \codim \zO$, completing the proof.
\end{proof}

\begin{figure}[h]
\begin{tikzpicture}[point/.style={shape=circle,fill=black,scale=.5pt,outer sep=3pt},>=latex] 
\node[point] (1b) at (0,1) {};
\node[point] (1c) at (0,2) {};
\node[point] (2b) at (1,1) {};
\node[point] (2c) at (1,2) {};
\node[point] (3b) at (1,1) {};
\node[point] (3c) at (1,2) {};
\node[point] (4a) at (2,0) {};
\node[point] (4b) at (2,1) {};
\node[point] (4c) at (2,2) {};
\node[point] (5a) at (3,0) {};
\node[point] (5b) at (3,1) {};
\node[point] (6a) at (3,0) {};
\node[point] (6b) at (3,1) {};
\node[point] (7a) at (4,0) {};
  
\path[->,thick]
   (1c) edge (2c)
   (3b) edge (4b)
   (3c) edge (4c)
   (5a) edge (4c)
   (5b) edge (4b)
   (7a) edge (6a);
  
  \end{tikzpicture}
\qquad
\begin{tikzpicture}[point/.style={shape=circle,fill=black,scale=.5pt,outer sep=3pt},epoint/.style={shape=rectangle,fill=red,scale=.5pt,outer sep=3pt},>=latex] 
\node[epoint] (1a) at (0,0) {};
\node[point] (1b) at (0,1) {};
\node[point] (1c) at (0,2) {};
\node[epoint] (2a) at (1,0) {};
\node[point] (2b) at (1,1) {};
\node[point] (2c) at (1,2) {};
\node[epoint] (3a) at (1,0) {};
\node[point] (3b) at (1,1) {};
\node[point] (3c) at (1,2) {};
\node[point] (4a) at (2,0) {};
\node[point] (4b) at (2,1) {};
\node[point] (4c) at (2,2) {};
\node[point] (5a) at (3,0) {};
\node[point] (5b) at (3,1) {};
\node[epoint] (5c) at (3,2) {};
\node[point] (6a) at (3,0) {};
\node[point] (6b) at (3,1) {};
\node[point] (7a) at (4,0) {};
\node[epoint] (7b) at (4,1) {};
  
\path[->,thick]
   (1b) edge[red,dashed] (2a)
   (1c) edge (2c)
   (1a) edge[red,dashed] (2b)
   (3a) edge[red,dashed] (4a)
   (3b) edge (4b)
   (3c) edge (4c)
   (5a) edge (4c)
   (5b) edge (4b)
   (5c) edge[red,dashed] (4a)
   (7b) edge[red,dashed] (6b)
   (7a) edge (6a);
  \end{tikzpicture}
    \caption{Minimal lacing diagram for $Q$ from contracting inverted arrows}
\label{fig:laces2}
\end{figure}

Continuing our running example, we take dimension vector $\bd = (\bd(6),\dotsc,\bd(1))= (0,2,2,3,2,1)$ for the quiver $Q$ in \eqref{eq:arbitraryq}.  Then, ignoring the vertex where $\bd$ is 0, the associated bipartite quiver $\tq$ is the same as in the running example from previous sections \eqref{eq:bipartiteEx}. The bipartite $\tilde{Q}$-lacing diagram of Figure \ref{fig:laces} has no crossings over inverted arrows, so we may contract them.
In Figure \ref{fig:laces2}, we see the corresponding $Q$-lacing diagram for this orbit and its extension.

\bibliographystyle{alpha}
\bibliography{pipequiver}

\begin{thebibliography}{KMS06}

\bibitem[ADF85]{AdF}
S.~Abeasis and A.~Del~Fra.
\newblock Degenerations for the representations of a quiver of type {${A}_m$}.
\newblock {\em J. Algebra}, 93(2):376--412, 1985.

\bibitem[AJS94]{AJS}
H.~H. Andersen, J.~C. Jantzen, and W.~Soergel.
\newblock Representations of quantum groups at a {$p$}th root of unity and of
  semisimple groups in characteristic {$p$}: independence of {$p$}.
\newblock {\em Ast\'erisque}, (220):321, 1994.

\bibitem[All14]{allman}
Justin Allman.
\newblock Grothendieck classes of quiver cycles as iterated residues.
\newblock {\em Michigan Math. J.}, 63(4):865--888, 2014.

\bibitem[ASS06]{assemetal}
Ibrahim Assem, Daniel Simson, and Andrzej Skowro{\'n}ski.
\newblock {\em Elements of the representation theory of associative algebras.
  {V}ol. 1}, volume~65 of {\em London Mathematical Society Student Texts}.
\newblock Cambridge University Press, Cambridge, 2006.
\newblock Techniques of representation theory.

\bibitem[BB93]{BB}
Nantel Bergeron and Sara Billey.
\newblock R{C}-graphs and {S}chubert polynomials.
\newblock {\em Experiment. Math.}, 2(4):257--269, 1993.

\bibitem[BF99]{BFchernclass}
Anders~Skovsted Buch and William Fulton.
\newblock Chern class formulas for quiver varieties.
\newblock {\em Invent. Math.}, 135(3):665--687, 1999.

\bibitem[BFR05]{BFR}
Anders~S. Buch, L{{\'a}}szl{{\'o}}~M. Feh{{\'e}}r, and Rich{{\'a}}rd
  Rim{{\'a}}nyi.
\newblock Positivity of quiver coefficients through {T}hom polynomials.
\newblock {\em Adv. Math.}, 197(1):306--320, 2005.

\bibitem[Bil99]{billey}
Sara~C. Billey.
\newblock Kostant polynomials and the cohomology ring for {$G/B$}.
\newblock {\em Duke Math. J.}, 96(1):205--224, 1999.

\bibitem[Bon96]{MR1402728}
K.~Bongartz.
\newblock On degenerations and extensions of finite-dimensional modules.
\newblock {\em Adv. Math.}, 121(2):245--287, 1996.

\bibitem[Bon98]{Bongartzsurvey}
Klaus Bongartz.
\newblock Some geometric aspects of representation theory.
\newblock In {\em Algebras and modules, {I} ({T}rondheim, 1996)}, volume~23 of
  {\em CMS Conf. Proc.}, pages 1--27. Amer. Math. Soc., Providence, RI, 1998.

\bibitem[BR04]{BRspec}
Anders~S. Buch and Rich{\'a}rd Rim{\'a}nyi.
\newblock Specializations of {G}rothendieck polynomials.
\newblock {\em C. R. Math. Acad. Sci. Paris}, 339(1):1--4, 2004.

\bibitem[BR07]{MR2306279}
Anders~Skovsted Buch and Rich{{\'a}}rd Rim{{\'a}}nyi.
\newblock A formula for non-equioriented quiver orbits of type {$A$}.
\newblock {\em J. Algebraic Geom.}, 16(3):531--546, 2007.

\bibitem[Buc02]{MR1932326}
Anders~Skovsted Buch.
\newblock Grothendieck classes of quiver varieties.
\newblock {\em Duke Math. J.}, 115(1):75--103, 2002.

\bibitem[Buc05]{MR2114821}
Anders~Skovsted Buch.
\newblock Alternating signs of quiver coefficients.
\newblock {\em J. Amer. Math. Soc.}, 18(1):217--237 (electronic), 2005.

\bibitem[Buc08]{MR2492443}
Anders~Skovsted Buch.
\newblock Quiver coefficients of {D}ynkin type.
\newblock {\em Michigan Math. J.}, 57:93--120, 2008.
\newblock Special volume in honor of Melvin Hochster.

\bibitem[BZ02]{MR1967381}
Grzegorz Bobi{\'n}ski and Grzegorz Zwara.
\newblock Schubert varieties and representations of {D}ynkin quivers.
\newblock {\em Colloq. Math.}, 94(2):285--309, 2002.

\bibitem[Eis95]{eisenbud}
David Eisenbud.
\newblock {\em Commutative algebra, with a view toward algebraic geometry},
  volume 150 of {\em Graduate Texts in Mathematics}.
\newblock Springer-Verlag, New York, 1995.

\bibitem[FK94]{MR2307216}
Sergey Fomin and Anatol~N. Kirillov.
\newblock Grothendieck polynomials and the {Y}ang-{B}axter equation.
\newblock In {\em Formal power series and algebraic combinatorics/{S}\'eries
  formelles et combinatoire alg\'ebrique}, pages 183--189. DIMACS, Piscataway,
  NJ, 1994.

\bibitem[FK96]{FK}
Sergey Fomin and Anatol~N. Kirillov.
\newblock The {Y}ang-{B}axter equation, symmetric functions, and {S}chubert
  polynomials.
\newblock In {\em Proceedings of the 5th {C}onference on {F}ormal {P}ower
  {S}eries and {A}lgebraic {C}ombinatorics ({F}lorence, 1993)}, volume 153,
  pages 123--143, 1996.

\bibitem[FL94]{fultonlascoux}
William Fulton and Alain Lascoux.
\newblock A {P}ieri formula in the {G}rothendieck ring of a flag bundle.
\newblock {\em Duke Math. J.}, 76(3):711--729, 1994.

\bibitem[FR02]{FRdegenlocithom}
L{\'a}szl{\'o} Feh{\'e}r and Rich{\'a}rd Rim{\'a}nyi.
\newblock Classes of degeneracy loci for quivers: the {T}hom polynomial point
  of view.
\newblock {\em Duke Math. J.}, 114(2):193--213, 2002.

\bibitem[Ful92]{Fulton}
William Fulton.
\newblock Flags, {S}chubert polynomials, degeneracy loci, and determinantal
  formulas.
\newblock {\em Duke Math. J.}, 65(3):381--420, 1992.

\bibitem[Gra02]{graham}
William Graham.
\newblock Equivariant {$K$}-theory and {S}chubert varieties.
\newblock Preprint, 2002.

\bibitem[HH11]{HH}
J\"urgen Herzog and Takayuki Hibi.
\newblock {\em Monomial ideals}, volume 260 of {\em Graduate Texts in
  Mathematics}.
\newblock Springer-Verlag London, Ltd., London, 2011.

\bibitem[HZ14]{HZsurvey}
B.~Huisgen-Zimmermann.
\newblock Fine and coarse moduli spaces in the representation theory of finite
  dimensional algebras.
\newblock In {\em Expository lectures on representation theory}, volume 607 of
  {\em Contemp. Math.}, pages 1--34. Amer. Math. Soc., Providence, RI, 2014.

\bibitem[Kin18]{KinserICRA}
Ryan Kinser.
\newblock K-polynomials of type a quiver orbit closures and lacing diagrams.
\newblock In {\em Representations of Algebras}, volume 705 of {\em Contemp.
  Math.}, pages 99--114. Amer. Math. Soc., Providence, RI, 2018.

\bibitem[KM04]{MR2047852}
Allen Knutson and Ezra Miller.
\newblock Subword complexes in {C}oxeter groups.
\newblock {\em Adv. Math.}, 184(1):161--176, 2004.

\bibitem[KM05]{KM05}
Allen Knutson and Ezra Miller.
\newblock Gr\"obner geometry of {S}chubert polynomials.
\newblock {\em Ann. of Math. (2)}, 161(3):1245--1318, 2005.

\bibitem[KMS06]{KMS}
Allen Knutson, Ezra Miller, and Mark Shimozono.
\newblock Four positive formulae for type {$A$} quiver polynomials.
\newblock {\em Invent. Math.}, 166(2):229--325, 2006.

\bibitem[Knua]{Knutson:2009aa}
Allen Knutson.
\newblock {F}robenius splitting and {M}\"obius inversion.
\newblock \href{http://arxiv.org/abs/0902.1930v1}{\texttt{arxiv:0902.1930v1}}.

\bibitem[Knub]{Knutson:2009bb}
Allen Knutson.
\newblock {F}robenius splitting, point counting, and degeneration.
\newblock \href{http://arxiv.org/abs/0911.4941}{\texttt{arxiv:0911.4941}}.

\bibitem[KR15]{KR}
Ryan Kinser and Jenna Rajchgot.
\newblock Type {$A$} quiver loci and {S}chubert varieties.
\newblock {\em J. Commut. Algebra}, 7(2):265--301, 2015.

\bibitem[KS11]{KontSoib}
Maxim Kontsevich and Yan Soibelman.
\newblock Cohomological {H}all algebra, exponential {H}odge structures and
  motivic {D}onaldson-{T}homas invariants.
\newblock {\em Commun. Number Theory Phys.}, 5(2):231--352, 2011.

\bibitem[LM98]{LMdegen}
V.~Lakshmibai and Peter Magyar.
\newblock Degeneracy schemes, quiver schemes, and {S}chubert varieties.
\newblock {\em Internat. Math. Res. Notices}, (12):627--640, 1998.

\bibitem[LS82]{LS}
Alain Lascoux and Marcel-Paul Sch{\"u}tzenberger.
\newblock Structure de {H}opf de l'anneau de cohomologie et de l'anneau de
  {G}rothendieck d'une vari\'et\'e de drapeaux.
\newblock {\em C. R. Acad. Sci. Paris S\'er. I Math.}, 295(11):629--633, 1982.

\bibitem[Lus90]{MR1035415}
G.~Lusztig.
\newblock Canonical bases arising from quantized enveloping algebras.
\newblock {\em J. Amer. Math. Soc.}, 3(2):447--498, 1990.

\bibitem[Mil05]{MR2137947}
Ezra Miller.
\newblock Alternating formulas for {$K$}-theoretic quiver polynomials.
\newblock {\em Duke Math. J.}, 128(1):1--17, 2005.

\bibitem[MS05]{MS}
Ezra Miller and Bernd Sturmfels.
\newblock {\em Combinatorial commutative algebra}, volume 227 of {\em Graduate
  Texts in Mathematics}.
\newblock Springer-Verlag, New York, 2005.

\bibitem[Rim]{RimanyiCOHA}
Rich{\'a}rd Rim{\'a}nyi.
\newblock On the cohomological {H}all algebra of {D}ynkin quivers.
\newblock \href{http://arxiv.org/abs/1303.3399}{\texttt{arxiv:1303.3399}}.

\bibitem[Rim14]{MR3239295}
R.~Rim{\'a}nyi.
\newblock Quiver polynomials in iterated residue form.
\newblock {\em J. Algebraic Combin.}, 40(2):527--542, 2014.

\bibitem[Rin90]{Rhallalgebras}
Claus~Michael Ringel.
\newblock Hall algebras and quantum groups.
\newblock {\em Invent. Math.}, 101(3):583--591, 1990.

\bibitem[RZ13]{MR3008913}
Christine Riedtmann and Grzegorz Zwara.
\newblock Orbit closures and rank schemes.
\newblock {\em Comment. Math. Helv.}, 88(1):55--84, 2013.

\bibitem[Sch14]{Schiffler:2014aa}
Ralf Schiffler.
\newblock {\em Quiver representations}.
\newblock CMS Books in Mathematics/Ouvrages de Math\'ematiques de la SMC.
  Springer, Cham, 2014.

\bibitem[Wil06]{willems}
Matthieu Willems.
\newblock {$K$}-th\'eorie \'equivariante des tours de {B}ott. {A}pplication \`a
  la structure multiplicative de la {$K$}-th\'eorie \'equivariante des
  vari\'et\'es de drapeaux.
\newblock {\em Duke Math. J.}, 132(2):271--309, 2006.

\bibitem[WY08]{MR2422304}
Alexander Woo and Alexander Yong.
\newblock Governing singularities of {S}chubert varieties.
\newblock {\em J. Algebra}, 320(2):495--520, 2008.

\bibitem[WY12]{wooyong}
Alexander Woo and Alexander Yong.
\newblock A {G}r\"obner basis for {K}azhdan-{L}usztig ideals.
\newblock {\em Amer. J. Math.}, 134(4):1089--1137, 2012.

\bibitem[Yon05]{yongComponent}
Alexander Yong.
\newblock On combinatorics of quiver component formulas.
\newblock {\em J. Algebraic Combin.}, 21(3):351--371, 2005.

\bibitem[Zel85]{Zgradednilp}
A.~V. Zelevinsky.
\newblock Two remarks on graded nilpotent classes.
\newblock {\em Uspekhi Mat. Nauk}, 40(1(241)):199--200, 1985.

\bibitem[Zwa11]{Zwarasurvey}
Grzegorz Zwara.
\newblock Singularities of orbit closures in module varieties.
\newblock In {\em Representations of algebras and related topics}, EMS Ser.
  Congr. Rep., pages 661--725. Eur. Math. Soc., Z\"urich, 2011.

\end{thebibliography}

\end{document}